\newcommand{\revised}[1]{\textcolor{black}{{#1}}}
\newcommand{\revision}[1]{\textcolor{black}{{#1}}}
\theoremstyle{definition}
\newtheorem{definition}{Definition}[section]
\newtheorem{remark}{Remark}
\newtheorem{lemma}{Lemma}
\newtheorem{proposition}{Proposition}
\newtheorem{theorem}{Theorem}
\newcommand{\tens}[1]{\bm{\mathcal{#1}}}
\newcommand{\mat}[1]{\bm{#1}}
\def\tC{{\tens{C}}}
\def\tQ{{\tens{Q}}}
\def\tR{{\tens{R}}}
\def\tU{{\tens{U}}}
\def\tX{{\tens{X}}}  
\def\tY{{\tens{Y}}}
\def\tZ{{\tens{Z}}}
\def\va{{\bm{a}}}
\def\vb{{\bm{b}}}
\def\vi{{\bm{i}}}
\def\vr{{\bm{r}}}
\def\vs{{\bm{s}}}
\def\vu{{\bm{u}}}
\def\vv{{\bm{v}}}
\def\vx{{\bm{x}}}
\def\vy{{\bm{y}}}
\def\vz{{\bm{z}}}
\newcommand{\kron}{\otimes}
\newcommand{\out}{~\raisebox{-1pt}{\tikz \draw[line width=0.5pt] circle(3.9pt);}~}
\newcommand{\algoname}[1]{\textnormal{\textsc{#1}}}
\newcommand{\R}{\mathbb{R}}
\newcommand{\N}{\mathbb{N}}
\newcommand{\E}{\mathbb{E}}
\title{\vspace{-10mm}{On trimming tensor-structured measurements and \\ efficient low-rank tensor recovery 
}}
\author{Shambhavi Suryanarayanan, Elizaveta Rebrova}
\begin{document}

\begin{abstract}
In this paper, we take a step towards developing efficient hard thresholding methods for low-rank tensor recovery from linear measurements with tensorial structure. Theoretical guarantees for many standard iterative low-rank recovery methods, such as iterative hard thresholding (IHT), are based on model assumptions on the measurement operator, like the restricted isometry property (RIP). However, tensor-structured random linear maps -- while memory-efficient and convenient to apply -- lack good restricted isometry properties; that is, they do not preserve the norms of low-rank tensors sufficiently well. To address this, we propose local trimming techniques that provably restore point-wise geometry-preservation properties of tensor-structured maps, making them comparable to those of unstructured linear measurements. 

Then, we propose two novel versions of tensor IHT algorithms: an adaptive gradient trimming algorithm and a randomized Kaczmarz-based IHT algorithm that efficiently recover low-rank tensors from linear measurements. We provide initial theoretical guarantees for the proposed methods and present numerical experiments on real and synthetic data, highlighting their efficiency over the original TIHT for low HOSVD- and CP-rank tensors.

\smallskip
\noindent \textbf{Keywords.} Low-rank recovery, tensor-structured data, memory-efficient linear measurements

\smallskip
\noindent \textbf{MSC codes.}
97N40, 15A69, 15A83, 15B52
\end{abstract}
\maketitle

\section{Introduction}

Tensors, as multi-modal arrays\revision{,} are natural choices for analyzing realistic, high-dimensional structures in \revision{various} application areas. They have been widely used in recent years for modeling objects in signal processing, medical imaging, machine learning, and other domains \cite{sidiropoulos2017tensor,cichocki2015tensor,zhou2016linked, hunyadi2017tensor, guo2016support}. The ubiquity of their applications has motivated the development of specialized techniques to efficiently process large-scale multi-modal data \cite{liu2012tensor,zhang2016exact,papalexakis2016tensors}. Due to the large scale of many applications, it is not surprising that the cornerstone techniques in this suite are related to compression and subsequent recovery of tensorial data \cite{ballester2019tthresh, austin2016parallel}. 

Data-oblivious random sketching is a \revision{fundamental} and powerful linear approach for taming large-scale data. For a given large instance of the data $\tX \in \R^{N}$, the goal is to replace $\tX$ with $\mathcal{A}\tX$ where $\mathcal{A} \in \R^{m \times N}$ is a random matrix with $m \ll N$. Unlike data-aware low-parametric fitting (such as low-rank fitting)\revision{,} which aims to find the best possible approximation for \revision{a single} large and high-dimensional object, selecting a random operator $\mathcal{A}$ \revision{enables us to obtain} uniform guarantees for many such large data instances\revision{,} all while avoiding costly fitting procedures. For example, when it is crucial to guarantee the validity of a data sketch through an iterative process, one requires uniform guarantees over all possible iterates.

A key question in oblivious linear dimensionality reduction is how to design memory-efficient random measurement maps $\mathcal{A}$ that can be applied to the data fast and lead to good compression with high probability \cite{kane2014sparser, dasgupta2010sparse}. Memory efficiency is especially important for tensorial data, since vectorizing a tensor of dimension $n$ in $d$ modes leads to an object in $\R^N$ with $N = n^d$, and sketching matrices have to be correspondingly very large-scale.
Recent work has studied tensor-specific, fast, and memory-efficient sketching operators. Some key examples include Kronecker (or, modewise) measurements, \emph{face-splitting} measurements (that consist of independent scalar Kronecker-structured measurements), and more sophisticated tensor-structured measurements are usually built upon the former (an incomplete list includes \cite{sun2021tensor, ahle2020oblivious, jin2021faster}).
In addition to requiring fewer random bits to store structured matrices, these strategies allow fast application to tensor data by mimicking its structure.

The quality of data-oblivious dimension reduction depends on the geometry-preserving properties of the measurement matrices\revision{,} which are often chosen at random. When a linear map satisfies Johnson-Lindenstrauss-type properties, it can be used to reduce dimensionality and speed up a variety of tasks such as clustering and regression \cite{ailon2009fast, cohen2015dimensionality, wang2018sketched}. When the goal is the subsequent recovery of the data, the map is usually required to satisfy the Restricted Isometry Property (RIP, see Definition~\ref{def:rip} below), which is a uniform guarantee for approximate norm preservation over infinite data sets, such as sparse vectors or low-rank matrices \cite{candes2008restricted}. \revised{For example, RIP is required for successful convergence on popular iterative recovery algorithms such as Iterative Hard Thresholding (IHT, \cite{blumensath2009iterative}), among others \cite{foucart2011hard, needell2010signal, needell2009cosamp}.}

\revised{Here, we are interested in recovery of low-rank tensors under the standard notions of tensor rank, specifically, CP and HOSVD, from data-oblivious linear measurements.} While RIP properties of generic i.i.d. matrices extend to the low-rank tensor case for various tensor ranks (e.g., \cite{rauhut2017low, grotheer2023iterative}), this is not the situation for memory-efficient measurements, such as \revised{Kronecker or face-splitting.} 
So, typically, memory-efficient measurements are used within tasks and algorithms that do not require uniform control over the iterates (and where weaker guarantees for the norm preservation suffice). \revised{This includes multiple works on the recovery of low-HOSVD rank tensors from streaming and memory-efficient measurements, e.g.,  \cite{iwen2021lower,haselby2025fast,sun2021tensor, che2025efficient}. These works rely heavily \revision{on} the structure of HOSVD decomposition (see \eqref{eq:hosvd} below) and show that the recovery is possible with high probability for a fixed (sampled) measurement matrix. In contrast, the uniform recovery guarantees -- such as those based on TensorRIP \revised{(see \cref{def:tensorrip} below)} -- allow \revision{for} provably \revision{reusing} the same measurement matrices for all low-rank tensors of interest.} A  proposed workaround to get TensorRIP  for Kronecker measurements was based on a partial reshaping of the tensor by flattening several modes together \cite{haselby2023modewise}. Such a reshaping allowed the  Tensor IHT algorithm to successfully recover tensors from memory-efficient measurements. However, the reshaping approach fails to give nontrivial results in an important case of a $3$-mode tensor, as the only option is to flatten all three modes together\revision{,} thus rendering the \revision{measurement no longer memory efficient}. \emph{An important motivation for the current work is to give the first nontrivial IHT-like algorithm for 
$3$-d tensor low-rank recovery from memory-efficient measurements.} 

In this work, we focus on the following questions: \emph{(1) To what extent are the geometry preservation properties of tensorial memory-efficient measurements weaker than \revision{those} of the standard sub-Gaussian measurement matrices? (2) How can we efficiently run data-oblivious recovery procedures similar to the IHT algorithm for recovery from tensorial measurements?} 

Our main contributions are two-fold:
\begin{itemize}
    \item \emph{RIP-like properties of  face-splitting measurements under trimming.} We study the limitations of memory-efficient face-splitting measurements as restricted isometry maps and propose adaptive trimming-based local corrections that improve the geometry preservation properties of such operators.
    \item \emph{Efficient iterative hard thresholding variants for low-rank tensor recovery from generic linear measurements.} Motivated by this approach, we propose two new versions of the iterative hard thresholding algorithm for low-rank tensor recovery, TrimTIHT and KaczTIHT, that empirically demonstrate the improved performance for  i.i.d. measurements, especially in the hard case of tensor-structured memory-efficient measurements. We also provide convergence theory for the proposed methods, giving them initial theoretical validation. 
\end{itemize}

\subsection{Paper structure and roadmap.} This paper has the following structure. In Section~\ref{sec:prelim}, we review tensor prerequisites and definitions, tensor low-rank recovery problems\revision{,} and their challenges. In Section~\ref{sec:facespkit}, we focus on memory-efficient tensorial measurements with the face-splitting structure, proving the following

\begin{theorem}[Theorem~\ref{theorem:trim_trip} and Proposition~\ref{prop:db_friendly_trip}, informal version] \label{thm:main-informal}
Let $\mat{A} \in \R^{m \times n^d}$ be a matrix with the rows given by Kronecker products of $d$ random vectors in $\R^n$ with i.i.d. mean zero, variance one, sub-Gaussian entries.
Then, for the set $\mathcal{S}$ of all unit norm tensors of HOSVD rank at most $(r, \ldots, r)$, we have with probability $1 - c/n$, 
 \begin{equation}\label{eq:trim-rip}
     \sup_{\tY \in \mathcal{S}}
    \left  \vert \|\mat{A}^\vy\vy\|_2^2 - 1 \right \vert \leq \delta\;,
 \end{equation}
as long as $m \gtrsim \delta^{-2}d (r^d + dnr){\ln (n/\delta)}$\footnote{For comparison, i.i.d. sub-Gaussian measurements are known to satisfy \eqref{eq:trim-rip} -- without any trimming --  for $m$ of the same order up to the logarithms and an additional $d$-factor, see further discussion in Section~\ref{sec:prelim}.}. Here, $\mat{A}^\vy$ is a version of $\mat{A}$ without $\ln(8n)$ rows that are most aligned with $\vy = \algoname{Vec}(\tY)$. If $\mathcal{S}$ denotes the set of all unit norm tensors of CP-rank at most $r$, the same holds for $m \gtrsim \delta^{-2} d^2nr{\ln (n/\delta)}$. However, if we do \emph{not} remove any rows, that is, we try to get a standard RIP guarantee for the set low-rank tensors, then we fail essentially with any nontrivial amount of compression: for any $\alpha > 0$, the probability of \eqref{eq:trim-rip} with $\mat{A}^\vy = \mat{A}$ goes to zero for any $m < n^{d(1 - \alpha)}$. 
\end{theorem}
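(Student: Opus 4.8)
The plan is to treat the three assertions with two separate mechanisms: a single trimming-plus-covering machinery for the two positive bounds, and an adversarial alignment for the negative claim. Throughout, fix a unit tensor $\tY\in\mathcal{S}$ with $\vy=\algoname{Vec}(\tY)$ and write the $k$-th row of $\mat{A}$ as the Kronecker product $\va^{(k)}=\va_1^{(k)}\otimes\cdots\otimes\va_d^{(k)}$, so that $\|\mat{A}\vy\|_2^2=\tfrac1m\sum_{k=1}^m Z_k(\vy)$ with $Z_k(\vy):=\langle\va^{(k)},\vy\rangle^2$ and $\E Z_k=\|\vy\|_2^2=1$. For rank-one $\vy=\vu_1\otimes\cdots\otimes\vu_d$ one has $\langle\va^{(k)},\vy\rangle=\prod_{i=1}^d\langle\va_i^{(k)},\vu_i\rangle$, a product of $d$ independent mean-zero, variance-one sub-Gaussian scalars; hence $Z_k$ is \emph{sub-Weibull of order $2/d$}, with $\Pr[Z_k>t]\le 2\exp(-c\,t^{1/d})$, and by Orlicz-norm subadditivity the same tail order survives for all low-rank $\vy$. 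This heavy tail is exactly the obstruction to a naive RIP, and trimming is engineered to excise it. The key structural observation, writing $L=\ln(8n^\phi)$, is that deleting the $L$ largest summands leaves the smallest possible partial sum, so
\[
f(\vy):=\|\mat{A}^\vy\vy\|_2^2=\frac1m\min_{|T|=L}\sum_{k\notin T}Z_k(\vy).
\]

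First I would establish a pointwise estimate $|f(\vy)-1|\le\delta/2$ for each fixed $\vy$, with failure probability small enough to survive a union bound over a net. I choose a truncation level $\tau$ with $m\,\Pr[Z_1>\tau]\le L/2$; the sub-Weibull tail forces $\tau\asymp(C\log(m/L))^d$, polylogarithmic in $n$. Splitting $Z_k=Z_k\mathbf{1}[Z_k\le\tau]+Z_k\mathbf{1}[Z_k>\tau]$, the exceedance count $N_\tau=\#\{k:Z_k>\tau\}$ is $\mathrm{Binomial}(m,\Pr[Z_1>\tau])$ with mean at most $L/2$, so $N_\tau\le L$ with probability at least $1-n^{-\phi}$ by a Chernoff bound --- this is precisely why we trim $L=\ln(8n^\phi)$ rows. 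On that event the trimmed sum removes \emph{all} exceedances, hence is sandwiched between $\sum_k Z_k\mathbf{1}[Z_k\le\tau]-L\tau$ and $\sum_k Z_k\mathbf{1}[Z_k\le\tau]$. The truncated terms lie in $[0,\tau]$ with variance proxy $\E[Z_1^2]=\prod_i\E[\langle\va_i,\vu_i\rangle^4]\le K^d$, so Bernstein's inequality gives $|\sum_k Z_k\mathbf{1}[Z_k\le\tau]-m|\le\delta m/4$ with probability $1-\exp(-c\delta^2 m/(K^d+\tau\delta))$, while the bias $L\tau\ll\delta m$ is absorbed once $m$ exceeds a polylogarithmic threshold. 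Combining the two directions yields the pointwise bound.

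The principal difficulty is upgrading this to the uniform bound \eqref{eq:trim-rip}, because $\mat{A}^\vy$ depends on $\vy$ through the trimmed index set, so $\vy\mapsto f(\vy)$ is not a fixed quadratic form and the usual net-plus-linearity argument fails directly. Here the representation $f(\vy)=\tfrac1m\min_{|T|=L}\sum_{k\notin T}Z_k(\vy)$ rescues the argument: a minimum of smooth functions is Lipschitz, and on the high-probability event $\{\max_k\|\va^{(k)}\|_2^2\lesssim n^d\}$ each $\tfrac1m\sum_{k\notin T}Z_k$ is Lipschitz on the sphere with constant $\lesssim n^d$, hence so is $f$. I would then take an $\epsilon$-net $\mathcal{N}$ of $\mathcal{S}$ with $\epsilon\asymp\delta n^{-d}$, apply the pointwise bound and a union bound over $\mathcal{N}$, and fill the gaps by Lipschitz continuity. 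The covering numbers are the standard ones for the low-rank manifolds: $\log|\mathcal{N}|\lesssim(r^d+dnr)\log(1/\epsilon)$ for HOSVD rank $(r,\dots,r)$ (a core of size $r^d$ plus $d$ factor matrices of size $n\times r$), and $\log|\mathcal{N}|\lesssim dnr\log(1/\epsilon)$ for CP rank $r$. With $\log(1/\epsilon)\asymp d\log(n/\delta)$ --- the extra factor $d$ being exactly the one manufactured by the $n^d$-Lipschitz constant --- the Bernstein requirement $\delta^2 m\gtrsim\log|\mathcal{N}|+\phi\log n$ becomes $m\gtrsim\delta^{-2}d(r^d+dnr)\log(n/\delta)$ and $m\gtrsim\delta^{-2}d^2nr\log(n/\delta)$, respectively, with absolute constants depending exponentially on $d$ absorbed into $\gtrsim$.

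Finally, the impossibility of untrimmed RIP follows from an adversarial choice of $\tY$. Since $\mathcal{S}$ contains every unit rank-one tensor, I align $\tY$ with a single row: taking $\tY=\bigotimes_{i=1}^d\bigl(\va_i^{(1)}/\|\va_i^{(1)}\|_2\bigr)$ gives $\langle\va^{(1)},\vy\rangle=\prod_{i=1}^d\|\va_i^{(1)}\|_2$, whence
\[
\|\mat{A}\vy\|_2^2\ge\frac1m\prod_{i=1}^d\|\va_i^{(1)}\|_2^2\gtrsim\frac{n^d}{m},
\]
using $\|\va_i^{(1)}\|_2^2\approx n$ with high probability. For $m<n^{d(1-\alpha)}$ this lower bound is $\gtrsim n^{d\alpha}\to\infty$, so $\sup_{\tY\in\mathcal{S}}|\|\mat{A}\vy\|_2^2-1|$ cannot be $\le\delta$, and the probability of \eqref{eq:trim-rip} with $\mat{A}^\vy=\mat{A}$ tends to zero; in fact this shows failure for all $m$ up to order $n^d$. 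The negative conclusion is thus essentially a one-line alignment, and the real work sits in the heavy-tail concentration of the trimmed sum and, above all, in the Lipschitz uniformization that accommodates the $\vy$-dependent trimming, which I expect to be the main obstacle.
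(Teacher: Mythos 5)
Your overall architecture mirrors the paper's: a pointwise trimmed-sum concentration bound, a union bound over an $\varepsilon$-net of the low-rank set, a Lipschitz-type mechanism to absorb the $\vy$-dependence of the trimming, and an alignment argument for the negative claim. Several components are sound. The negative part is correct and in fact simpler than the paper's Proposition~\ref{prop:db_friendly_trip}, which needs \emph{two} test tensors (one aligned with a row, one independent of $\mat{A}$) because it rules out every rescaling $\alpha_n \mat{A}$; for the informal statement, where the normalization is fixed so that $\E\|\mat{A}\vy\|_2^2 = \|\vy\|_2^2$, your single aligned tensor suffices. Your uniformization, writing $f(\vy)=\frac1m\min_{|T|=L}\sum_{k\notin T}Z_k(\vy)$ as a minimum of $O(n^d)$-Lipschitz functions, is a legitimate alternative to the paper's device (stability of order statistics, $|\vu_{(k)}-\vv_{(k)}|\le\|\vu-\vv\|_\infty$, used to compare $\mat{A}^{\vx}\vx$ with $\mat{A}^{\hat\vx}\hat\vx$ in the proof of Theorem~\ref{theorem:trim_trip}); both yield a net resolution of order $\delta n^{-O(d)}$ and hence the same $d\ln(n/\delta)$ factor. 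One small correction: the sub-Weibull tail $\mathbb{P}[Z_k(\vy)>t]\le 2\exp(-ct^{1/d})$ holds for \emph{every} fixed unit $\vy$, low-rank or not, directly from the Khinchine-type moment bound $(\E|\langle\va^{(k)},\vy\rangle|^p)^{1/p}\le (Cp)^{d/2}\|\vy\|_2$; you should not route this through Orlicz-norm subadditivity across the rank decomposition, since for $d\ge 3$ the relevant Orlicz functional is only a quasi-norm and would introduce rank-dependent constants.

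The genuine gap is in the pointwise-to-uniform step for the exceedance event, and as written it is fatal. For a fixed $\vy$ your Chernoff bound gives $\mathbb{P}[N_\tau(\vy)>L]\approx e^{-cL}=(8n^\phi)^{-c}$, and, crucially, this probability \emph{cannot be improved by increasing $m$}: it is capped by the trimming budget $L=\ln(8n^\phi)$. Meanwhile the net satisfies $\ln|\mathcal{N}|\asymp d(r^d+dnr)\ln(n/\delta)$ in the HOSVD case, so the union bound produces $|\mathcal{N}|\cdot e^{-cL}\gg 1$; only your Bernstein term, whose failure probability does decay with $m$, survives the union bound. The upper-bound half of your sandwich genuinely requires $N_\tau(\vy)\le L$ at every net point, so the event cannot be dropped. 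The natural repair within your framework --- raising the truncation level so that $\mathbb{P}[N_\tau>L]\le e^{-c\ln|\mathcal{N}|}$ --- forces $\tau\gtrsim(\ln|\mathcal{N}|/L)^d$, and then the deterministic bias $L\tau/m$ and the Bernstein range inflate $m$ far beyond the claimed $\delta^{-2}d(r^d+dnr)\ln(n/\delta)$. So either the trimming budget must grow to roughly $\ln|\mathcal{N}|$ rows (changing the statement), or one needs a genuinely uniform, chaining-type control of exceedance counts over the low-rank set, which you do not supply. In fairness, this is exactly the step the paper outsources to the cited trimmed-mean result (Theorem~\ref{thm: trim_sub-Gaussian}) followed by ``the union bound argument'' in Lemma~\ref{lem:jl_trimming}, and the same tension is visible there: the per-point confidence $1-\alpha$ is tied to the trimming level $\ln(8/\alpha)$, so a union bound over $p$ points should require trimming $\ln(8p/\alpha)$ rows, not $\ln(8/\alpha)$. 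Your blind reconstruction makes this difficulty explicit rather than resolving it.
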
 

Then, in Section~\ref{sec:iht-algorithms}, we propose new variants of \revision{the} TIHT algorithm that are applicable to generic, and not necessarily face-splitting, linear measurements.
Informally, the original IHT algorithm alternates gradient steps with sparse (or low-rank) fitting. In \cref{alg:trimtiht}, TrimTIHT, 
we replace the gradient step of the TIHT algorithm with a scaled gradient estimate calculated based on a subset of rows of $\mat{A}$, chosen adaptively to the current iterate\revision{,} as to enhance its norm preservation properties. To analyze the convergence of the method, we develop a path-based analysis of the convergence of TIHT, see Theorem~\ref{thm:main_path}. Then, in Remark~\ref{rem:trimtiht-discussion}, we discuss why our trimming results, such as Theorem~\ref{thm:main-informal}, in particular, suggest the advantages of TrimTIHT on the face-splitting measurements.  

The idea of the second proposed algorithm, KaczTIHT (Algorithm~\ref{alg:kziht}), is to substitute the gradient step with a sequence of stochastic gradient steps, thereby employing the convergence properties of the Kaczmarz linear solver \cite{karczmarz1937angenaherte, strohmer2009randomized}. This is a multi-order extension of the recently proposed \cite{zhang2015iterative} and analyzed \cite{jeong2025linear} Kaczmarz iterative hard thresholding method. We also provide a convergence guarantee for the KaczTIHT method, however under the stronger assumptions of i.i.d. sub-Gaussian measurements.

Then, in Section \ref{section:experiments}, we empirically demonstrate the advantages of the proposed methods, which are evident across a range of settings and are especially pronounced in more challenging recovery scenarios—such as when tensors have higher ranks or are compressed using less regular measurements (e.g., memory-efficient face-splitting measurements). We conclude and discuss the future directions of this work in Section~\ref{sec:conclusions}.

\section{Preliminaries and Related Work}\label{sec:prelim}
Throughout this paper, we adopt the following notations: the bold calligraphic font (e.g. $\tX,\tY$) is dedicated solely to tensors, the capital Roman script (e.g. $\mat{X},\mat{Y}$) is for matrices and the bold lower case (e.g $\vx,\vy$) is for vectors. The element of the tensor $\tX$ corresponding to the multi-index $\vi = (i_1, i_2, \dots i_d) \in [n_1] \times [n_2] \dots [n_d]$, is denoted by either $\tX(\bm{i})$ or $\tX_{i_1i_2\dots i_d}$. For two real functions $f,g$, we say that $f = O(g)$ if $|f(x)| \leq M|g(x)|$ for all $x>c$, for some $M$ and $c \in \R^+$.  We will also use $\|\mat{A}\|_2$ to denote the operator norm of a matrix $\mat{A} \in \R^{m \times n}$ that is given by $\sup_{\vv \in \mathbbm{S}^{n-1}} {\|\mat{A}\vv\|_2}$. Here and further, let $\mathbbm{S}^{n-1}$ denote the unit sphere in $\R^n$.

\subsection{Tensor preliminaries}\label{sec:prelims}
For an in-depth overview of tensor-related concepts, we refer to \cite{ballardtensor, kolda2009tensor}. Here, we include and briefly discuss the notions crucial for this paper.

\underline{\emph{Reshaping.}} By collapsing the tensor along all modes, we can map the tensor $\tX~\in~\R^{n_1 \times n_2 \dots \revised{\times} n_d}$ to a vector $\vx \in \R^{n_1 n_2 \dots n_d}$.  
\revised{Specifically, we define a map $\algoname{Vec}: \R^{n_1 \times n_2 \dots \times n_d} \rightarrow \R^{n_1n_2 \dots n_d}$ by
$$   \tX_{i_1, i_2, \dots, i_d} \mapsto \vx_i = \algoname{Vec}(\tX)_i \; \text{ where } i = 1 + \sum_{j=1}^{d} (i_j - 1) \prod_{k = j+1}^d n_k.$$}
 \underline{\emph{Inner Product and Norm.}} The inner product and Frobenius norm of the tensors $\tX, \tY \in \R^{n_1 \times n_2 \dots \times  n_d}$ are defined as the inner product and Euclidean norm of their respective vectorizations:
 \begin{gather*} 
 \langle \tX, \tY \rangle  = \langle \algoname{Vec}(\tX), \algoname{Vec}(\tY)\rangle \text{ and } \quad \|\tX\|_F = \sqrt{\langle \tX, \tX\rangle} = \|\algoname{Vec}(\tX)\|_2\;.
 \end{gather*}

\underline{\emph{Modewise product.}} A key operation on tensors is their modewise multiplication with matrices. Given a tensor $ \tX \in \R^{n_1 \times n_2 \dots \times n_d}$, its $k$-mode tensor product with a matrix $\mat{A} \in \R^{m \times n_k}$, denoted by $\tX \times_{k} \mat{A} \in \R^{n_1 \times \dots n_k \times m \times n_{k+1} \dots n_{d}}$, is given entry-wise by 
\[ (\tX \times_k \mat{A})_{i_1 i_2 \dots i_k \dots i_d} = \sum_{j = 1}^{n_k} \tX_{i_1 \dots i_{k-1} j i_{k+1} \dots i_d} \mat{A}_{i_k j}\;. \]

\underline{\emph{Rank-1 tensor.}} $\tX \in \R^{n_1 \times n_2 \dots \times n_d}$ is said to be a \emph{rank one} tensor if it can be expressed as 
\begin{equation}\label{eq:rank-1-def} \ \  \tX = \vx_1 \out \vx_2 \dots \out\vx_d, \quad \text{ or } \quad (\tX)_{i_1 i_2 \dots i_k \dots i_d} = (\vx_1)_{i_1} (\vx_2)_{i_2} \dots (\vx_d)_{i_d}, \end{equation}
where $\out$ denotes vector outer product and the vectors $\vx_i \in \R^{n_i}$ for $i = 1, \ldots, d$. If $\tX$ is a rank-1 tensor, the modewise product of $\tX$ with any matrix $\mat{A} \in \R^{m \times n_k}$ can be elegantly written in terms of its components $\vx_i$ as-
$$ \tX \times_k \mat{A} = \left(\out_{i=1}^{k-1} \vx_i \right)\out \mat{A} \vx_k \out \left(\out_{i=k+1}^{d} \vx_i\right). $$

\underline{\emph{Tensor Decompositions and Tensor Rank.}} Extending the definition of tensor rank from rank 1 to rank $k$ for $k > 1$ has significant difficulties, resulting in multiple definitions of tensor rank. Two of the most natural notions of tensor rank that are considered in this paper are 
\begin{enumerate}
    \item \textbf{[CP]} A tensor $\tX \in \R^{n_1 \times n_2 \dots n_d}$ is said to be of \emph{{CP rank}}-$r$ if it can be expressed as a sum of $r$ rank-1 tensors. That is, if $\exists \ \vx_{ij} \in \R^{n_i}$ for all $i \in [d]$ and $j \in [r]$ such that
\[ \tX= \sum_{j=1}^{r} \vx_{1j} \out \vx_{2j} \dots \out \vx_{dj}\;. \]
 While this decomposition of a tensor into vectors is very convenient, such \revision{a} minimal $r$ is NP-hard to find or even verify for a given tensor. \revision{Additionally}, one cannot guarantee even approximate orthogonality of the factor vectors. This motivates alternate definitions of the tensor rank.

\item \textbf{[HOSVD]} Given a tensor $\tX\in \R^{n_1 \times \dots \times n_d}$, the \emph{Tucker decomposition} of $\tX$ consists of a core tensor $\tC \in \R^{r_1 \times \dots \times r_d}$ and matrices $\mat{U}_i \in \R^{n_i \times r_i}$ for $i \in [d]$ where
\begin{equation}\label{eq:hosvd} \tX = \tC \times_1 \mat{U}_1 \times_2 \dots \times_d \mat{U}_d\;. \end{equation}
If for all $i \in [d]$, the columns of the matrix $\mat{U}_i$ form an orthonormal basis, this decomposition is known as the \emph{Higher-Order Singular Value Decomposition}. The HOSVD (or Tucker) rank of $\tX$ is given by $\vr = (r_1, r_2, \dots, r_d)$, the dimension of the core tensor $\tC$. Furthermore, the HOSVD decomposition can be chosen such that the core-tensor has orthogonal sub-tensors. More formally, let $\tQ_{k_i=p} \in \R^{n_1 \times n_2 \dots n_{i-1} \times n_{i+1} \dots n_d}$ denote the $(d-1)$-order sub-tensor obtained by fixing the $i$-mode coordinate to $p$ for some $i \in [d]$. Then, for $p \neq q$,
    $$ \langle \tQ_{k_i = p}, \tQ_{k_i = q} \rangle = 0\;.$$
\end{enumerate}
Various domain-specific applications have motivated the definition of different notions of tensor rank that are beyond the scope of this paper, see, e.g., \cite{ballardtensor} \revision{for} further discussion.

 \underline{\emph{Low-Rank Tensor Fitting.}} Unlike the matrix setting, where the Eckart-Young theorem gives us a direct way to obtain the best rank-$k$ approximation using SVD \cite{eckart1936approximation}, such guarantees are not as straightforward for higher-order tensors. In fact, under the CP setting, the best rank $k$-approximation might not exist, and even when it does, \revision{it} is NP-hard to compute \cite{paatero2000construction}. 
 
 In \cite{song2019relative}, the authors proposed randomized algorithms for low-rank fitting that are poly-time in the dimensions of the tensor as follows. For simplicity, let us state the results for the tensors with the same dimension in each mode.
 \begin{theorem}[Low CP Rank Approximation \protect{\cite[Theorem 1.2]{song2019relative}}]
\label{thm:opt_cp_approx}
 Given an order $d$ tensor  $\tX$ in $\R^{ n \times n \dots \times n}$ and $\varepsilon > 0$ , let $$\algoname{Opt} = \underset{\tY - \text{CP rank r}}{\min}\|\tX - \tY\|^2_F.$$ If there exists $\delta > 0$ and
$\tX_r~=~\sum_{j=1}^{r}~\vx_{1j}~\out~\vx_{2j}~\dots~\out~\vx_{dj}$ such that $\|\vx_{ij}\| \leq 2^{O(n^\delta)}$ for all $i \in [d]$ and $j \in [r] $ and $\|\tX - \tX_r\|_F^2 \leq \algoname{Opt} + 2^{-O(n^\delta)}$, then a proposed algorithm outputs a rank $r$-tensor $\tZ$ such that
$$ \|\tZ - \tX\|^2_F \leq (1+\varepsilon)\algoname{Opt} + 2^{-O(n^\delta)}\;.$$
Further, this algorithm runs in $O\left(n^{d+\delta} +n^{\delta+1} \text{poly}(r,1/\varepsilon)+n^{\delta}2^{O\left(\text{poly}(r, 1/\varepsilon)\right)}\right)$ time. 
 \end{theorem}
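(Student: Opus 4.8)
The plan is to reduce the continuous, non-convex low-rank approximation problem to a \emph{small} polynomial system whose size does not grow with $n$, and then to solve that system with a decision procedure for the existential theory of the reals. The starting observation is that $\|\tX - \tY\|_F^2$, viewed as a function of the $dr$ factor vectors $\vx_{ij}$, is a polynomial of constant degree ($2d$) in the factor entries. Running a general real-solver directly on it is hopeless, because the number of variables is $drn$, and such solvers depend exponentially on the variable count. So the first and most important step is dimension reduction: I would replace the ambient dimension $n$ in each mode by a sketch dimension of size $\mathrm{poly}(r,1/\varepsilon)$.

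The structural fact I would exploit is that, once the factors in all modes but one are fixed, the optimal factor matrix in the remaining mode solves an ordinary linear least-squares (multiple-response regression) problem; hence each optimal factor matrix $\mat{U}_i$ lies in a column space determined by the mode-$i$ unfolding of $\tX$ and the other factors. I would then apply oblivious subspace/affine embeddings $\mat{S}_i$ in each mode so that the approximation cost is preserved up to a $(1+\varepsilon)$ factor while the per-mode dimension shrinks to $\mathrm{poly}(r,1/\varepsilon)$. The delicate point here is that the $d$ sketches must compose \emph{simultaneously}: a single subspace embedding per mode does not by itself control the joint multilinear objective, so one needs a regression-type embedding guarantee that is robust to being applied across all modes at once.

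After sketching, a near-optimal solution can be expressed in the sketched coordinates using only $\mathrm{poly}(r,1/\varepsilon)$ free parameters per factor, so the reduced objective becomes a polynomial system in $\mathrm{poly}(r,1/\varepsilon)$ variables, of constant degree, with coefficients that are explicit polynomials in the sketched entries of $\tX$. I would feed this system to a real-solver (e.g.\ Renegar's algorithm) that searches for factors achieving cost at most $(1+\varepsilon)\algoname{Opt}$. The runtime of such a solver is exponential in the variable count but polynomial in the degree, in the number of polynomials, and in the bit length of the coefficients. Since the coefficients carry $O(n^\delta)$ bits, this produces exactly the claimed shape: $n^{d+\delta}$ for forming and sketching $\tX$, a $\mathrm{poly}(r,1/\varepsilon)$ term times $n^{\delta+1}$, and $n^\delta \cdot 2^{O(\mathrm{poly}(r,1/\varepsilon))}$ for the solve.

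The main obstacle, and the reason the guarantee carries the two $2^{\pm O(n^\delta)}$ terms, is bit complexity together with the non-existence of a genuine minimizer. Because the best rank-$r$ CP approximation may fail to be attained and near-optimal factors can be exponentially large or ill-conditioned, I would invoke the hypothesis $\|\vx_{ij}\| \le 2^{O(n^\delta)}$ to confine the search to a box of controlled size, and use the additive slack $2^{-O(n^\delta)}$ to absorb both the rounding of the real-solver and the gap between the infimum and an attained near-minimizer on that box. Arranging the precision so that the solver returns a truly near-optimal $\tZ$, rather than merely certifying feasibility, within the stated time is the technically hardest part of the argument.
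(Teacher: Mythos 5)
This statement is imported by the paper from \cite[Theorem 1.2]{song2019relative} without proof, so there is no in-paper argument to compare against; the relevant comparison is with the proof in that reference. Your sketch faithfully reconstructs its strategy---the observation that fixing all factors but one yields a multiple-response regression, oblivious sketching to shrink each mode to $\mathrm{poly}(r,1/\varepsilon)$ dimensions, reformulation as a constant-degree polynomial system solved by a decision procedure for the existential theory of the reals, and the use of the boundedness hypothesis $\|\vx_{ij}\| \leq 2^{O(n^\delta)}$ together with the additive $2^{-O(n^\delta)}$ slack to handle bit complexity and the possible non-attainment of the CP optimum---so it is essentially the same approach as the cited proof.
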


 \begin{theorem}[Low HOSVD Rank Approximation \protect{\cite[Theorem L.2]{song2019relative}}]
 \label{thm:opt_hosvd_approx}

 Given an order $d$ tensor $\tX \in \R^{n \times n \times \dots \times n}$ and $\varepsilon > 0$, there exists an algorithm that outputs a HOSVD rank $\vr = (r,r, \dots ,r)$ tensor $\tZ$ such that, 
$$ \| \tZ - \tX \|_F^2 \leq (1+\varepsilon) \| \tX - \tX_{best} \|_F^2\;,$$
where $\tX_{best}$ is the best HOSVD-rank $\vr$ approximation of $\tX$. Further, the running time of this algorithm is  $O\left(n^d +n \text{poly}(r, 1/\varepsilon)+2^{O\left(\text{poly}(r, 1/\varepsilon)\right)} \right)$.
 \end{theorem}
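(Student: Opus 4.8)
The plan is to follow the \emph{sketch-and-solve} paradigm for low-rank approximation, adapting the relative-error matrix techniques of Clarkson--Woodruff to the multilinear Tucker/HOSVD setting. First I would reduce the nested minimization over core and factors to one over subspaces only: for any fixed orthonormal factors $\mat{U}_1, \ldots, \mat{U}_d$, the optimal core is the projection $\tC = \tX \times_1 \mat{U}_1^\top \times_2 \cdots \times_d \mat{U}_d^\top$, so by the Pythagorean identity for orthogonal projections
\begin{equation*}
\|\tX - \tC \times_1 \mat{U}_1 \times_2 \cdots \times_d \mat{U}_d\|_F^2 = \|\tX\|_F^2 - \|\tX \times_1 \mat{U}_1^\top \times_2 \cdots \times_d \mat{U}_d^\top\|_F^2 .
\end{equation*}
The task thus becomes finding $d$ good $r$-dimensional subspaces, one per mode, which recovers $\tZ$ once the factors are in hand.

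The key structural step is to show that near-optimal factors need not be sought in all of $\R^n$. I would exhibit sketching matrices $\mat{S}_i \in \R^{n \times s}$ with $s = \text{poly}(r, 1/\varepsilon)$ (e.g. sparse Johnson--Lindenstrauss or CountSketch maps applied along each mode) such that restricting the $i$-th factor to the column space of the mode-$i$ unfolding of the tensor obtained by contracting every other mode $j \neq i$ of $\tX$ with $\mat{S}_j$ preserves the optimum up to a factor $(1+\varepsilon)$. I would establish this one mode at a time: after conditioning on the sketches for the remaining modes, each mode's subproblem is a standard matrix low-rank approximation to which an approximate-matrix-multiplication / affine-embedding guarantee applies, and I would telescope the $d$ resulting relative errors into a single $O(\varepsilon)$ loss.

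Having confined each $\mat{U}_i$ to a known $s$-dimensional space, I would write $\mat{U}_i = \mat{Z}_i \mat{R}_i$ with $\mat{Z}_i$ an explicit orthonormal basis for that space and $\mat{R}_i \in \R^{s \times r}$ an unknown small matrix. Substituting back collapses the objective into a polynomial in the $O(d\,sr) = \text{poly}(r, 1/\varepsilon)$ unknown entries of the $\mat{R}_i$'s and the core, with coefficients read off from the already-sketched data. This small polynomial optimization can be solved to relative accuracy $\varepsilon$ by an algorithm for the existential theory of the reals, whose cost is exponential only in the number of variables, contributing the $2^{O(\text{poly}(r,1/\varepsilon))}$ term; reading the tensor and forming the sketches costs one pass, i.e. $O(n^d)$, and assembling the reduced coefficient matrices costs $O(n\,\text{poly}(r,1/\varepsilon))$, matching the stated runtime.

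The main obstacle I anticipate is precisely the structural sketching lemma. Unlike the matrix case, the $d$ modes are multilinearly coupled, so a sketch applied in one mode alters the very matrix whose column space is being embedded in another. Controlling this requires a \emph{simultaneous} embedding argument rather than $d$ independent ones, with the error from each mode's sketch propagating through the contractions along the other modes. Keeping the telescoped error additive (hence $O(\varepsilon)$ after rescaling) without inflating the sketch dimension $s$ beyond a fixed polynomial in $r$ and $1/\varepsilon$ is the delicate part, and it is what ultimately dictates the polynomial dependence of $s$ on $r$ and $1/\varepsilon$.
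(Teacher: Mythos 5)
The statement you were asked to prove is not actually proven in this paper: it is imported verbatim as a citation (Theorem L.2 of Song--Woodruff--Zhong, \cite{song2019relative}), and serves only as a black-box guarantee that a $(1+\varepsilon)$-accurate low-rank fitting subroutine $T_{\vr}$ exists. So there is no in-paper proof to compare against; the relevant comparison is with the cited source, and your plan does reconstruct its architecture faithfully: the Pythagorean reduction to subspace selection, mode-wise sketching to confine each factor $\mat{U}_i$ to a $\text{poly}(r,1/\varepsilon)$-dimensional space, the substitution $\mat{U}_i = \mat{Z}_i \mat{R}_i$, and the final small polynomial optimization solved by an existential-theory-of-the-reals routine, which is exactly where the $2^{O(\text{poly}(r,1/\varepsilon))}$ term and the $O(n^d) + O(n\,\text{poly}(r,1/\varepsilon))$ terms in the runtime come from.

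That said, as a proof your proposal is an outline, not an argument: the ``key structural step'' -- the simultaneous multilinear affine-embedding lemma showing that sketching the other $d-1$ modes preserves the optimum of each mode's subproblem up to $1+O(\varepsilon)$ -- is precisely the technical heart of the cited result, and you name it as an obstacle rather than resolve it. Telescoping $d$ conditional matrix-embedding guarantees does not go through naively, because (as you note) the matrix being embedded in mode $i$ is itself a function of the sketches in the other modes; the cited work handles this with carefully iterated sketching arguments whose error accounting is the bulk of the proof. Two smaller gaps: a polynomial-system solver decides feasibility, so obtaining a $(1+\varepsilon)$-\emph{optimal} solution requires an additional binary search over the objective value together with bounds on the bit complexity of near-optimal solutions; and the claim that the optimal core is the orthogonal projection requires the factors to be orthonormal, which must be enforced (or argued losslessly) inside the reduced polynomial problem. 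None of these is a wrong turn, but all would need to be filled in for the argument to stand on its own.
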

 These algorithms, while providing good approximation guarantees, \revision{require additional} assumptions for \revision{the} CP rank and are computationally expensive to run. In the standard tensor packages, variants of the alternating least squares algorithm are used to compute a CP rank-$k$ approximation to any given tensor \cite{battaglino2018practical}.  Similarly, for low-rank Tucker approximation, the commonly used algorithms include the Higher Order Orthogonal Iterations (HOOI), HOSVD\revision{,} and ST-HOSVD \cite{de2000multilinear,vannieuwenhoven2012new}. Both HOSVD and ST-HOSVD are $\sqrt{d}$-quasi-optimal. That, is given a tensor $\tX$, the rank-$\vr$ output of these algorithms, $\bar{{\tX}}$ satisfies - 
\begin{equation}\label{eq:fitting} \| \tX - \bar{\tX} \|_F \leq \sqrt{d} \| \tX - \tX_{best} \|_F\;,
\end{equation}
where $\tX_{best}$ is the optimal rank-$\vr$ approximation. The HOOI algorithm often gives better approximation guarantees in practice. However, it can be accompanied by increased computational and storage costs \cite{de2000best} depending on the number of iterations. For the experiments described in this paper, we use CP-ALS and HOOI from the Tensorly package \cite{kossaifi2019tensorly} in Python for low-rank CP and Tucker approximation\revision{,} respectively. 

It is pertinent to note that results such as \eqref{eq:fitting} are worst-case bounds\revision{,} and in practice, these methods provide better approximations for most instances. This gap between theory and observations led authors to assume a better bound on this approximation in some works, e.g., \cite{rauhut2017low} (see more detailed discussion in Section~\ref{sec:iht} below). 
In our theoretical analysis, we will adopt similar assumptions for the low-rank approximation operator.

\subsection{RIP and geometry preserving linear operators}

A standard way to capture the ability of some operator $\mathcal{A}$ \revision{in preserving} geometric properties of the objects from a class is through the restricted isometry property: 
\begin{definition}\label{def:rip}[$(\delta,\mathcal{S})$-Restricted Isometry Property (RIP)] A measurement operator $\mathcal{A}: \mathcal{S} \rightarrow \R^m$ satisfies  \emph{$(\delta,{\mathcal{S})}$}-RIP if
$$ (1 - \delta)\|\vs\|^2 \leq  \|\mathcal{A}(\vs)\|^2 \leq (1+\delta) \|\vs\|^2 \quad \forall \vs \in \mathcal{S}.$$    
\end{definition}
Informally, RIP guarantees that the sketching with $\mathcal{A}$ approximately preserves the norm of all objects within the set of interest $\mathcal{S}$. Depending on the task, $\mathcal{S}$ can be an arbitrary finite set (then Definition~\ref{def:rip} reduces to the classical Johnson-Lindenstrauss property), or an unknown low-dimensional subspace (then it is known as the Oblivious Subspace Embedding property), or any collection of objects of low-complexity, such as, all sparse vectors or all matrices of low-rank $r$. The latter case is the standard interpretation of the RIP property. Here, we are interested in the sets of all low-rank tensors, so we call the corresponding property TensorRIP:
\begin{definition}[TensorRIP]\label{def:tensorrip}
Let \revised{us} denote the sets of low-rank tensors as follows:
\begin{gather*}
\mathcal{S}_\vr^{HOSVD} =\left\{\tX \in \R^{n_1 \times n_2 \dots \times n_d} \ \vert \ \text{HOSVD Rank}(\tX) \revised{\text{ at most }} \vr = (r_1, r_2 \dots r_d), \ \|\tX\|_F = 1 \right\}, \\
\mathcal{S}_r^{CP} =\left\{\tX \in \R^{n_1 \times n_2 \dots n_d} \ \vert \ \text{CP Rank}(\tX) \revised{\text{ at most }} r , \ \|\tX\|_F = 1 \right\}.
\end{gather*}
\revised{We define $(\delta, \mathbf{r})$-TensorRIP as $(\delta, \mathcal{S})$-RIP with $\mathcal{S} = \mathcal{S}_\vr^{HOSVD}$. Similarly, $(\delta, r)$-TensorRIP denotes $(\delta, \mathcal{S}_r^{CP})$-RIP property.} 
\end{definition}
\revised{In this paper, we will focus on linear measurement operators. Given a matrix $\mat{A} \in \R^{m\times n_1n_2\dots n_d}$, we can then define a linear operator, $\mathcal{A}$: $\R^{n_1 \times n_2 \dots \times n_d} \to \R^m$, as} $\mathcal{A}(\tX) = \mat{A} \cdot\algoname{Vec}(\tX)$. \revised{In the following sections, we will interchangeably represent this operator $\mathcal{A}$ acting on the tensors by its linear measurement matrix $\mat{A}$ that acts on the vectorization. In particular, we say that a matrix $\mat{A}$ satisfies TensorRIP if the corresponding linear operator satisfies it.}

A key \revised{ingredient for establishing} RIP  over some set $\mathcal{S}$ is \revision{to bound} the intrinsic complexity of the set, and one of the standard ways to do so is via covering number\revised{s}:
\begin{definition}[$\varepsilon$-net and Covering Number]
\label{def:epsilon-net}
Consider a subset $K$ of a metric space $(T,d)$, and let $\varepsilon > 0$. Then, a set $\mathcal{N} \subseteq K$ is said to be an $\varepsilon$-net of $K$ if for any $\vx \in K$, there exists $\vx' \in \revised{\mathcal{N}}$ such that $d(\vx,\vx') \leq \varepsilon.$
Further, the covering number of $K$, denoted by $\mathcal{N}(K,d,\varepsilon)$, is the smallest possible cardinality of an $\varepsilon$-net of $K$. 
\end{definition}
\begin{remark}[Covering numbers for low-rank tensors]
\label{lem:covering_numbers_tensors} The following upper bounds for covering numbers for the sets ${\mathcal{S}}_\vr^{HOSVD}$ and ${\mathcal{S}}_r^{CP}$ are known. For any $\varepsilon \in (0,1)$  are given by,
\[   \ln \mathcal{N}({\mathcal{S}}_\vr^{HOSVD}, \|.\|_F, \varepsilon) \leq  ({r_1 r_2\dots r_d + \sum_{i=1}^d n_i r_i}) \ln \left(\frac{3(d+1)}{\varepsilon}\right) \text{\cite{rauhut2017low}},\]
\[ \ln \mathcal{N}({\mathcal{S}}_r^{CP}, \|.\|_F, \varepsilon) \leq C\left(r\sum_{i=1}^d n_i \ln\left(\frac{n_1 n_2 \dots n_d}{\varepsilon}\right) \right) \text{\cite{zhang2025covering}}.\]
for some constant $C > 0$.
The logarithm of the covering numbers $\ln \mathcal{N}(K, \epsilon)$ is often called the metric
entropy of $K$. It is known (e.g., \cite{hdp}) that the metric entropy is equivalent to the number of bits needed to encode points in $K$. Note that the set of $d$-mode HOSVD rank-$\vr$ tensors needs $r_1 r_2\dots r_d + \sum_{i=1}^d n_i r_i$ bits to encode it, so the estimate above is $(\ln d)$-optimal. Similarly, for CP rank-$r$ tensors, $r\sum_{i=1}^dn_i$ bits is enough, and the estimate from \cite{zhang2025covering} is optimal up to an extra $d\ln{n}$ factor. The estimates without this extra \revised{$d$} factor are available (and are much easier to get) under additional conditions like bounded factors or incoherence \cite{grotheer2023iterative,ibrahim2020recoverability}.
\end{remark}

\subsection{Low-rank tensor recovery and Iterative Hard Thresholding algorithm} \label{sec:iht}
The low-rank tensor recovery problem aims to find an unknown $d$-dimensional tensor $\tX \in \R^{n_1 \times n_2 \dots n_d}$ from $m$ possibly noisy measurements $\vb = \mathcal{A}(\tX) + \boldsymbol{\eta} \in \R^m$ of a linear measurement operator $\mathcal{A}: \R^{n_1 \times n_2 \dots n_d} ~\to~\R^{m}$, where typically, $m \ll n_1n_2\dots n_d$, and the noise vector $\boldsymbol{\eta}$ is assumed to have a small norm. 
This is a higher-order extension of a classical problem of sparse vector and low-rank matrix recovery.  The recovery guarantees of structured objects from a small number of linear measurements can be established under some assumptions on the measurement matrix, including RIP \cite{candes2006robust}. Various algorithms have been employed to solve the low-rank matrix recovery problem \cite{chi2016kaczmarz,fornasier2016conjugate,tanner2013normalized,wei2016guarantees}. Some of these methods have been extended to the tensor setting as well \cite{rauhut2017low,luo2023low}. In this paper, we consider a particular example of one of the simplest algorithms, iterative hard thresholding. Iterative thresholding based algorithms are incredibly effective in solving low-rank matrix \cite{cai2010singular} and tensor recovery \cite{grotheer2023iterative,goulart2015iterative} problems. Moreover, the simplicity of this method enables us to modify or adapt it to different settings \cite{grotheer2020stochastic,han2015modified,axiotis2022iterative}.

The tensor iterative hard thresholding (TIHT) algorithm is given by the following iterative steps, starting from some initialization $\tX^{0}$: for $j = 1, 2, \ldots,$
\begin{equation}\label{eq:iht-grad} \tY^{\revised{j}} = \tX^{j} + \mu_j \mathcal{A}^*(\vb - \mathcal{A}(\tX^{j}))\;,
\end{equation}
\begin{equation}\label{eq:iht-fit}\tX^{j+1} = T_{\vr}(\tY^{\revised{j}})\;.
\end{equation}
Here, $T_\vr$ denotes a low-rank approximation operator (which can be realized in \revision{several} ways as we discussed in Section~\ref{sec:prelims}), $\mu_j \in \R^+$ represents the step size\revision{,} and $\mathcal{A}^*$ denotes the adjoint of $\mathcal{A}$. It is known that this algorithm converges under TensorRIP assumptions on the measurement operator $\mathcal{A}$, specifically,
\begin{theorem}[TIHT for Low Rank Recovery \cite{rauhut2017low}]
\label{thm:TIHT_hosvd} Let $\mathcal{A}: \R^{n_1 \times n_2 \dots \times n_d} \rightarrow \R^m$ be a measurement operator satisfying $(\delta,{3\textbf{r}})$-TensorRIP with $ \delta_{3\vr}< \frac{a}{4}$ for some $a \in  (0,1)$. Also, let $\tX \in \R^{n_1 \times n_2 \dots \times n_d}$ be a low rank-$\mathbf{r}$ tensor for some notion of tensor rank and $\vb= \mathcal{A}(\tX) + \bm{\eta}$ be (noisy) measurements of $\tX$. Further, let \revised{us} assume that each iteration of TIHT satisfies 
\begin{equation}
\label{eq:extra_assmpn}
    \|  \tY^j - \tX^{j+1}\|_F \leq \left(1+\frac{a^2}{17(1+\sqrt{1+\delta})^2}\right) \|\tY^j - \tX\|_F.
\end{equation}
 Then the iterates of the TIHT algorithm with the step size one satisfy
$$ \|\tX- \tX^{j+1} \|_F \leq a^j \|\tX - \tX^0\|_F + \frac{\xi(a)}{1-a} \|\bm\eta\|_2 \quad \text { for all } j \in \N,$$
where 
\begin{align*}
\xi(a) = 2\sqrt{1+\delta} + \sqrt{4 \sigma(a) + 2 \sigma(a)^2}\frac{1}{1-\delta}{\|\mat{A}\|_{2}} && \text{and} &&
\sigma(a) = \frac{a^2}{17(1+\sqrt{1+\delta} \|\mat{A}\|_2)^2}. 
\end{align*}
The assumption \eqref{eq:extra_assmpn} on the accuracy of the low-rank approximation operator, stated informally, \revised{ensures that the rank-$\vr$ approximation step \eqref{eq:iht-fit} does not increase the distance to the true solution significantly compared to what it was after the gradient update \eqref{eq:iht-grad}}. It is a stronger assumption than $\sqrt{d}$-order approximation of the standard algorithms for HOSVD fitting, but more flexible than the strong guarantees obtained by the slower theoretical algorithms from Theorem~\ref{thm:opt_hosvd_approx}.

\end{theorem}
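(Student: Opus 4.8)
The plan is to run the classical iterative-hard-thresholding contraction argument in the tensor low-rank setting and then unroll the resulting one-step recursion as a geometric series. Writing $\tD^{j} := \tX^{j} - \tX$, the gradient step with $\mu_j = 1$ and $\vb = \mathcal{A}(\tX) + \bm\eta$ gives the basic identity $\tY^{j+1} - \tX = (\mathcal{I} - \mathcal{A}^*\mathcal{A})\tD^{j} + \mathcal{A}^*\bm\eta$. First I would use the near-best approximation hypothesis \eqref{eq:extra_assmpn}: squaring it with $\tX$ as the competing rank-$\vr$ tensor, writing $\tY^{j+1} - \tX^{j+1} = (\tY^{j+1} - \tX) - \tD^{j+1}$, and cancelling the common $\|\tY^{j+1}-\tX\|_F^2$ term yields
\[ \|\tD^{j+1}\|_F^2 \le 2\langle \tY^{j+1} - \tX,\ \tD^{j+1}\rangle + \big((1+\sigma(a))^2 - 1\big)\|\tY^{j+1} - \tX\|_F^2, \]
which reduces everything to controlling an inner product and the residual $\|\tY^{j+1}-\tX\|_F$.

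Substituting the identity, I would bound the inner product in two pieces. Since $\tX^{j},\tX^{j+1},\tX$ each have HOSVD-rank at most $\vr$, the span of $\{\tX^j,\tX,\tX^{j+1}\}$ has rank at most $3\vr$, so the bilinear consequence of $(\delta,3\vr)$-TensorRIP gives $|\langle(\mathcal{I}-\mathcal{A}^*\mathcal{A})\tD^{j},\tD^{j+1}\rangle|\le \delta_{3\vr}\|\tD^{j}\|_F\|\tD^{j+1}\|_F$ — this is exactly why RIP at order $3\vr$, rather than $2\vr$, is required — while the noise piece obeys $\langle\mathcal{A}^*\bm\eta,\tD^{j+1}\rangle=\langle\bm\eta,\mathcal{A}\tD^{j+1}\rangle\le \sqrt{1+\delta}\,\|\bm\eta\|_2\|\tD^{j+1}\|_F$ by the upper RIP bound. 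For the residual, because $\tD^{j}$ already lies in the low-rank subspace I would bound $\|\mathcal{A}^*\mathcal{A}\tD^{j}\|_F\le \sqrt{1+\delta}\,\|\mat{A}\|_2\|\tD^{j}\|_F$, obtaining $\|\tY^{j+1}-\tX\|_F\le(1+\sqrt{1+\delta}\,\|\mat{A}\|_2)\|\tD^{j}\|_F+\|\mat{A}\|_2\|\bm\eta\|_2$.

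Assembling these into a scalar quadratic inequality for $\|\tD^{j+1}\|_F$, the key calibration is that $\sigma(a)(1+\sqrt{1+\delta}\,\|\mat{A}\|_2)^2 = a^2/17$, so the $\|\mat{A}\|_2^2$ factor produced by the residual term is cancelled and that contribution collapses to order $a^2\|\tD^j\|_F^2$; together with $\delta_{3\vr}<a/4$, solving the quadratic produces the one-step contraction $\|\tD^{j+1}\|_F\le a\|\tD^{j}\|_F+\xi(a)\|\bm\eta\|_2$, in which the $2\sqrt{1+\delta}$ summand of $\xi$ originates from the noise term of the inner product and the $\sqrt{4\sigma(a)+2\sigma(a)^2}\,\|\mat{A}\|_2$ summand from the noise term of the residual. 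Unrolling this recursion and summing the geometric series with ratio $a<1$ gives the claimed bound with the factor $1/(1-a)$ on the noise level.

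I expect the main obstacle to be the residual $\|\tY^{j+1}-\tX\|_F$: in vector and matrix IHT one restricts to the combined support, whereupon the off-support portion of $\tY^{j+1}$ cancels and the whole analysis stays in terms of $\delta$ alone. No analogous support restriction exists for low-rank tensors, so this full-space norm genuinely cannot be bounded through RIP and must carry the operator norm $\|\mat{A}\|_2$. The role of hypothesis \eqref{eq:extra_assmpn} and of the $\|\mat{A}\|_2$-dependent definition of $\sigma(a)$ is precisely to neutralize this unavoidable factor; making the constants line up so that the contraction rate stays strictly below $a$ is the delicate step.
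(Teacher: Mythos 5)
Your proposal is correct, and it is essentially the proof of this theorem: the paper itself states the result without proof (it is imported from \cite{rauhut2017low}), and your argument reproduces the source's route step for step --- squaring the near-optimality hypothesis and cancelling to isolate $2\langle \tY-\tX,\tD^{j+1}\rangle$, splitting via the identity $\tY - \tX = (\mathcal{I}-\mathcal{A}^*\mathcal{A})\tD^j + \mathcal{A}^*\bm\eta$, bounding the bilinear term by $\delta_{3\vr}\|\tD^j\|_F\|\tD^{j+1}\|_F$ through the rank-$3\vr$ parallelogram argument (the same mechanism this paper proves separately as Lemma~\ref{lem:TRIP_innerproduct} for the KaczTIHT analysis), carrying $\|\mat{A}\|_2$ through the residual because no RIP bound is available off the low-rank set, and closing with the quadratic inequality $x^2 \le 2Bx + C \Rightarrow x \le 2B + \sqrt{C}$ followed by the geometric series. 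Your diagnosis of why order $3\vr$ (not $2\vr$) is needed and why $\sigma(a)$ must contain $\|\mat{A}\|_2$ is exactly the point of the original proof.

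One caveat worth recording: your first display invokes thresholding accuracy $(1+\sigma(a))$, but \eqref{eq:extra_assmpn} as printed allows the larger tolerance $a^2/\bigl(17(1+\sqrt{1+\delta})^2\bigr)$, which exceeds $\sigma(a)$ whenever $\|\mat{A}\|_2 \ge 1$ (the generic case for a compressive RIP matrix); so, read literally, the printed hypothesis does not imply the inequality you start from. This is an inconsistency in the paper's transcription of the theorem rather than a gap in your argument: the source's assumption carries the factor $\|\mat{A}\|_2$ inside the denominator, exactly as your calibration $\sigma(a)\bigl(1+\sqrt{1+\delta}\,\|\mat{A}\|_2\bigr)^2 = a^2/17$ requires, and your proof is faithful to that corrected statement.
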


\subsection{Independent sub-Gaussian measurement operator and aus}
As in the matrix setting, a large class of random measurement ensembles can be shown to satisfy TensorRIP with high probability. 
\revised{Recall that we identify  the linear measurement operator $\mathcal{A}$ with its transformation matrix $\mat{A}$ that acts on tensor vectorizations.} A common and simple way to generate the random measurement matrices $\mat{A}$ is by independently sampling each of its rows from a sub-Gaussian distribution.

\begin{definition}[Sub-Gaussian Distribution] A random variable $X$ is said to follow a \emph{sub-Gaussian distribution} if there exists some $K > 0$, such that $\mathbbm{E} \exp(X^2/K^2) \leq 2.$ Moreover, the sub-Gaussian norm of $X$, denoted by $\|X\|_{\psi_2}$ is defined to be, 
$$ \|X\|_{\psi_2} = \underset{t>0}{\inf} \ \E \exp(X^2/t^2).$$
\end{definition}
Examples of sub-Gaussian random variables include Gaussian, Bernoulli\revision{,} and all bounded random variables. Compressive measurement operators with independent sub-Gaussian entries are known to satisfy RIP, specifically,
\begin{theorem}[TensorRIP for low HOSVD-rank tensors \protect{\cite[Theorem 2]{rauhut2017low}}] 
\label{lem:trip_hosvd}
Let  $\mat{A} \in \R^{m \times n_1 n_2 \dots n_d}$ be a measurement matrix whose rows consist of randomly sampled independent sub-Gaussian vectors. Then, $\frac{1}{\sqrt{m}}\mat{A}$ satisfies $(\delta,\vr)$-TensorRIP over $\mathcal{S}_\vr^{HOSVD}$ where $ \vr = (r_1,r_2, \dots, r_d)$ (see Definition~\ref{def:tensorrip}), with probability at least $1 - \varepsilon$ if
\begin{equation} m \geq C\delta^{-2} \max\{ (r^d + \sum_i n_ir_i)\ln(d), \ln(\varepsilon^{-1})\}, \end{equation}
\noindent where $r = \underset{i\in[d]}{\max} \ r_i$ and the constant $C>0$ depends on the sub-Gaussian norm of the distribution.
\end{theorem}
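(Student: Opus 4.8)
The plan is to follow the classical covering-number route for establishing a restricted isometry property, adapted to the non-linear set $\mathcal{S}_\vr^{HOSVD}$: combine a point-wise concentration estimate for the squared measurement norm with the metric-entropy bound recorded in the covering-number Remark, pass from a fixed point to the whole set by a net argument, and then optimize the net resolution. Throughout, write $\va_1, \ldots, \va_m \in \R^{n_1 \cdots n_d}$ for the independent isotropic sub-Gaussian rows of $\mat{A}$ (mean zero, variance one entries), so that for a unit-norm tensor $\tY$ with $\vy = \algoname{Vec}(\tY)$ we have $\|\tfrac{1}{\sqrt m}\mat{A}\vy\|_2^2 = \tfrac{1}{m}\sum_{k=1}^m \langle \va_k, \vy\rangle^2$.

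First I would establish concentration at a fixed $\tY$. By isotropy, $\langle \va_k, \vy\rangle$ is a mean-zero sub-Gaussian scalar with variance $\|\vy\|_2^2 = 1$, so $\langle \va_k, \vy\rangle^2$ is sub-exponential with mean one and $\psi_1$-norm controlled by the square of the sub-Gaussian constant $K$. Bernstein's inequality for sums of i.i.d.\ centered sub-exponential variables then gives, for $\delta \in (0,1)$,
\begin{equation}
\prob\left(\left|\tfrac{1}{m}\sum_{k=1}^m \langle \va_k, \vy\rangle^2 - 1\right| > \delta\right) \le 2\exp(-c\,m\,\delta^2),
\end{equation}
with $c$ depending only on $K$; this is the single-point version of the desired estimate \eqref{eq:trim-rip}.

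Second, I would discretize. Because the difference of two HOSVD rank-$\vr$ tensors has HOSVD rank at most $2\vr$, I would run the argument on the slightly larger set $\mathcal{S}_{2\vr}^{HOSVD}$ and take a $\rho$-net $\mathcal{N}$ of it with respect to $\|\cdot\|_F$; the covering-number Remark gives $\ln|\mathcal{N}| \lesssim (r^d + \sum_i n_i r_i)\ln(d/\rho)$, the doubling of the ranks affecting only constants. Applying the concentration bound to every net point and taking a union bound, the event that concentration fails somewhere on $\mathcal{N}$ has probability at most $2|\mathcal{N}|\exp(-c\,m\,\delta^2)$, which lies below the target failure probability $\varepsilon$ once $m \gtrsim \delta^{-2}(\ln|\mathcal{N}| + \ln(\varepsilon^{-1}))$, exactly the claimed order.

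The main obstacle is the final passage from the net to the full set, and this is where the non-linearity of the low-rank tensor set really enters. For an arbitrary unit-norm $\tX \in \mathcal{S}_\vr^{HOSVD}$ I would pick the nearest net point $\tY$ and expand $\|\tfrac{1}{\sqrt m}\mat{A}\vx\|_2^2 - \|\tfrac{1}{\sqrt m}\mat{A}\vy\|_2^2 = \langle \tfrac{1}{\sqrt m}\mat{A}(\vx - \vy),\, \tfrac{1}{\sqrt m}\mat{A}(\vx + \vy)\rangle$, where both $\vx - \vy$ and $\vx + \vy$ are vectorizations of tensors of HOSVD rank at most $2\vr$, so the cross term is controlled by the supremum of $\|\tfrac{1}{\sqrt m}\mat{A}\,\cdot\|_2$ over $\mathcal{S}_{2\vr}^{HOSVD}$ times $\rho$. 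This yields a self-referential inequality for the set-wide deviation, which I would resolve by a short telescoping/greedy approximation (or an equivalent fixed-point bound) to obtain a set-wide RIP constant of the form $\delta_0 + C\rho$. Choosing the net resolution $\rho \asymp \delta$ then gives the stated sample complexity up to the logarithmic factor $\ln(d/\delta)$, whose $\delta$-dependence is of lower order than the leading $\ln d$. The two delicate points to get right are (i) ensuring that the residuals in the approximation stay low-rank so the a-priori bound applies at every stage, and (ii) preventing the rank-doubling from inflating the exponential $r^d$ term beyond the stated order.
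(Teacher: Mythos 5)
Your first two steps are sound: the Bernstein bound for the sub-exponential variables $\langle \va_k,\vy\rangle^2$ and the union bound over a net are exactly the standard ingredients, and the resulting requirement $m \gtrsim \delta^{-2}(\ln|\mathcal{N}| + \ln(\varepsilon^{-1}))$ is the right shape. Note for context that the paper itself does not prove this statement; it is imported from \cite{rauhut2017low}, where the proof writes the RIP constant as the supremum of a second-order chaos process and applies the Krahmer--Mendelson--Rauhut theorem, bounding Talagrand's $\gamma_2$ functional by Dudley's entropy integral with the covering numbers of Remark~\ref{lem:covering_numbers_tensors}; that multiscale (chaining) route is what yields the clean $\ln(d)$ factor with no residual $\ln(1/\delta)$.

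The genuine gap in your argument is the net-to-set extension, which is precisely the step you label ``delicate'' without resolving. Your self-referential inequality does not close: concentration is known only at net points of $\mathcal{S}_{2\vr}^{HOSVD}$, and bounding the cross term $\langle \frac{1}{\sqrt m}\mat{A}(\vx-\vy),\frac{1}{\sqrt m}\mat{A}(\vx+\vy)\rangle$ uniformly requires control of $\|\frac{1}{\sqrt m}\mat{A}\vz\|_2$ over differences of elements of $\mathcal{S}_{2\vr}^{HOSVD}$, i.e.\ over tensors of rank up to $4\vr$ --- a strictly larger set than the one you are bounding. Telescoping makes this worse, not better: the rank doubles at every stage, the entropy of the rank-$2^k\vr$ sets blows up, and there is no stage at which an a priori bound applies to the residual, so no fixed-point inequality in a single quantity ever forms. (The sparse-vector analogue closes only because one can fix a support, a linear subspace closed under differences, and union-bound over supports; $\mathcal{S}_\vr^{HOSVD}$ is not a union of polynomially many subspaces.) There is also a quantitative error: your claim that doubling the rank ``affects only constants'' is false for the leading entropy term, since $\prod_i(2r_i) = 2^d r^d$, a factor $2^d$ that cannot be absorbed into the constant $C$ of the theorem. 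The standard repair, which stays inside your framework, is to discard the abstract Frobenius net and use the parametrization: take product nets over cores and factor matrices, and for $\tX = \tC\times_1\mat{U}_1\cdots\times_d\mat{U}_d$ write $\tX-\bar{\tX}$ as a telescoping sum of $d+1$ terms in which one factor (or the core) is swapped at a time. Each term has HOSVD rank at most $\vr$ and Frobenius norm at most the net resolution, so $\|\mat{A}(\vx-\bar{\vx})\|_2 \le \sqrt{1+\delta_\vr}\,(d+1)\rho$ with the \emph{same} $\delta_\vr$ you are bounding, and the self-bounding inequality now closes whenever $(d+1)\rho < 1$. This structure is exactly the origin of the $\ln\left(\frac{3(d+1)}{\varepsilon}\right)$ in Remark~\ref{lem:covering_numbers_tensors}; taking $\rho \asymp \delta/(d+1)$ then proves the theorem with $\ln(d/\delta)$ in place of $\ln(d)$, the small loss that the chaining proof of \cite{rauhut2017low} avoids.
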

We can also establish TensorRIP for low CP rank tensors under sub-Gaussian measurements as a simple corollary to a recent result from \cite{zhang2025covering}. 

\begin{theorem}[TensorRIP for Low Rank CP tensors \cite{zhang2025covering}]
\label{theorem:trip_cp}

Let  $\mat{A} \in \R^{m \times n_1 n_2 \dots n_d}$ be a measurement matrix whose rows consist of randomly sampled independent sub-Gaussian vectors. Then, $\frac{1}{\sqrt{m}}\mat{A}$ satisfies $(\delta,\revised{r})$-TensorRIP over $\mathcal{S}_{r}^{CP}$ (see Definition~\ref{def:tensorrip}) with probability at least $1 - \varepsilon$ if
\begin{equation} m \geq C\delta^{-2} \max\left\{rd^2n\ln(\revised{r}dn),\ln \varepsilon^{-1}\right\},
\end{equation}
\noindent where $n = \underset{i \in [d]}{\max} \{n_i\}$ and the constant $C>0$ here depends on the sub-Gaussian norm of the distribution.

\end{theorem}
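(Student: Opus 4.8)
The statement is an instance of the standard two-step scheme for deriving RIP over a structured set: a pointwise concentration estimate for one fixed tensor, followed by a covering-number argument that makes the estimate uniform over all of $\mathcal{S}_r^{CP}$. This is exactly the route behind the HOSVD version in Theorem~\ref{lem:trip_hosvd}; only the input covering number changes, and here it is the CP bound of Remark~\ref{lem:covering_numbers_tensors}.

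First I would fix a unit-norm $\tX \in \mathcal{S}_r^{CP}$, set $\vx = \algoname{Vec}(\tX)$, and write $\|\tfrac{1}{\sqrt m}\mat{A}\vx\|_2^2 = \tfrac1m\sum_{i=1}^m \langle \va_i, \vx\rangle^2$, where $\va_1,\dots,\va_m$ are the i.i.d. isotropic sub-Gaussian rows of $\mat{A}$. Each $\langle \va_i,\vx\rangle$ is then a mean-zero, unit-variance sub-Gaussian variable with $\psi_2$-norm controlled by $K$, so $Z_i := \langle\va_i,\vx\rangle^2 - 1$ is mean-zero and sub-exponential. Bernstein's inequality gives $\pr\big(|\tfrac1m\sum_i Z_i| > \delta\big) \le 2\exp(-c\,m\delta^2/K^4)$ for $\delta \in (0,1)$, which is the sought pointwise bound with failure probability decaying like $\exp(-c\,m\delta^2)$.

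Next I would make this uniform. Fix a scale $\eta$ proportional to $\delta$ and take an $\eta$-net $\mathcal{N}$ of $\mathcal{S}_r^{CP}$ in Frobenius distance, with $\ln|\mathcal{N}| \le Cr\sum_{i} n_i \ln(\prod_i n_i/\eta)$ from Remark~\ref{lem:covering_numbers_tensors}. A union bound of the pointwise estimate over $\mathcal{N}$ shows the concentration holds simultaneously on $\mathcal{N}$ once $m \gtrsim \delta^{-2}\big(\ln|\mathcal{N}| + \ln\varepsilon^{-1}\big)$. Using $n = \max_i n_i$, so that $\sum_i n_i \le dn$ and $\ln(\prod_i n_i/\eta) \lesssim d\ln(dn)$ (treating the $\delta$-dependence of $\eta$ as lower order and absorbed by the leading $\delta^{-2}$), this is precisely $m \gtrsim \delta^{-2}\max\{rd^2 n\ln(dn), \ln\varepsilon^{-1}\}$.

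The genuine obstacle is the passage from the net to the full set, since $\mathcal{S}_r^{CP}$ is neither a subspace nor a finite union of subspaces, and the square in the RIP functional cannot be linearized by the naive net-refinement used for sparse vectors. I would resolve this in one of two standard ways. The elementary route uses that the difference of two rank-$r$ CP tensors has CP rank at most $2r$: for $\tX$ with nearest net point $\tX_0$, the error $\tX - \tX_0$ lies in the rank-$2r$ cone, so running the net at rank $2r$ and controlling the cross term $\langle \mat{A}\vx_0, \mat{A}(\vx - \vx_0)\rangle$ through the operator-norm/RIP bound on that cone, then taking $\eta$ small, folds the residual into $\delta$; replacing $r$ by $2r$ only affects the constant $C$. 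The cleaner route treats $\vx \mapsto \tfrac1m\sum_i(\langle\va_i,\vx\rangle^2 - 1)$ as a sub-exponential chaos process indexed by $\mathcal{S}_r^{CP}$ and bounds its supremum directly by a Dudley/generic-chaining integral of the same covering numbers, which for the polynomial metric entropy above again yields $m \gtrsim \delta^{-2} rd^2 n\ln(dn)$; this is the form in which the result follows as a corollary of \cite{zhang2023covering}.
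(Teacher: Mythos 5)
Your proposal is correct only along its second (chaining) branch, and it takes a genuinely different — and longer — route than the paper. The paper's proof is a two-line verification-and-cite: it exhibits $\mathcal{S}_r^{CP}$ as the range of the polynomial map $(\vx_{11},\dots,\vx_{dr}) \mapsto \sum_{j=1}^r \vx_{1j}\bigcirc\cdots\bigcirc\vx_{dj}$ from $\R^{r\sum_i n_i}$, notes that its coordinate functions have degree at most $d$, and invokes \cite[Corollary~3.10]{zhang2023covering} as a black box, which already packages the covering-number estimate for polynomial images together with the sub-Gaussian chaos-process bound and outputs the stated $m \gtrsim \delta^{-2} r d^2 n \ln(dn)$. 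Your ``cleaner route'' (Dudley/generic-chaining bound on $\vx \mapsto \frac1m\sum_i(\langle \va_i,\vx\rangle^2-1)$ over $\mathcal{S}_r^{CP}$, fed by the metric entropy from Remark~\ref{lem:covering_numbers_tensors}) essentially re-derives what that corollary does internally, so it is sound and yields the same measurement count; what the paper's approach buys is that one never has to confront the net-to-set passage at all, since the cited result is stated for suprema over the full set.

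Your ``elementary route,'' however, has a genuine gap for CP rank, and it is worth being precise about why. Controlling the cross term $\langle \mat{A}\vx_0, \mat{A}(\vx-\vx_0)\rangle$ requires a bound on $\sup\|\mat{A}\vz\|_2/\|\vz\|_2$ over the rank-$2r$ cone, which is (up to the constant in the rank) exactly the quantity being proven — circular unless one can self-bound. The standard fix in the sparse and low-rank \emph{matrix} settings is to split the residual $\vz - \vz_0$ (which has rank up to $4r$) into two rank-$\le 2r$ pieces whose norms do not exceed $\|\vz - \vz_0\|$; for matrices this is available via SVD truncation (orthogonal pieces, Eckart--Young). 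For CP rank no such splitting exists: a CP decomposition is not orthogonal, its rank-one terms can nearly cancel, so a rank-$4r$ tensor of tiny norm may only decompose into rank-$2r$ summands of enormous norm, and best low-CP-rank approximations need not even exist (as the paper notes in Section~\ref{sec:prelims}). So the recursion $D_{2r} \le (\text{net max}) + \varepsilon D_{4r} \le \cdots$ does not close. If you drop that branch and keep only the chaining argument (or simply verify the polynomial-map hypothesis and cite \cite{zhang2023covering} as the paper does), the proof is complete; the remaining bookkeeping discrepancy ($\ln(n/\delta)$ versus $\ln(dn)$ in the entropy integral) is minor and matches the paper's statement up to constants.
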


\begin{proof} Consider the map $p: \R^{r\sum_{i\in[d]} n_i} \rightarrow \R^{n_1 \times n_2 \dots \times n_d}$ where
$$ (\vx_{11}, \vx_{12}, \dots, \vx_{21}, \vx_{22} \dots \vx_{2r}, \dots \vx_{d1}, \vx_{d2} \dots \vx_{dr}) \mapsto \sum_{j=1}^{r} \vx_{1j} \out \vx_{2j} \out \dots \out \vx_{dj},$$
where $\vx_{ij} \in \R^{n_i}$ for $i\in[d]$ and $j\in[r]$. This is a polynomial map whose range is $\mathcal{S}_r^{CP}$, and whose coordinate functions have degree at most $d$. Then, the result follows from \cite[Theorem~3.12]{zhang2025covering}. 
\end{proof}
Then,  Theorem~\ref{thm:TIHT_hosvd} completes the story of why tensor iterative hard thresholding can be applied for recovery of low CP or HOSVD rank tensors from much fewer independent sub-Gaussian measurements.
\section{Face-splitting sketching and the benefits of trimming}

\label{sec:facespkit}

While sub-Gaussian measurement ensembles have nice theoretical guarantees, note that to recover $\tX$, we would need to store \revision{both} $\vb$ and the sub-Gaussian measurement matrix $\mat{A}$. Not only would storing $\mat{A} \in \R^{m \times n^d}$ be expensive, but matrix-vector operations  with this would have large computational overheads. 
This motivated the use of more database-friendly measurements that exploit the natural algebraic structure of the tensors and require storage memory that scales with $n$ rather than $n^d$. 

\subsection{Face-Splitting Measurements} 
One of the simplest representative examples of tensorial database-friendly measurements are face-splitting measurement ensembles. Given a collection of measurement matrices $\{\mat{A}_i \in \R^{m \times n_i}\}_{i \in [d]}$\revision{,} we define a measurement operator that acts on the space of tensors via the \emph{face-splitting product} of these matrices. The face-splitting product\revision{,} which we shall denote by the symbol $\bullet$\revision{,} is given by
$$\mat{A} := \mat{A}_1 \bullet \mat{A}_2 \dots \bullet \mat{A}_d =   \left(\mat{A}_1^T \odot \mat{A}_2^T \dots \odot \mat{A}_d^T\right)^T\in \R^{m \times n_1n_2 \dots n_d}.$$
where $\odot$ \revision{represents} the Khatri-Rao product.  Then, each row of \revised{$\mat{A}$} is a Kronecker product of the respective rows of the component matrices 
$$\mat{A} = \begin{bmatrix} 
\va_{11}^T \otimes \va_{21}^T \otimes \dots \otimes \va_{d1}^T  \\
\va_{12}^T \otimes \va_{22}^T \otimes \dots \otimes \va_{d2}^T  \\
 \vdots  \\
\va_{1m}^T \otimes \va_{2m}^T \otimes \dots \otimes \va_{dm}^T\\
\end{bmatrix},$$
\revised{where $\{\va_{ij}^T\}$ denotes the $j$-th row of the component matrix $\mat{A}_i$.}\revised{ Note that,} if the component matrices $\mat{A}_i$ are independent random matrices, then the rows of $\mat{A}$ are independent.

 As mentioned above, a key difficulty of using tensor-structured measurements is that such measurement operators lack non-trivial low-rank TensorRIP guarantees. So, establishing successful iterative methods recovering tensors from such measurements has been a topic of sustained interest, and has partial success in the matrix case $d = 2$ \cite{zhong2015efficient, foucart2019iterative,cai2015rop}. For comparison, Johnson-Lindenstrauss properties (RIP property over finite sets) are known to have the following form:

 \begin{theorem} [\protect{\cite[Theorem 2]{ahle2019almost}}]
 \label{thm:jl_face_splitting}
Given $\delta > 0, 0 <\revised{\psi} \leq e^{-2}$ and matrices  $\{\mat{A}_i\}_{i\in[d]}\subset \R^{m \times n}$ whose rows are sampled i.i.d. from a mean zero, isotropic sub-Gaussian distribution, let $$\mat{A} := \frac{1}{\sqrt{m}} \mat{A}_1 \, \bullet \, \mat{A}_2 \, \bullet \dots \bullet \, \mat{A}_{d}\;.$$Then, for any finite set $\mathcal{S} \subseteq \R^{n \times n \times \dots \times n}$,  $\mat{A}$ satisfies $(\delta, \mathcal{S})$-RIP with probability at least $1 - \revised{\psi}$ whenever
\begin{equation}\label{eq:kron-target-dim}
m \geq C^d\max \left\{ \delta^{-2} \ln \frac{|\mathcal{S}|}{\revised{\psi}} , \delta^{-1}\left(\ln\frac{|\mathcal{S}|}{\revised{\psi}}\right)^{d} \right\},
\end{equation}
for a constant $C \in \R^+$ that depends only on the sub-Gaussian norm of the rows of $\mat{A}$.  It is also shown that this bound is optimal in $\delta$ and $\revised{\psi}$ \cite[Theorem 4]{ahle2019almost}.
\end{theorem}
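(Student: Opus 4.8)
The plan is to reduce the uniform statement over $\mathcal{S}$ to a single-point concentration estimate and then take a union bound. Fix $\tY$ with $\|\tY\|_F = 1$ and set $y = \algoname{Vec}(\tY)$. Absorbing the $\tfrac{1}{\sqrt m}$ normalization, write
$$\|\mat{A} y\|_2^2 = \frac{1}{m}\sum_{j=1}^m Z_j^2, \qquad Z_j = \langle \va_{1j}\otimes \va_{2j}\otimes\dots\otimes\va_{dj},\, y\rangle,$$
so that $Z_j$ is the $d$-linear form obtained by contracting $\tY$ against the $j$-th rows of the component matrices. Because the rows are isotropic and the $d$ modes are independent, $\E Z_j^2 = \|y\|_2^2 = 1$; hence the quantity of interest has mean exactly one, and it suffices to control the fluctuation of $\tfrac1m\sum_j (Z_j^2 - 1)$ and then require its tail to be at most $\eta/|\mathcal{S}|$.

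The technical core is a moment bound for the homogeneous chaos $Z_j$. Since the vectors $\va_{1j},\dots,\va_{dj}$ are \emph{independent} (one per mode), $Z_j$ is a decoupled order-$d$ chaos, and I would bound its moments by iterated conditioning. Conditioning on $\va_{2j},\dots,\va_{dj}$, the form $Z_j = \langle \va_{1j},\vw\rangle$ is linear in the isotropic sub-Gaussian vector $\va_{1j}$, so $\|Z_j\|_{L^p\mid\text{rest}} \le C\sqrt{p}\,\|\vw\|_2$, where $\vw$ is the partial contraction of $y$ against the remaining $d-1$ vectors and $\|\vw\|_2^2$ is itself a chaos of order $d-1$. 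Iterating across the modes yields $\|Z_j\|_{L^p} \le (C\sqrt{p})^d$ for $p \ge 2$, i.e. $Z_j$ is sub-Weibull of order $2/d$, and consequently $Z_j^2 - 1$ is a centered variable with $\|Z_j^2\|_{L^p} \le (C'p)^d$.

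With this tail in hand, $S = \sum_{j=1}^m (Z_j^2 - 1)$ is a sum of $m$ i.i.d. centered sub-Weibull($1/d$) variables, and I would apply a generalized Bernstein inequality (through the moment generating function, or equivalently Markov applied to the $p$-th moment optimized over $p$) to obtain a two-regime tail of the form
$$\pr\!\left[\,\left|\tfrac1m S\right| > \delta\,\right] \le 2\exp\!\left(-c\,\min\{\,m\delta^2,\ (m\delta)^{1/d}\,\}\right).$$
The small-deviation (Gaussian) regime is governed by the variance and forces $m\delta^2 \gtrsim L$, while the heavy-tailed regime contributes $(m\delta)^{1/d}\gtrsim L$, i.e. $m\delta \gtrsim L^d$, where $L = \ln(|\mathcal{S}|/\eta)$. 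Requiring this probability to be at most $\eta/|\mathcal{S}|$ and union bounding over the $|\mathcal{S}|$ points recovers exactly the stated sample complexity $m \gtrsim C^d\max\{\delta^{-2}L,\ \delta^{-1}L^d\}$.

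The step I expect to be the main obstacle is getting the dependence on the order $d$ correct in both the chaos moment bound and the subsequent Bernstein inequality: tracking how the factors $C^d$ and the growth $p^{d/2}$ propagate through the iterated conditioning, and then establishing the sharp two-regime tail for sums of sub-Weibull($1/d$) variables so that the crossover between the $\delta^{-2}$ and $\delta^{-1}(\cdot)^d$ terms lands at the right place. The decoupling across modes is what makes the conditioning clean; without the independence of the $\va_{ij}$ across $i$ the chaos would not factor and one would instead need the full hypercontractivity machinery for dependent chaos.
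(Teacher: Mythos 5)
Your proposal is correct, and it is essentially the proof of the cited source: the paper itself states this theorem without proof (it is imported from \cite[Theorem 2]{ahle2019almost}), and your two ingredients --- the iterated-conditioning chaos moment bound $\|Z_j\|_{L^p}\le (C\sqrt{p})^{d}\|y\|_2$, which is precisely \cite[Lemma 2]{ahle2019almost} (quoted in this paper as \eqref{eqn:genkhint_ineq1}), followed by a two-regime tail for $\frac{1}{m}\sum_{j}(Z_j^2-1)$ and a union bound over $\mathcal{S}$ --- are exactly how Ahle--Knudsen obtain the complexity $C^d\max\{\delta^{-2}L,\;\delta^{-1}L^d\}$ with $L=\ln(|\mathcal{S}|/\eta)$. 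One technical correction: for $d\ge 2$ the summands $Z_j^2-1$ are sub-Weibull of order $1/d<1$, so their moment generating function is infinite and the parenthetical ``through the moment generating function'' route does not exist; you must take the moment/Markov route you also mention, and you should derive the tail from Lata\l{}a-type moment bounds for i.i.d.\ sums (whose constants are absolute), so that the exponent reads $c\min\{m\delta^2/C^{2d},\,(m\delta/C^{d})^{1/d}\}$ with $c$ independent of $d$ --- an off-the-shelf sub-Weibull Bernstein inequality whose leading constant degrades like $c_{1/d}\sim 1/d$ would inflate the second term of the sample complexity to $d^{d}L^{d}$ rather than the claimed $C^{d}L^{d}$, which is exactly the constant-tracking obstacle you flagged.
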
 

\subsection{Failure of the RIP}
 A direct extension of this result to the set $\mathcal{S}$ of all low-rank tensors leads to unsatisfactory results. Moreover, it can be shown that uniform compression of the set of all low-rank tensors with any non-trivial ratio is impossible, as a consequence of such measurements essentially mimicking the low-rank structure of the tensors:

\begin{proposition} 
\label{prop:db_friendly_trip}
Consider a random measurement matrix $\mat{A} = \mat{A}_1 \bullet \mat{A}_2 \dots \bullet \mat{A}_d \in \R^{m \times n^d}$, where $\{\mat{A}_i\}_{i \in [d]}$ are random matrices whose rows are sampled from $\mathcal{D}$, a distribution of independent, mean-zero, sub-Gaussian random vectors with
sub-Gaussian norm bounded by $\ell$, whose coordinates are independent random variables that
are bounded away from zero, $|(\mat{A}_i)_{kl}| > q > 0$ a.s. for any $k \in [m]$ and $l \in [n]$.  Given any $\delta \in (0, 1)$ and $\phi<1/2$, we have that
$$ \lim_{ n \rightarrow \infty} \mathbbm{P}( \alpha_n \mat{A} \text{ satisfies } (\delta,r)\text{-TensorRIP} \text{ for some } m < n^{d(1 - \phi)} \text{ and } \alpha_n \in \R^+) = 0. $$

\end{proposition}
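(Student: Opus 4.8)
The plan is to exploit the fact that the rows of $\mat{A}$ are themselves rank-one tensors, so that $\mat{A}$ cannot spread out the values $\|\mat{A}\algoname{Vec}(\tY)\|_2^2$ evenly over unit rank-one tensors $\tY = \vy_1 \bigcirc \dots \bigcirc \vy_d$. Since $\mathcal{S}_r^{CP}$ has in its closure all unit-norm rank-one tensors (any such tensor is a limit of unit-norm tensors of exact CP-rank $r$, obtained by adding $\varepsilon$ times $r-1$ generic orthogonalized rank-one terms and renormalizing) and $\tY \mapsto \|\mat{A}\algoname{Vec}(\tY)\|_2^2$ is continuous, the two (non-strict) RIP inequalities, if they hold on $\mathcal{S}_r^{CP}$, extend to all unit rank-one $\tY$. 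So it suffices to break RIP on rank-one tensors. The key observation is that the scale-invariant ratio
\[ \rho(\mat{A}) := \frac{\sup_{\tY}\|\mat{A}\algoname{Vec}(\tY)\|_2^2}{\inf_{\tY}\|\mat{A}\algoname{Vec}(\tY)\|_2^2}, \]
with both extrema over unit-norm rank-one $\tY$, must satisfy $\rho(\mat{A}) \le \tfrac{1+\delta}{1-\delta}$ whenever $\alpha_n\mat{A}$ is $(\delta,r)$-TensorRIP for some $\alpha_n$, because then $(1-\delta) \le \alpha_n^2\|\mat{A}\algoname{Vec}(\tY)\|_2^2 \le 1+\delta$ for all such $\tY$. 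Hence it is enough to show $\rho(\mat{A}) > \tfrac{1+\delta}{1-\delta}$ with probability tending to one, uniformly over $m < n^{d(1-\phi)}$.

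For the numerator I would align a rank-one tensor with the first row: take $\vy_i = \va_{i1}/\|\va_{i1}\|_2$, so that the first coordinate of $\mat{A}\algoname{Vec}(\tY)$ equals $\prod_{i=1}^d \|\va_{i1}\|_2$. Since $\|\mat{A}\algoname{Vec}(\tY)\|_2^2$ dominates its first coordinate squared and, by the a.s. bound $|(\mat{A}_i)_{kl}| > q$, we have $\|\va_{i1}\|_2^2 = \sum_{l=1}^n (\mat{A}_i)_{1l}^2 \ge nq^2$, this gives the \emph{deterministic} lower bound $\sup_{\tY}\|\mat{A}\algoname{Vec}(\tY)\|_2^2 \ge q^{2d}n^d$, valid for every $m \ge 1$. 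This is precisely where the ``bounded away from zero'' hypothesis enters, and it yields a clean sure bound without any concentration argument.

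For the denominator I would fix the deterministic tensor $\tY^0 = \ve_1 \bigcirc \dots \bigcirc \ve_1$, for which $\|\mat{A}\algoname{Vec}(\tY^0)\|_2^2 = \sum_{k=1}^m Z_k$ with $Z_k = \prod_{i=1}^d (\mat{A}_i)_{k1}^2$ independent and identically distributed across $k$. By independence across modes, $\E Z_k = \prod_i \E (\mat{A}_i)_{k1}^2 \le (C\ell^2)^d$, so $\E\|\mat{A}\algoname{Vec}(\tY^0)\|_2^2 \le m(C\ell^2)^d = O(n^{d(1-\phi)})$. As $\inf_{\tY}\|\mat{A}\algoname{Vec}(\tY)\|_2^2 \le \|\mat{A}\algoname{Vec}(\tY^0)\|_2^2$, combining with the numerator bound gives, on the event that $\tY^0$ behaves near its mean, $\rho(\mat{A}) \gtrsim (q^2/C\ell^2)^d\, n^{d\phi} \to \infty$, which eventually exceeds $\tfrac{1+\delta}{1-\delta}$.

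The genuine obstacle is quantifying the denominator bound strongly enough to survive the existential quantifier over $m$. Each $Z_k$ is a product of $d$ independent squared sub-Gaussians, hence sub-Weibull with a heavy stretched-exponential tail, so $\sum_{k=1}^m Z_k$ does \emph{not} concentrate sub-Gaussianly. I would control its upper tail by a single-big-jump estimate: since $\prod_i a_i \ge s$ forces $\max_i a_i \ge s^{1/d}$, one has $\prob(Z_k \ge s) \le d\,\prob\big((\mat{A}_1)_{k1}^2 \ge s^{1/d}\big) \le 2d\exp(-cs^{1/d}/\ell^2)$, and then split the event $\{\sum_k Z_k \ge L_n\}$, with $L_n = \tfrac{1-\delta}{1+\delta}q^{2d}n^d$, into a maximal-term part and a truncated-sum part estimated by Bernstein's inequality. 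Because $L_n \sim n^d$ while the per-term scale $L_n/m \gtrsim n^{d\phi}$, both parts are super-polynomially small, of order $\exp(-c\,n^{\phi})$. Finally, since the partial sums $S_m = \sum_{k=1}^m Z_k$ are nondecreasing in $m$, the union over admissible $m$ collapses: using $\{\text{$\alpha_n\mat{A}$ is $(\delta,r)$-TensorRIP}\} \subseteq \{S_m \ge L_n\}$ for each $m$, we get
\[ \prob\big(\exists\, m < n^{d(1-\phi)},\ \alpha_n : \alpha_n\mat{A}\ \text{is}\ (\delta,r)\text{-TensorRIP}\big) \le \prob\big(S_{\lfloor n^{d(1-\phi)}\rfloor} \ge L_n\big) \to 0, \]
so no extra union-bound factor is needed and the claim follows.
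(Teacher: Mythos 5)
Your proof is correct, and it shares the paper's high-level mechanism --- contradict RIP by playing a rank-one tensor aligned with a row of $\mat{A}$ (whose measured norm is at least $q^{2d}n^d$ deterministically; your numerator bound is essentially the paper's) against a rank-one tensor that every row nearly misses --- but your execution of the second half takes a genuinely different route. The paper's small-norm witness is a \emph{fresh random} rank-one tensor $\tY = \vy_1 \bigcirc \dots \bigcirc \vy_d$ with $\vy_i \sim \mathcal{D}$, and it controls $\|\mathcal{A}(\tY)\|_2^2$ via near-orthogonality of independent sub-Gaussian vectors (Lemma~\ref{lem:orth_row}, a Hanson--Wright-type bound) together with a union bound over all $md$ inner products $\langle \va_{ji}, \vy_j \rangle$. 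You instead use the \emph{deterministic} witness $\ve_1 \bigcirc \dots \bigcirc \ve_1$, which collapses everything to the upper tail of the scalar sum $S_m = \sum_{k \le m} Z_k$ with $Z_k = \prod_{i}(\mat{A}_i)_{k1}^2$, a sum of i.i.d. nonnegative products of squared sub-Gaussians; since $\E S_m = O(n^{d(1-\phi)})$ while the threshold is $L_n \sim n^d$, even Markov's inequality already proves the limit statement, and your truncation argument upgrades this to an $e^{-cn^{\phi}}$ rate matching the paper's. Your write-up is also more careful on two points the paper glosses over: the closure argument handling the ``exact CP-rank $r$'' reading of $\mathcal{S}_r^{CP}$, and the monotonicity of $m \mapsto S_m$, which absorbs the existential quantifier over $m$ with no union bound. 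One detail to clean up: with truncation level $T = L_n/(2m)$, the truncated-sum event is vacuous (if every $Z_k \le T$ then $S_m \le L_n/2$), so Bernstein is unnecessary --- which is just as well, since its $t/T$ term would only give $e^{-cm}$ rather than $e^{-cn^{\phi}}$; the single-big-jump bound alone yields $\prob(S_m \ge L_n) \le 2dm\,e^{-c(L_n/(2m))^{1/d}/\ell^2} \lesssim n^{d}e^{-c'n^{\phi}}$, which is all you need.
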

Here, TensorRIP fails for both low-CP and HOSVD vectors. Actually, we show the contradiction to the RIP on CP rank-$1$ tensors (or, equivalently, rank $(1,1, \ldots, 1)$-HOSVD tensors). Similar results were mentioned in the literature for matrices \cite{zhong2015efficient,cai2015rop} and tensors \cite{jiang2022near}. Yet, we believe that including a formal result and its proof is valuable for completeness of the discussion. The proof below relies on the approximate orthogonality of the rows inherited from the respective property of independent sub-Gaussian vectors in the form of the following
\begin{lemma}
\label{lem:orth_row}
Suppose that two random vectors, $\vu,\vv \in \R^n$\revision{,} are independent, mean-zero, sub-Gaussian random vectors with
sub-Gaussian norm bounded by $\ell$. Then, for any $ t> 0$, 
\begin{equation} \mathbbm{P}\left(|\langle \vu, \vv \rangle| \geq \ell^2\sqrt{n} t \right) \leq e^{-C \min\{t^2, \sqrt{n}t\}}, \end{equation} for some {absolute} constant $C \in \R^+.$ \end{lemma}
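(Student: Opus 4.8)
The plan is to prove the bound by conditioning on one of the two vectors, say $\vv$, and exploiting the fact that once $\vv$ is fixed the inner product $\langle \vu, \vv\rangle$ is a one-dimensional sub-Gaussian random variable whose scale is controlled by $\|\vv\|_2$. Indeed, since $\vu$ is a sub-Gaussian vector with $\|\vu\|_{\psi_2}\le \ell$, the projection of $\vu$ onto any fixed unit vector is sub-Gaussian with norm at most $\ell$; writing $\langle \vu, \vv\rangle = \|\vv\|_2\,\langle \vu, \vv/\|\vv\|_2\rangle$ and using that $\vu$ and $\vv$ are independent, this yields the conditional tail bound $\mathbb{P}(|\langle \vu, \vv\rangle| \ge s \mid \vv) \le 2\exp(-c s^2/(\ell^2\|\vv\|_2^2))$ for an absolute constant $c>0$. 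The whole difficulty is then transferred to controlling the size of $\|\vv\|_2$, and the two regimes $t^2$ versus $\sqrt{n}\,t$ in the claimed bound will emerge precisely from trading off this conditional Gaussian tail against the fluctuations of $\|\vv\|_2$.

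Next I would control $\|\vv\|_2$. Since each coordinate $v_i = \langle \vv, \ve_i\rangle$ is sub-Gaussian with norm at most $\ell$, one has $\E\|\vv\|_2^2 = \sum_i \E v_i^2 \le C n \ell^2$, so $\|\vv\|_2$ is of order $\ell\sqrt{n}$ typically. For the upper tail I would use a standard $\tfrac12$-net $\mathcal{N}$ of the sphere $\mathbbm{S}^{n-1}$ (of cardinality at most $9^n$) together with $\|\vv\|_2 \le 2\max_{\vx\in\mathcal{N}}\langle \vv,\vx\rangle$ and a union bound over the sub-Gaussian tails of $\langle\vv,\vx\rangle$, which yields $\mathbb{P}(\|\vv\|_2^2 \ge \ell^2 n\theta) \le 2\exp(-c' n\theta)$ for all $\theta$ above an absolute threshold $\theta_0$. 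When the coordinates of $\vv$ are independent, as in the application to Proposition~\ref{prop:db_friendly_trip}, this step is even cleaner: $\|\vv\|_2^2 = \sum_i v_i^2$ is a sum of independent sub-exponential variables and the bound follows directly from Bernstein's inequality.

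I would then combine the two estimates by splitting on the event $\{\|\vv\|_2^2 \le \ell^2 n\theta\}$ for a parameter $\theta \ge \theta_0$ to be chosen: on this event the conditional bound gives $2\exp(-c t^2/\theta)$ at the threshold $s = \ell^2\sqrt{n}\,t$, while the complementary event costs at most $2\exp(-c' n\theta)$. Optimizing the choice $\theta = \max\{\theta_0, t/\sqrt{n}\}$ balances the two terms: for $t \le \sqrt{n}$ one takes $\theta \asymp 1$ and obtains $e^{-C t^2}$, the norm-tail term being negligible since $n \ge t^2$, whereas for $t \ge \sqrt{n}$ the choice $\theta \asymp t/\sqrt{n}$ makes both terms of order $e^{-C\sqrt{n}\,t}$. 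Together these reproduce $e^{-C\min\{t^2,\sqrt{n}\,t\}}$, as claimed.

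The main obstacle is the large-$t$ (sub-exponential) regime: naive conditioning with a fixed bound $\|\vv\|_2 \lesssim \ell\sqrt{n}$ only recovers the Gaussian tail $e^{-Ct^2}$, and capturing the heavier $e^{-C\sqrt{n}\,t}$ behavior requires allowing $\|\vv\|_2$ to grow and carefully optimizing the trade-off above. In the fully general (dependent-coordinate) setting, the delicate point is establishing the sub-exponential upper tail of $\|\vv\|_2^2$ via the net argument; if one uses independence of the coordinates, the entire lemma reduces to a single application of Bernstein's inequality to $\langle\vu,\vv\rangle = \sum_i u_i v_i$, whose summands are independent, mean-zero, and sub-exponential with $\psi_1$-norm at most $\ell^2$, giving the result with $s = \ell^2\sqrt{n}\,t$ immediately.
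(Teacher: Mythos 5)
Your proposal is correct, but it takes a genuinely different route from the paper. The paper does not give a self-contained argument: it defers to the proof of \cite[Lemma 6]{jeong2023linear}, which treats $\langle \vu,\vv\rangle$ as a decoupled second-order chaos and invokes the Gaussian chaos comparison machinery from \cite{hdp} (i.e., a Hanson--Wright-type bound, with the mixed tail $\min\{t^2,\sqrt{n}\,t\}$ coming from the Frobenius and operator norms of the identity matrix). You instead condition on $\vv$, use the one-dimensional sub-Gaussian tail of $\langle\vu,\cdot\rangle$ at scale $\|\vv\|_2$, control the upper tail of $\|\vv\|_2$ by a net and union bound, and recover the two regimes by optimizing the truncation level $\theta=\max\{\theta_0,t/\sqrt{n}\}$; your bookkeeping there is right, including the crossover regime where $\exp(-C\sqrt{n}\,t)\le\exp(-Ct^2)$. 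Each approach buys something: the chaos-comparison route is a one-line citation, but in its standard form it needs the coordinates \emph{within} each vector to be independent, which is an assumption the lemma as stated does not make (it is satisfied where the lemma is used, in Proposition~\ref{prop:db_friendly_trip}); your conditioning argument is self-contained and works for arbitrary sub-Gaussian vectors in the marginal sense, so it actually proves the lemma under its stated hypotheses. Your closing observation is also correct and worth emphasizing: when the coordinates are independent, $\langle\vu,\vv\rangle=\sum_i u_iv_i$ is a sum of independent, mean-zero, sub-exponential variables with $\psi_1$-norm at most $\ell^2$, and Bernstein's inequality gives the stated bound in one step --- arguably the most elementary proof available in the regime where the paper applies the result.
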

The proof of Lemma~\ref{lem:orth_row} follows along the proof of \cite[Lemma 6]{jeong2025linear} based on the Gaussian Chaos comparison lemma from \cite{hdp}.
\begin{proof}[Proof of \cref{prop:db_friendly_trip}]
Suppose that the matrix $\alpha_n\mat{A}$ satisfies $(\delta,r)$-TensorRIP for some $\delta < 1$. Let $\va_{1i}^\intercal \otimes \va_{2i}^\intercal \otimes \dots \otimes \va_{di}^\intercal$ denote the $i$-th row of the matrix $\mat{A}$. Let $\tX = \va_{11} \out \va_{21} \out \dots \out \va_{d1}$ and  $\tY = \vy_1 \out \vy_2 \dots \out \vy_d$, where $\{\vy_i\}$'s are random vectors sampled i.i.d from $\mathcal{D}$. Note that, $\algoname{Vec}({\tX}) = \va_{11} \otimes \va_{21} \otimes \dots \otimes \va_{d1} $ and $\algoname{Vec}({\tY})= \vy_1 \otimes \vy_2 \dots\otimes \vy_d $. Note that both $\tX$ and $\tY$ are rank $1$ tensors and their norms $\|\tX\|_F^2 , \|\tY\|^2_F > q^{2d}n^d$.

We have 
\begin{equation}
\label{eq:low-bound}
{1+\delta} \geq \alpha_n^2 \frac{\|\mathcal{A}(\tX)\|_2^2}{\|\tX\|_F^2} \geq \alpha_n^2 \|\va_{11} \otimes \va_{21} \otimes \dots \otimes\va_{d1}\|_2^2  >  \alpha_n^2 q^{2d} n^d \quad \text{a.s.}
 \end{equation}

At the same time
\begin{align}\label{eq:up-bound} {1-\delta} \leq \alpha_n^2\frac{\|\mathcal{A}(\tY)\|_2^2}{\|\tY\|_F^2} &< \frac{\alpha_n^2}{q^{2d}n^d} \sum_{i=1}^{m} |\langle \va_{1i} \otimes \va_{2i} \otimes \dots \otimes \va_{di},\vy_1 \otimes \vy_2 \otimes \dots \otimes \vy_d\rangle|^2 \nonumber\\
&\leq \alpha_n^2 m\underset{i\in[m],j\in[d]}{\max} \frac{|\langle \va_{ji},\vy_j \rangle|^{2d}}{q^{2d}n^d}. \end{align}

For any $\phi > 0$ and $\varepsilon > 0$ and any compressive $m \le n^d$, by Lemma~\ref{lem:orth_row} and the union bound,
\begin{equation}\label{eq:angle} \mathbbm{P} (\mathcal{E}) := \mathbbm{P}\left( \underset{i\in[m],j\in[d]}{\max} \frac{|\langle \va_{ji},\vy_j\rangle|^{2d}}{n^d} > \ell^{4d}{\varepsilon}^{2d} n^{\phi d} \right)  \leq n^{d} d \exp\left(-{ C\varepsilon^2} n^{\phi}\right).
\end{equation}
Then,
\begin{align*}
\mathbbm{P} &\left[1 - \delta \le \frac{\|\mathcal{A}(\tZ)\|_F^2}{\|\tZ\|_F^2} \le 1 + \delta \text{ for all rank-1 tensors $\tZ$}\right] \\
&\le \mathbbm{P} \left[1 - \delta \le \frac{\|\mathcal{A}(\tY)\|_F^2}{\|\tY\|_F^2} \text{ and } \frac{\|\mathcal{A}(\tX)\|_F^2}{\|\tX\|_F^2} \le 1 + \delta \text{ and } \mathcal{E}^c\right] + \mathbbm{P} (\mathcal{E}) \\
&\overset{\eqref{eq:low-bound},\eqref{eq:up-bound}}{\le} \mathbbm{P}\left[\frac{1+\delta}{1 - \delta} >  \frac{n^dq^{4d}}{ m\varepsilon^{2d} n^{\phi d}\ell^{4d}}\right] + n^{d} d \exp\left(-{ C\varepsilon^2} n^{\phi}\right).
\end{align*}
So, for any $\phi > 0$, if $m \le n^{d(1 - \phi)}$, we can set $\varepsilon = (\frac{q}{\ell})^2 \left(\frac{1-\delta}{1+\delta}\right)^{1/{2d}}$ to get 
\begin{align*}\mathbbm{P} &\left[1 - \delta \le \frac{\|\mathcal{A}(\tZ)\|_2^2}{\|\tZ\|_F^2} \le 1 + \delta \text{ for all rank-1 tensors $\tZ$}\right] \\
&\le 0 + n^d d \exp\left(- C (\frac{q}{\ell})^4 \left(\frac{1-\delta}{1 + \delta}\right)^{1/d} n^\phi\right) \xrightarrow{n \rightarrow \infty } 0,
\end{align*}
which concludes the proof of Proposition~\ref{prop:db_friendly_trip}.
\end{proof}

\begin{remark} While \cref{prop:db_friendly_trip} is stated for face-splitting product of matrices with independent sub-Gaussian entries bounded away from zero, the result also holds under weaker assumptions, e.g., relaxing almost sure separation from zero. For instance, we can replicate this proof to show that a similar result is true for the face-splitting product of matrices $\mat{A}_i \in \R^{m \times n}$ whose rows are sampled uniformly i.i.d. from $\revised{\sqrt{n}\mathbbm{S}^{n-1}}$. 
\end{remark}

\subsection{Uniform geometry preservation for trimmed face-splitting measurements}

In this section, we show that local trimming of the measurements can help establish stronger theoretical RIP-like guarantees.
\begin{definition}\label{def:trimmed-matrix}[Trimmed matrix $\mat{A}_k^\vx$]
    Consider a measurement matrix $\mat{A} \in \R^{m \times n}$ \revised{with rows $\{\va_i^T\}_{i \in [m]}$,} and any $\vx \in \R^n$ and $m > k > 0$. 
    Let $y_i = \va_i^T \vx$, and $|y_{i_1}| \leq |y_{i_2}| \leq \dots \le |y_{i_m}|$, let $i_1, i_2, \dots, i_{m-k}$ be all the first $m-k$ indices. 
Then, the $j$-th row of the trimmed matrix $\mat{A}_k^\vx \in \R^{ (m - k) \times n}$  is given by
$$ \sqrt\frac{{m}}{m-k} \va_{i_j} \quad j \in [m-k]\;.$$ 
\end{definition}
When the value of $k$ is clear, we suppress it and denote the trimmed matrix w.r.t $\vx$ as $\mat{A}^x$ instead. \revision{The main} result of this section is the following theorem. We formulate it for the component matrices of the same shape for the \revision{sake of simplicity in} exposition.
\begin{theorem}
\label{theorem:trim_trip}
Let $\mat{A} = \frac{1}{\sqrt{m}} \mat{A}_1 \, \bullet \, \mat{A}_2 \, \bullet \dots \bullet \, \mat{A}_{d},$ where $\mat{A}_i \in \R^{m \times n}$ for $i \in [d]$ are independent random matrices with i.i.d. mean zero, variance one, sub-Gaussian entries. Let $\delta, \eta \in (0,1)$ and $\phi \in \left(0, Cdnr/\log(n)\right]$ for some absolute constant $C > 0$.

If 
\begin{align}
    &m \geq C_1 \delta^{-2}(r^d + dnr) \ln\left(\frac{n^{d/2} (d+1)}{\delta}\right) \quad \text{ for } \mathcal{S} = \mathcal{S}_\vr^{HOSVD}, \text{ or }\label{eq:order_m_hosvd} \\
    &m \geq C_2 \delta^{-2} dnr\ln\left(\frac{n^{3d/2}}{\delta}\right) \quad \text{ for } \mathcal{S} = \mathcal{S}_r^{CP}, \label{eq:order_m_cp}
\end{align}
 then with probability $1 - cn^{-\phi}$, 
 $$ \sup_{\tX \in \mathcal{S}}
    \left  \vert \|\mat{A}_k^\vx\vx\|_2^2 - 1 \right \vert \leq \delta\;,$$
  where $\vx = \algoname{Vec} (\tX)$ and $\mat{A}^{\vx}_k$ denotes a trimmed matrix $\mat{A}$ as per Definition~\ref{def:trimmed-matrix} with $k = \ln(8n^\phi)$.
Here, $C_1, C_2, C$ and $c \in \R^{+}$ are constants that can depend on the sub-Gaussian norm of the entries of $\mat{A}_i$'s. 
\end{theorem}

The proof of this theorem hinges on a known result about the trimmed mean estimator  for heavy-tailed distributions, which turn out to have sub-Gaussian properties.    Given  i.i.d. samples $\{x_i \in \R\}_{ i \in [n]}$ from a distribution, let $\hat{\mu}$ denote the sample mean and $\hat{\mu}_{k_1,k_2}$ denote the $(k_1,k_2)$-trimmed sample mean for some $k_1 + k_2 < n$. That is,
\begin{equation}
\label{eq:trim_mean} \hat{\mu} := \frac{1}{n} \sum_{i=1}^n x_i  \quad \text{ and } \quad \hat{\mu}_{k_1,k_2} := \frac{1}{n - k_1 - k_2} \sum_{i=k_1+1}^{n-k_2} x_{(i)},\end{equation}
where $x_{(1)} \leq x_{(2)} \dots \leq x_{(n)}$ denote the order statistics of the $\{x_i\}$. If $k_1 = 0$, we shall denote the trimmed mean estimator by $\hat{\mu}_{k_2}$ for brevity.

\begin{theorem}[\protect{\cite[Theorem 2.17]{rico2022optimal}}]
\label{thm: trim_sub-Gaussian}
Consider $p$ i.i.d random variables $x_1, x_2 \dots x_p$ in $\R$ that have been sampled from a non-negative distribution $\mathcal{D}$. Further, assume that the distribution has mean $\mu $ and variance $\sigma < \infty$. Then, there exist constants $C,c \in \R^+$ such that for any $\alpha \geq 4e^{-cp}$ we have
$$  \mathbbm{P}\left(| \hat{\mu}_{\revised{\ln(8/\alpha)}} - \mu | \leq C \sigma \sqrt{\frac{2\ln(4/\alpha)}{p}}\right) \geq 1 - \alpha$$
where \revised{$\hat\mu_{\ln(8/\alpha)} := \hat\mu_{k_1,k_2}$ is defined as in \eqref{eq:trim_mean} with $k_1 = 0$ and $k_2 = \ln(8/\alpha)$.}
\end{theorem}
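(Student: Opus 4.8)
The plan is to reduce the trimmed mean to a \emph{truncated} (Winsorized) mean at a deterministic quantile level, since truncation converts the heavy upper tail of a merely finite-variance distribution into bounded variables amenable to Bernstein/Bennett-type concentration. For the non-negative distribution $\mathcal{D}$ the only obstruction to sub-Gaussian behavior is the upper tail (the lower side is pinned at $0$), which is exactly what removing the top $k_2$ order statistics controls. Concretely, I would fix the population quantile $\tau = Q(1-\beta)$ with $\beta$ a small constant multiple of $k_2/p = \ln(8/\alpha)/p$, and write the target deviation $\hat{\mu}_{k_2} - \mu$ as a sum of three pieces: (i) the fluctuation of the empirical truncated mean $\frac{1}{p}\sum_i (x_i \wedge \tau)$ about its expectation; (ii) the deterministic truncation bias $\mu - \E[x \wedge \tau] = \E[(x-\tau)_+]$; and (iii) the mismatch between trimming at the random order statistic $x_{(p-k_2)}$ and truncating at the fixed level $\tau$, together with the renormalization factor $\tfrac{p}{p-k_2}$.

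For (i) the variables $x_i \wedge \tau$ lie in $[0,\tau]$ and have variance at most $\sigma^2$, so Bernstein's inequality gives a deviation of order $\sigma\sqrt{\ln(1/\alpha)/p} + \tau\,\ln(1/\alpha)/p$ with probability $1 - \alpha/\text{const}$. For (ii) I would bound the bias by Cauchy--Schwarz in centered form, $\E[(x-\tau)_+] \le \sqrt{\E[(x-\mu)^2]}\,\sqrt{\pr(x>\tau)} = \sigma\sqrt{\beta}$, which is of the desired order once $\beta \asymp \ln(1/\alpha)/p$. The one-sided Chebyshev bound $\tau - \mu \le \sigma/\sqrt{\beta}$ then shows the bounded term $\tau\,\ln(1/\alpha)/p$ in (i) is also $O(\sigma\sqrt{\ln(1/\alpha)/p})$. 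For (iii) I would show the empirical threshold concentrates: $\#\{i : x_i > \tau\}$ is $\mathrm{Binomial}(p,\beta)$ with mean $p\beta > k_2$, so a Chernoff bound forces $x_{(p-k_2)} \le \tau$ with probability $1 - \alpha/\text{const}$, giving the upper deviation $\hat{\mu}_{k_2} \le \tfrac{1}{p-k_2}\sum_i (x_i \wedge \tau)$; the matching lower bound uses the same concentration at a slightly lower quantile so that the dropped mass is accounted for.

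Combining the three bounds by a union over a constant number of high-probability events, and re-absorbing the constant loss in $\alpha$ into $C$, yields $|\hat{\mu}_{k_2} - \mu| \le C\sigma\sqrt{2\ln(4/\alpha)/p}$. The role of the hypothesis $\alpha \ge 4e^{-cp}$ is to guarantee $\ln(1/\alpha) \lesssim p$, so that the trimmed fraction $\beta$ is a genuinely small constant, the binomial concentration in (iii) is non-degenerate, and the quantile $\tau$ stays within $O(\sigma/\sqrt{\beta})$ of $\mu$.

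The hard part will be step (iii) and, more precisely, keeping every term at the single scale $\sigma\sqrt{\ln(1/\alpha)/p}$ while the trimming threshold is itself random and data-dependent: I must simultaneously rule out that $x_{(p-k_2)}$ falls below the level where the truncation bias is controlled and that it rises above the level where the heavy upper tail re-enters the retained sample. This two-sided control of a single random order statistic -- rather than a one-sided Chernoff estimate -- is what prevents the $\tau\cdot\ln(1/\alpha)/p$ and renormalization corrections from reintroducing a dependence on the scale of $\tau$ (hence on $\mu$), and is the technical crux that the finite-variance-plus-non-negativity structure is used to resolve.
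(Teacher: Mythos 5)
You are proving a statement the paper itself never proves: it is imported verbatim from the cited reference, so your argument can only be judged on its own terms. Your architecture -- truncate at a population quantile $\tau = Q(1-\beta)$ with $\beta \asymp \ln(1/\alpha)/p$, then control (i) a Bernstein fluctuation, (ii) the truncation bias, (iii) the random threshold $x_{(p-k_2)}$ -- is the standard route for trimmed-mean bounds, and several steps are right: $\mathrm{Var}(x\wedge\tau)\le\sigma^2$ (truncation is $1$-Lipschitz) and the Cauchy--Schwarz bias bound $\E[(x-\tau)_+]\le\sigma\sqrt{\beta}$. (In (iii) the direction is backwards as written: to force $x_{(p-k_2)}\le\tau$ you need the expected number of exceedances $p\,\mathbbm{P}(x>\tau)$ to sit \emph{below} $k_2$ with margin, not above it; that is cosmetic.) The genuine gap is the sentence claiming that the Bernstein range term $\tau\ln(1/\alpha)/p$ is $O\bigl(\sigma\sqrt{\ln(1/\alpha)/p}\bigr)$ because $\tau-\mu\le\sigma/\sqrt{\beta}$. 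Cantelli bounds $\tau-\mu$, not $\tau$: what you actually get is $\tau\beta\le\mu\beta+\sigma\sqrt{\beta}$, and the leftover term $\mu\ln(1/\alpha)/p$ cannot be absorbed into $\sigma\sqrt{\ln(1/\alpha)/p}$, since over non-negative distributions the ratio $\mu/\sigma$ is unbounded. For the \emph{upper} deviation this is repairable: one-sided Bernstein needs only the one-sided range $\tau-\E[x\wedge\tau]\le(\tau-\mu)+\sigma\sqrt{\beta}\lesssim\sigma/\sqrt{\beta}$, and the $\mu$-dependence cancels between the renormalization $p/(p-k_2)$ and the subtracted $k_2\,x_{(p-k_2)}$ term -- the cancellation you allude to at the end. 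But no repair exists for the \emph{lower} deviation: trimming from above cannot help when the sample contains unusually many small values, and the lower tail of a mean of non-negative, finite-variance variables is simply not sub-Gaussian at scale $\sigma$.

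Indeed, the statement as transcribed is false, so the gap is not patchable. Given the constants $C,c$, take $X\in\{0,1\}$ with $\mathbbm{P}(X=0)=q=1/(2C^2p)$, so that $\mu=1-q$, $\sigma\le 1/(C\sqrt{2p})$, and the claimed bound is at most $\sqrt{L}/p$ where $L:=\ln(4/\alpha)$. The event that the sample contains exactly $K=\lceil 2\sqrt{L}\rceil$ zeros has probability at least
$\tfrac12\,(2C^2K)^{-K}\ge\tfrac12\exp\bigl(-3\sqrt{L}\,\ln(6C^2\sqrt{L})\bigr)$,
which exceeds $\alpha=4e^{-L}$ once $L\ge L_0(C)$ (choosing $p\ge L/c$ keeps $\alpha\ge 4e^{-cp}$). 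On that event only ones get trimmed, so $\hat\mu_{k_2}=1-K/(p-k_2)$ and $\mu-\hat\mu_{k_2}\ge (K-1)/p\approx 2\sqrt{L}/p$, which exceeds the claimed bound. So any correct version of the theorem must retain an additional term of order $\mu\ln(1/\alpha)/p$ -- exactly the Bernstein term you tried to discard -- or assume a relation such as $\mu\lesssim\sigma\sqrt{p/\ln(1/\alpha)}$. Reassuringly, this does not break the paper downstream: in Lemma~\ref{lem:jl_trimming} one has $\mu=\|\vx\|_2^2$ and $m\ge C^d\delta^{-2}\ln(4p/\alpha)$, so an extra term $\mu\ln(4p/\alpha)/m\le \delta^2\|\vx\|_2^2/C^d$ is absorbed into the stated error $\delta\|\vx\|_2^2$.
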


This result applies to the face-splitting measurement matrices as follows.

\begin{lemma}
\label{lem:jl_trimming}
Let $\alpha, \delta \in (0,1)$ and $\mat{A} := \frac{1}{\sqrt{m}} \mat{A}_1 \, \bullet \, \mat{A}_2 \, \bullet \dots \bullet \, \mat{A}_{d},$ where all  $\mat{A}_i \in \R^{m \times n}$, $i \in [d]$ have i.i.d. mean zero, variance one, sub-Gaussian entries. 
Further, for any finite set $\{\tX_1, \tX_2 \ldots, \tX_p\} = \mathcal{S} \subseteq \R^{n \times n \times \dots \times n}$,  we consider their embedding with respect to the data-driven row trimming of $\mat{A}^{\vx_i}_k$ with $\vx_i = \algoname{Vec}(\tX_i)$ and $k = \ln(8/\alpha)$, 
 and $\alpha \geq 4e^{-cm}$. If $m \geq C^d \delta^{-2}\ln(4p/\alpha)$, then
$$\left|\left\|\mat{A}_{k}^{\vx_i} \vx_i\right\|_2^2 - \left\|\vx_i\right\|_2^2\right| \leq \delta  \left\|\vx_i\right\|_2^2\text{ for all } i \in [p]$$
with probability at least $1 - \alpha$.
Here, $C,\revised{c}\in \R^+$ that depends only on the sub-Gaussian norm of the entries of $\mat{A}_i$.
\end{lemma}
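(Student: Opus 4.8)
The plan is to reduce the claim, for each fixed $\vx_i$, to a concentration statement about the trimmed sample mean of i.i.d.\ non-negative scalars, and then to control the relevant variance through a moment bound for the Kronecker-structured rows. Fix $\vx = \vx_i = \algoname{Vec}(\tX_i)$ and write the $j$-th unscaled row of $\mat{A}$ as $\va_{1j}^\intercal \otimes \dots \otimes \va_{dj}^\intercal$, so that the $j$-th coordinate of $\sqrt{m}\,\mat{A}\vx$ is $Z_j := \langle \va_{1j}\otimes \dots \otimes \va_{dj},\, \vx\rangle$. Since the component matrices have i.i.d.\ rows and are independent across modes, the rows of $\mat{A}$ are independent, hence $\{Z_j\}_{j\in[m]}$ are i.i.d. Expanding in coordinates and using that all entries are mean zero, variance one, and independent gives $\E Z_j = 0$ and $\E Z_j^2 = \sum_{k_1,\dots,k_d}(\tX_i)_{k_1\dots k_d}^2 = \|\vx\|_2^2 =: \mu$. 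By Definition~\ref{def:trimmed-matrix}, removing the $k$ rows of largest $|\langle \mathrm{row}_j,\vx\rangle|$ and rescaling by $\sqrt{m/(m-k)}$ produces exactly
\[
\bigl\|\mat{A}^{\vx}_k \vx\bigr\|_2^2 \;=\; \frac{1}{m-k}\sum_{j=1}^{m-k} Z_{(j)}^2 \;=\; \hat{\mu}_k,
\]
where $Z_{(1)}^2 \le \dots \le Z_{(m)}^2$ are the order statistics; this is the $(0,k)$-trimmed sample mean of the non-negative i.i.d.\ samples $\{Z_j^2\}$, whose population mean is $\mu = \|\vx\|_2^2$. Thus the target inequality is precisely $|\hat\mu_k - \mu| \le \delta\mu$.

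The key technical step is to bound $\sigma^2 := \operatorname{Var}(Z_j^2) \le \E Z_j^4$ relative to $\mu^2 = (\E Z_j^2)^2$. Here $Z_j$ is a homogeneous multilinear form of degree $d$ in the independent sub-Gaussian entries of $\va_{1j},\dots,\va_{dj}$, i.e.\ a degree-$d$ sub-Gaussian chaos. I would establish
\[
\E Z_j^4 \;\le\; K^{d}\,(\E Z_j^2)^2 \;=\; K^{d}\mu^2
\]
for a constant $K$ depending only on the sub-Gaussian norm, by conditioning mode by mode: conditionally on $\va_{2j},\dots,\va_{dj}$ the variable $Z_j = \langle \va_{1j},\vw\rangle$ is sub-Gaussian with parameter $\lesssim \|\vw\|_2$, so $\E[Z_j^4\mid \va_{2j},\dots,\va_{dj}] \lesssim \|\vw\|_2^4$; since $\|\vw\|_2^2$ is itself a degree-$(d-1)$ chaos, iterating over the remaining modes contributes one constant factor per mode and yields the $K^d$ growth (alternatively one may invoke the standard hypercontractive moment comparison for sub-Gaussian chaos). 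This is the step I expect to be the main obstacle: it is where the heavy-tailedness of $Z_j$ -- a degree-$d$ chaos whose tail decays only like $\exp(-c t^{2/d})$, exactly the effect the trimming is designed to suppress and the cause of the untrimmed RIP failure in Proposition~\ref{prop:db_friendly_trip} -- is quantitatively converted into a bounded ratio $\sigma^2/\mu^2 \le K^d$, and it is the sole source of the $C^d$ factor in the sample-complexity requirement.

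With the variance bound in hand, I would apply the trimmed-mean concentration result Theorem~\ref{thm: trim_sub-Gaussian} to the non-negative samples $\{Z_j^2\}_{j\in[m]}$ (which has $m$ samples, mean $\mu$, and finite variance $\sigma^2$), at per-vector confidence level $\alpha/p$; this sets the deviation scale to $\sqrt{2\ln(4p/\alpha)/m}$, so that $|\hat\mu_k - \mu| \le C\sigma\sqrt{2\ln(4p/\alpha)/m}$ with probability at least $1-\alpha/p$, while the side condition $\alpha/p \ge 4e^{-cm}$ holds because $m \gtrsim \ln(p/\alpha)$. Substituting $\sigma^2 \le K^d\mu^2$, this deviation is at most $\delta\mu$ precisely when $m \ge C^d\delta^{-2}\ln(4p/\alpha)$, matching the hypothesis; note that only the scale-invariant ratio $\sigma/\mu$ enters, so the conclusion does not depend on $\|\vx\|_2$. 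Finally, a union bound over the $p$ tensors $\tX_1,\dots,\tX_p$ upgrades the per-vector estimate to the simultaneous bound for all $i\in[p]$ with total failure probability at most $\alpha$, which is the assertion of the lemma.
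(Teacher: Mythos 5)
Your proposal follows essentially the same route as the paper's proof: identify $\left\|\mat{A}_k^{\vx}\vx\right\|_2^2$ with the $(0,k)$-trimmed sample mean of the i.i.d.\ squared chaoses $Z_j^2$, bound their variance by $C^d\|\vx\|_2^4$ via a degree-$d$ chaos moment estimate (the paper cites the Khinchine-type inequality of \cite{ahle2019almost}, which is exactly the iterated-conditioning/hypercontractivity bound you sketch), then invoke Theorem~\ref{thm: trim_sub-Gaussian} and a union bound over the $p$ tensors. The only blemish — shared with the paper itself — is the bookkeeping of the trimming parameter: applying the trimmed-mean theorem at per-vector confidence $\alpha/p$, as you do, formally requires $k=\ln(8p/\alpha)$ rather than the stated $k=\ln(8/\alpha)$, whereas the paper applies it at level $\alpha$ (matching $k=\ln(8/\alpha)$) but then its union bound only yields failure probability $p\alpha$.
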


\revised{The proof of Lemma~\ref{lem:jl_trimming} relies on the following known bounds on the tails of the Kronecker product of independent random vectors.}
\revised{\begin{lemma}[\protect{\cite[Lemma 4.9]{ahle2020oblivious}}]
\label{lemma:GenKhintchine}
  Consider $\ell \geq 1$  and $k \in \mathbbm{Z}_{\geq 0}$. let  $\{\bm\sigma_{i} \in \R^{n_i}\}_{i \in [d]}$ be independent vectors that satisfy the Khintchine inequality $\| \langle \bm{\sigma}_i, \vx \rangle \|_\ell \leq C_\ell \|\vx\|_2 $ for any $\vx \in \R^{n_i}$. Then, for any vector $\vy \in \R^{n_1 n_2 \dots n_d}$,
  $$ \| \langle \bm\sigma_1 \otimes \bm\sigma_2 \otimes \dots \otimes \bm\sigma_d, \vy \rangle \|_\ell \leq C_\ell^d \|\vy\|^2.$$
\end{lemma}}

\revised{\begin{remark} 
\label{rem:khintchine}
Consider a random sub-Gaussian vector $\bm\sigma \in \R^n$ populated with i.i.d\revision{.} sub-Gaussian entries with sub-Gaussian norm $K$. Given any $\vx \in \R^n$, we have that $\|\langle \vx , \bm\sigma \rangle\|_{\psi_2} \leq C_1'K\|\vx\|_2$ for some absolute constant $C_1$. Then for any $\ell \geq 1$, by \cite[Proposition 2.6.6]{vershynin2018high}, $\|\langle \bm\sigma, \vx \rangle\|_\ell \leq C_2\sqrt{\ell}\|\vx\|_2$ for some constant $C_2$ that only depends on the sub-Gaussian norm $K$.
Then, for independent $\{\bm\sigma_{i} \in \R^{n_i}\}_{i \in [d]}$ consisting of i.i.d. sub-Gaussian entries and  $\ell \geq 1$, \cref{lemma:GenKhintchine} reduces to 
$$ \|\langle  \bm\sigma_1 \otimes \bm\sigma_2 \dots \otimes \bm\sigma_d, \vy \rangle\|_\ell\leq (C \ell)^{\frac{d}{2}}\|\vy\|_2,$$
for any $\vy$ in $\R^{n_1 n_2 \dots n_d}$ where $C$ is a constant that only depends on the sub-Gaussian norm $K$.
\end{remark}}

\begin{proof}[Proof of Lemma \ref{lem:jl_trimming}]
\revised{Let $\va_{kj}^T \in \R^n$ denotes the $j$th row of $\mat{A}_k$}. A direct check shows that
$$
  \mu := \E [  \langle \va_{1j} \otimes \va_{2j} \dots \otimes \va_{dj}, \vx \rangle^2 ] = \|\vx\|_2^2.
$$
for any $\vx \in \R^{n^d}$. 
Then, \revised{as a consequence of Khinchine inequality (see \cref{rem:khintchine})}, for any $\ell \ge 1$,
\begin{equation}\label{eqn:genkhint_ineq1}
(\E|\langle  \va_{1j} \otimes \va_{2j} \dots \otimes \va_{dj}, \vx \rangle|^{\revised{\ell}})^{1/{\revised{\ell}}}\leq (C \ell)^{\frac{d}{2}}\|\vx\|_2
\end{equation}
and
$$
\sigma^2 := \E  [  \langle \va_{1j} \otimes \va_{2j} \dots \otimes \va_{dj}, \vx \rangle^4 ] - \left(\E [  \langle \va_{1j} \otimes \va_{2j} \dots \otimes \va_{dj}, \vx \rangle^2 ] \right)^2  \le \left[ (4C)^{2d} - 1 \right]\|\vx\|_2^4.
$$
So, applying Theorem~\ref{thm: trim_sub-Gaussian} to the i.i.d. random variables $x_i$ such that
$$ \left\| \mat{A} \vx \right\|_2^2 = \frac{1}{m} \sum_{ j = 1}^{m}  \langle \va_{1j} \otimes \va_{2j} \dots \otimes \va_{dj}, \vx \rangle^2 =: \frac{1}{m} \sum_{i=1}^m x_i,$$
we observe that $\hat{\mu}_{k_2} = \mat{A}_{k_2}^{\vx}$ and

$$  \mathbbm{P}\left(\frac{| \hat{\mu}_{k_2} - \|\vx\|_2^2 |}{\|\vx\|_2^2} \leq C' ((4C)^{2d} - 1)^{1/2} \sqrt{\frac{2\ln(4/\alpha)}{m}}\right) \geq 1 - \alpha $$
for any $\alpha \geq 4e^{-cm}$. So, for $|\|\mat{A}_{k}^{\vx}\vx\|_2^2 - \|\vx\|_2|^2 \leq \delta$ with probability at least $\alpha > 0$, we would need
$$ m \geq C^d \delta^{-2} \ln(4/\alpha),$$
for some new constant $C>0$. The desired result follows \revision{from} the union bound argument. 
\end{proof}

\begin{remark}
Note that \revision{by} utilizing trimming, we \revision{can} reduce the dependence of $m$ on $\ln \frac{|{\mathcal{S}|}}{\eta}$ from order $d$ in Theorem~\ref{thm:jl_face_splitting} to a linear dependence in Lemma~\ref{lem:jl_trimming}. This reduction helps us to produce TensorRIP-like guarantees.
\end{remark}

\begin{proposition}\protect{\cite[Lemma 3.1]{vershynin2020concentration}} \label{prop:probabilistic_norm_bound_khatri}
Let $\va = \va_1 \kron \va_2 \kron \ldots \kron \va_d$ be a vector in $\R^{n^d}$ such that $\va_i \in \R^n$ are independent random vectors populated with independent, mean zero, unit variance, sub-Gaussian coordinates. Then, for any $0 \le t \le 2n^{d/2}$, we have
$$
\mathbbm{P} \left( \|\va\|_2 > n^{d/2} + t\right) \le 2 \exp\left(- \frac{ct^2}{d n^{d-1}}\right), 
$$
where $c$ is a positive absolute constant that might only depend on the sub-Gaussian norms.

\end{proposition}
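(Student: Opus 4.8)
The plan is to exploit the multiplicative structure of the Kronecker product, since $\|\va\|_2 = \prod_{i=1}^d \|\va_i\|_2$, and to convert this product into a \emph{sum} of independent sub-Gaussian fluctuations by taking logarithms. First I would invoke the standard concentration-of-norm bound (e.g.\ \cite[Theorem~3.1.1]{hdp}): because each $\va_i$ has independent, mean-zero, unit-variance sub-Gaussian coordinates with $\psi_2$-norm at most some $K$, the centered-at-$\sqrt n$ quantity $s_i := \|\va_i\|_2 - \sqrt n$ is sub-Gaussian with $\|s_i\|_{\psi_2} \le C K^2$. Moreover $\E s_i \le 0$, since $\E\|\va_i\|_2 \le (\E\|\va_i\|_2^2)^{1/2} = \sqrt n$ by Jensen's inequality.

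Next I would write $\|\va\|_2 = n^{d/2}\prod_{i=1}^d (1 + s_i/\sqrt n)$ and pass to logarithms. The key device is the elementary bound $\ln(1+u) \le u$, valid for all $u > -1$ (and $s_i/\sqrt n > -1$ almost surely, as $\|\va_i\|_2 \ge 0$), which yields
$$\|\va\|_2 \le n^{d/2}\exp\left(\tfrac{1}{\sqrt n}\sum_{i=1}^d s_i\right)$$
with \emph{no} need to restrict to a small-fluctuation regime. Consequently the event $\{\|\va\|_2 > n^{d/2}+t\}$ is contained in $\{W > \ln(1 + t/n^{d/2})\}$, where $W := \tfrac{1}{\sqrt n}\sum_{i=1}^d s_i$.

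It then remains to control the upper tail of $W$. Since the $s_i$ are independent, a Chernoff argument on $\sum_i s_i$ using the sub-Gaussian moment-generating-function estimate, together with $\E s_i \le 0$ (so the mean terms only help the upper tail), shows that $\mathbbm{P}(W > \lambda) \le \exp(-c'\lambda^2 n/(dK^4))$; that is, $W$ behaves like a centered sub-Gaussian with variance proxy of order $dK^4/n$, reflecting the correct $\sqrt d$-in-quadrature summation of the $d$ factors. Finally, for $0 \le t \le 2 n^{d/2}$ one has $t/n^{d/2}\in[0,2]$, and on this range $\ln(1+x) \ge c_0\, x$; so taking $\lambda = \ln(1 + t/n^{d/2})$ gives $\lambda^2 \ge c_0^2\, t^2/n^d$ and hence $\mathbbm{P}(\|\va\|_2 > n^{d/2}+t) \le 2\exp(-c\, t^2/(d n^{d-1}))$, with $c$ depending only on $K$. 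This is precisely where the hypothesis $t \le 2n^{d/2}$ enters.

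I expect the main obstacle to be exactly the tightness of the variance proxy. The target exponent scales as $t^2/(d n^{d-1})$, whereas the obvious strategy of forcing every factor $\|\va_i\|_2$ below a common threshold and union bounding over the $d$ factors costs an extra factor of $d$ in the exponent (and an extra prefactor). The logarithmic linearization followed by a \emph{single} sub-Gaussian sum is what recovers the optimal dependence; the only care needed is that $\ln(1+u)\le u$ lets us dispense with any truncation of the heavy upper tail, and that the slight negative bias $\E s_i \le 0$ is harmless, indeed favorable, for this one-sided estimate.
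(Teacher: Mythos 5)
Your proof is correct and follows essentially the same route as the proof of the cited result \cite[Lemma 3.1]{vershynin2020concentration} (the paper itself only cites it): factor the Kronecker norm as $\prod_i \|\va_i\|_2$, apply the concentration-of-norm theorem to each factor, linearize the product via $1+u \le e^u$, apply sub-Gaussian Hoeffding to the resulting sum, and use $t \le 2n^{d/2}$ to lower-bound $\ln(1+t/n^{d/2})$ by a constant multiple of $t/n^{d/2}$. No gaps worth noting.
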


\begin{proof}[Proof of Theorem~\ref{theorem:trim_trip}]
    Given a set $\mathcal{S} \subseteq \mathbbm{S}^{n^d-1}$, let $\mathcal{N} \subseteq \mathbbm{S}^{n^d-1}$ represent an $\varepsilon$-net over it for some $\varepsilon$ which will be determined later. For any $\vx \in \mathcal{S}$, let $\hat{\vx} =  \hat{\vx}(\vx) \in \mathcal{N}$ be such that $\|\vx - \hat{\vx}\|_2 \leq \varepsilon$. Let $\mat{A}^{\vx}$ and $\mat{A}^{\hat{\vx}}$ denote the $k$-trimming of $\vx$ and $\hat{\vx}$ respectively. Without loss of generality, we can also assume that the rows $\{\va^\vx_i\}$ and $\{\va^{\hat{\vx}}_i\}$ of $\mat{A}^{\vx}$ and $\mat{A}^{\hat\vx}$ respectively have been reordered such that,
    $$ |\mat{a}^{\vx}_1{\vx}| \leq |\mat{a}^{\vx}_2{\vx}| \le \dots \le |\mat{a}^{\vx}_{m-k}{\vx}| \text{ and } |\mat{a}^{\hat{\vx}}_1{\hat{\vx}}| \leq 
    |\mat{a}^{\hat\vx}_2{\vx}|_2 \le \dots \le |\mat{a}^{\hat\vx}_{m-k}{\hat\vx}| .$$

    Note that
     \begin{align*}
        \sup_{\vx \in \mathcal{S}}\left|\|\mat{A}^\vx \vx\|^2_2 - 1\right| &=     \sup_{\vx \in \mathcal{S}}\left|\|\mat{A}^\vx \vx -\mat{A}^{\hat{\vx}} \hat\vx + \mat{A}^{\hat{\vx}} \hat\vx \|_2^2 - 1\right| \\
        &\leq  \sup_{\vx \in \mathcal{S}}\|\mat{A}^\vx \vx -\mat{A}^{\hat{\vx}} \hat\vx \|_2^2 + \sup_{\hat \vx \in \mathcal{N}}\left|\|\mat{A}^{\hat{\vx}} \hat\vx\|_2^2 - 1\right| + 2\sup_{\vx \in \mathcal{S}}\|\mat{A}^\vx \vx -\mat{A}^{\hat{\vx}} \hat\vx\|_2\|\mat{A}^{\hat{\vx}} \hat\vx\|_2. 
    \end{align*}
    Now, note that (e.g., \cite[Lemma 2]{zhang2018median}), $\|\vu_{(k)} - \vv_{(k)}\| \leq \|\vu-\vv\|_\infty$ for any two vectors $\vu$ and $\vv$ and their $k$-th order statistics. So, we have
$$ \|\mat{A}^\vx \vx -\mat{A}^{\hat{\vx}} \hat\vx\|_2^2  \leq m\|\mat{A} \vx -\mat{A}\hat{\vx}\|_{\infty}^2 \leq m\max_i\|\va_i\|_2^2 \varepsilon^2,$$
where $\va_i$ is the $i$-th row of $\mat{A}$. So, from Proposition~\ref{prop:probabilistic_norm_bound_khatri} with $t = n^{d/2}$,
$$
\mathbbm{P} (\mathcal{E}_1) := \mathbbm{P}\left( \text{exists } i \in [m], \quad \|\va_i\|_2^2 > \frac{2 n^d}{m}\right) \le 2m \exp\left(-\frac{2cn}{d}\right).
$$
Then, from \cref{lem:jl_trimming} \revised{with $\alpha = n^{-\phi}$}, we know that 
         $$
         \mathbbm{P} (\mathcal{E}_2) := \mathbbm{P}(\sup_{\vy \in \mathcal{N}}
    \left \vert\|\mat{A}^\vy \vy\|_2^2 - 1 \right \vert > \delta) \le 1/n^{\phi}$$
    as long as \begin{equation}\label{eq:m-bound}
m \ge C^d \delta^{-2}\ln(4|\mathcal{N}|n^{\phi})   
    \end{equation} with  large enough $C \in \R^+$ \revised{and $\phi < C'dnr/\log(n)$ for some $C' \in \R^+$.}
Putting everything together, we have 
$$
\mathbbm{P}(\sup_{\vx \in \mathcal{S}}\left|\|\mat{A}^\vx \vx\|^2_2 - 1\right| > 3\delta) \le \mathbbm{P}(\mathcal{E}_1) + \mathbbm{P}(\mathcal{E}_2) + \mathbbm{P}\left(m \frac{2 n^d}{m}\varepsilon^2 + 2 \sqrt{1 + \delta} \sqrt{\frac{2 n^d}{m}}\varepsilon > 2 \delta\right).
$$
Using that $\delta \le 1$, the last term is zero, e.g., if we set $\varepsilon = \delta n^{-d/2}/3$. Further, this choice of $\varepsilon$ makes \eqref{eq:m-bound} hold for both low HOSVD and CP tensors due to the net estimates from Remark~\ref{lem:covering_numbers_tensors} and our choices for $m$ as per \eqref{eq:order_m_hosvd} and \eqref{eq:order_m_cp}. Further, since the chosen $m$ scales linearly with $n$, we also get that 
$
\mathbbm{P} (\mathcal{E}_1)\le cn^{-\phi}$ for some constant $c>0$.  This concludes the proof of Theorem~\ref{theorem:trim_trip}.
\end{proof}

\section{Adaptive iterative hard thresholding recovery methods}\label{sec:iht-algorithms}
\subsection{Algorithms}
\label{section:proposed_methods}
Based on the properties of tensorial measurements as studied above, we propose two approaches to improve the iterative hard thresholding algorithm for low-rank tensor recovery.

\textbf{Trimmed Tensor Iterative Hard Thresholding (TrimTIHT).}   \revised{Our trimming-based hard thresholding approach is inspired by the works that successfully used trimming to restore sub-Gaussian-like properties in heavy-tailed data \cite{lugosi2019mean, oliveira2024improved, catoni2018dimension, chen2015solving}, as well as in robust recovery problems in the presence of heavy-tailed corruptions \cite{zhang2018median, li2020non, gorbunov2020stochastic}. } 

In \cref{alg:trimtiht}, 
we replace the gradient step of the TIHT algorithm with a scaled gradient estimate calculated based on a subset of rows of $\mat{A}$, chosen adaptively \revision{for} the current iterate to enhance its geometry preservation properties.
\RestyleAlgo{ruled}
\begin{algorithm}[!ht]
\DontPrintSemicolon
\caption{TrimTIHT}\label{alg:trimtiht}
\SetKwInOut{Input}{Input}
\SetKwInOut{Output}{Output}
\SetKwInOut{Initialize}{Initialize}
\Input{\emph{Measurement matrix} $\mat{A} \in \R^{m\times n_1  n_2 \dots  n_d}$ \emph{with rows} $\revised{\va_{i}^T}$ for $i \in [m]$; \emph{target rank} $\textbf{r} \in \mathbbm{N}^d$ for HOSVD or $r \in \mathbbm{N}$ for CP decomposition; \emph{measurements} $\vb \in \R^m$ where $\vb = \mat{A} \cdot \algoname{Vec}(\tX^\ast) + \bm\eta$ with noise $\eta \in \R^m$ and \emph{step sizes} $\gamma,\lambda > 0$; \emph{trimming parameter} $m_{trim} \in \mathbbm{N}$.}

\Initialize{$\tX^1 \gets \mat{0} \in \R^{n_1 \times n_2 \dots \times n_d}$}
\For{$k = 1, 2, \ldots, K$}
{ 
$\vy = (y_1, y_2, \dots y_m)^\intercal = | \mathcal{A}(\tX^k) - \vb |$

$[ i_1, i_2, \dots i_m]$ = \algoname{Sort}$(\vy)$ \tcp*{Returns indices of $\vy$ sorted in ascending order}

$\Theta_k = \left[i_1, i_{2}, \dots, i_{m_{\text{trim}}} \right]$

  $\algoname{Vec}(\tY^{k+1}) \gets \algoname{Vec}(\tX^{k}) + \mu_k\frac{m}{|\Theta_k|} \sum\limits_{ j \in \Theta_k}  (b_j-\va_{j}^\intercal \algoname{Vec}(\tX^k)) \va_j$\;

    $\tX^{k+1} = T_{\mathbf{r}}(\tY^{k+1})$\; 
}
\Output{$\tX{}^{K+1}$, a rank $\vr$ tensor that estimates $\tX^\ast$}
\end{algorithm}

Consider the trimmed and rescaled measurements, namely, 
\begin{equation} \label{def:truncated-A} \mat{A}_k 
= \sqrt{\frac{m}{m-m_{\text{trim}}}} \mat{A}[\Theta_k,:], \qquad \mat{b}_k  
= \sqrt{\frac{m}{m-m_{\text{trim}}}} \vb[\Theta_k,:]. \end{equation}

Then, Step 5 of \cref{alg:trimtiht} can be expressed as
\begin{equation}\label{eq:trim-matversion} 
\algoname{Vec}(\tY^{t+1}) = \algoname{Vec}(\tX^{t}) + \mu_k \mat{A}_t^T(\vb_t - \mat{A}_t\algoname{Vec}(\tX^t)).
\end{equation}
This can alternatively be viewed as a gradient update with respect to the loss function $\frac{1}{2}\|\mat{A}_k \cdot - \vb_k\|^2$ starting at the point $\tX^k$ with step-size $\mu_k$. Note that, in the noiseless case $(\eta = 0)$, the trimmed matrix at iteration $k$, $\mat{A}_k$ corresponds to $\mat{A}_{m_{trim}}^{\algoname{Vec}(\tX^k - \tX^*)}$ as per Definition~\ref{def:trimmed-matrix}. It is important to note that we are able to perform trimming here as we have access to $\mathcal{A}(\tX^k - \tX^*) = \mathcal{A}(\tX^k) - \vb$ even though we do not have prior knowledge of $\tX^*$.  We provide convergence guarantees for Algorithm~\ref{alg:trimtiht} in Section~\ref{subsection:trimtiht}.

\textbf{Kaczmarz Tensor Iterative Hard Thresholding (KaczTIHT).} The second proposed method is a multi-order extension of the recently proposed \cite{zhang2015iterative, jeong2025linear} Kaczmarz Iterative Hard Thresholding method. Below, we show that \revision{its} linear convergence properties, as well as its practical advantage, persist in the tensor setting.   The details are presented in Algorithm \ref{alg:kziht}. 

\RestyleAlgo{ruled}
\begin{algorithm}[hbt!]
\caption{KaczTIHT}\label{alg:kziht}
\SetKwInOut{Input}{Input}
\SetKwInOut{Output}{Output}
\SetKwInOut{Initialize}{Initialize}
\Input{\emph{Measurement matrix} $\mat{A} \in \R^{m\times n_1  n_2 \dots  n_d}$ \emph{with rows} $\revised{\va_{i}^T}$ \revised{for $i \in [m]$}; \emph{target rank} $\textbf{r} \in \mathbbm{N}^d$ for HOSVD or $r \in \mathbbm{N}$ for CP decomposition; \emph{measurements} $\vb \in \R^m$ where $\vb = \mat{A} \cdot \algoname{Vec}(\tX^\ast) + \bm\eta$ with noise $\eta \in \R^m$ and ; \emph{step sizes} $\gamma,\lambda > 0$.

}
\Initialize{$\tX_1^1 \gets \mathbf{0} \in \R^{n_1 \times n_2 \dots \times n_d}$}

\For{$k = 1, 2, \dots, K$}{
$\tau \gets$ permutation of $[m]$\;
  \For{$j = 1, 2, \dots, m$}{
    $\algoname{Vec}({\tX_{j+1}^k} )\gets \algoname{Vec}(\tX_j^k) + \gamma \frac{\left(\vb_{\tau(j)}-\va_{\tau(j)}^\intercal \algoname{Vec}(\tX_j^k)\right) \va_{\tau(j)}}{\|\va_{\tau(j)}\|_2^2}$\tcp*{$m$ Kaczmarz steps}
  }
  $\tU^k = \tX_1^{k} + \lambda(\tX_{m+1}^k - \tX_1^k)$; \\
    $\tX_{1}^{k+1} = T_{\mathbf{r}}(\tU^k)$ \tcp*{Rank-$\vr$ hard thresholding step} 
}
\Output{$\tX{}_1^{K+1}$, a rank $\vr$ tensor that estimates $\tX^\ast$}
\end{algorithm}

While related techniques\revision{,} like Kaczmarz soft thresholding \cite{chen2021regularized} for tensor recovery\revision{,} employ the thresholding step after every Kaczmarz update, numerical evidence shows that this \revision{approach} is ineffective when a hard thresholding operator is used.  \revision{Furthermore}, numerical experiments also suggested that periodic thresholding every $\frac{m}{p}$ iterations, for some period $p$, as utilized in \cite[Algorithm 2]{jeong2025linear}\revision{,} did not help improve convergence. Also, random sampling with replacement offers similar, yet slightly slower\revision{,} rates of convergence when compared to reshuffling. 

Another related method is the stochastic iterative hard thresholding algorithm for low-rank HOSVD recovery 
(\cite{grotheer2020stochastic}), where the full gradient step is replaced by a block stochastic gradient descent step followed by hard thresholding. However, in StoTIHT \cite{grotheer2020stochastic}, the projection blocks are themselves large so that each of \revision{them} satisfies RIP, which is crucially different from our setting. \revised{ We provide convergence guarantees for Algorithm~\ref{alg:kziht} in Section~\ref{subsection:kacztiht}.}

\revised{\subsection{Computational costs} 
Let us briefly compare the computational complexity of the proposed methods \revision{with that of} the baseline TIHT (defined by \eqref{eq:iht-grad},\eqref{eq:iht-fit}). In all three methods, the cost of the thresholding step remains the same as in \eqref{eq:iht-fit}. Hence, we restrict our comparison to the cost \revision{of} the gradient update. The gradient update step in TIHT requires $O(m\prod_in_i)$ flops. For TrimTIHT (\cref{alg:trimtiht}), the cost of computing $\tY_k$ can be broken down into the cost of sorting to determine the row indices to keep, and a gradient update step resulting in $O(m\prod_i n_i + m\log(m))$ flops. For KaczTIHT, each Kaczmarz step requires $O(\prod_in_i)$ flops. To compute the estimate $\tU_k$ in \cref{alg:kziht} we use $m$ such Kaczmarz steps\revision{,} bringing the total to $O(m\prod_i n_i)$ flops. Given that $m \ll \prod_i n_i$, the three costs are compatible. We note that\revision{,} due to the efficiency of matrix computations, empirically on large-scale data, TrimTIHT\revision{, as} realized via equation \eqref{eq:trim-matversion}, performs faster per iteration than KaczTIHT.}

\subsection{Analysis of TrimTIHT}
\label{subsection:trimtiht}
First, we derive convergence guarantees for the TrimTIHT algorithm. Theorem~\ref{thm:main_path} below provides \revision{a} deterministic analysis of the tensor iterative hard thresholding algorithm, \revision{which} can be applied to the full measurement matrix $\mat{A}$ as well as to its trimmed\revision{,} iteration-adapted version $\mat{A}_t$. In Remark~\ref{rem:trimtiht-discussion}, we discuss the result and explain why it suggests that we can improve convergence by controlling the embedding norm distortion of tensors locally. In turn, Theorem~\ref{theorem:trim_trip} above guarantees that in the interesting case of face-splitting measurements, local trimming does successfully control local distortion of the norm. 

\begin{theorem}[Analysis of  TrimTIHT] \label{thm:main_path} Consider a tensor $\tX^\ast \in \R^{n \times n \dots \times n}$ of HOSVD rank $\vr = (r,r, \dots,r)$ (or CP rank $r$). Let $\tR^t = \tX^t - \tX^\ast$ be the residuals of the TrimTIHT \cref{alg:trimtiht} applied to the linear measurements $ \vb = \mathcal{A}(\tX^\ast)$. Recall that the dynamic of \cref{alg:trimtiht} is defined by a trimmed sub-matrix $\mat{A}_t$ of the matricization $\mat{A}  \in \R^{m \times n^d}$ of the measurement operator $\mathcal{A}$, defined as per \eqref{def:truncated-A}. 
Then, 
\begin{gather}
    \label{eqn:conv_path}
\|\tR^{t+1}\|_F \leq \left( 2\sqrt{ 1 - (2 - \mu_t\rho_t )\mu_t\Delta_t}+\sqrt{ 2\xi_t + \xi_t^2}\left(1 + \mu_t \|\mat{A}_t\|_{2} \sqrt{\Delta_t}\right) \right)^{t+1} \|\tR^0\|_F,
\end{gather}
where
\begin{align}
\label{eqn:delta}
&\Delta_t := \frac{\|\mat{A}_t\vr^t\|_2^2}{\|\vr^t\|_2^2} , \text{ where } \vr^t := \algoname{Vec}(\tR^t) \text{ for any } t \ge 1\\
\label{eqn:rho}
&\rho_t := \frac{\|\mat{A}_t \vu_t \|_2^2}{\|\vu_t\|_2^2}, \text{ where } \vu_t := P_{\Omega_t} (\mat{A}_t^T\mat{A}_t\vr^t) \text{ and } \Omega_t := span\{\vr^t, \vr^{t+1}\},\\
\text{ and } &\xi_t \text{ is such that } \|\tX^{t+1} - \tY^{t} \|_F \leq (1 + \xi_t) \| \tY^t -\tX^\ast\|_F. 
\end{align}
Here, $P_{\Omega_t}$ denotes the orthogonal projection operator onto a linear subspace $\Omega_t$.
\end{theorem}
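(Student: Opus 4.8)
The plan is to reduce the claim to a single-step recursion $\|\tR^{t+1}\|_F \le \kappa_t\|\tR^t\|_F$, where $\kappa_t$ is the parenthesized factor in \eqref{eqn:conv_path}, and then chain it over iterations. Writing $\vr^t = \algoname{Vec}(\tR^t)$ and $\vy^{t+1} := \algoname{Vec}(\tY^{t+1}-\tX^\ast)$, the first step is to unfold the gradient update. In the noiseless case $\vb = \mathcal{A}(\tX^\ast)$, the trimmed measurement obeys $\vb_t = \mat{A}_t\algoname{Vec}(\tX^\ast)$, so $\vb_t - \mat{A}_t\algoname{Vec}(\tX^t) = -\mat{A}_t\vr^t$ and the pre-thresholding iterate becomes
\[ \vy^{t+1} = (I - \mu_t\mat{A}_t^T\mat{A}_t)\vr^t. \]

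The heart of the argument is to study this dynamic after projecting onto $\Omega_t = \mathrm{span}\{\vr^t,\vr^{t+1}\}$, the subspace already appearing in \eqref{eqn:rho}. Since $\vr^t \in \Omega_t$, I would show $P_{\Omega_t}\vy^{t+1} = \vr^t - \mu_t\vu_t$ with $\vu_t = P_{\Omega_t}(\mat{A}_t^T\mat{A}_t\vr^t)$ exactly as in \eqref{eqn:rho}, and expand
\[ \|P_{\Omega_t}\vy^{t+1}\|_2^2 = \|\vr^t\|_2^2 - 2\mu_t\langle \vr^t,\vu_t\rangle + \mu_t^2\|\vu_t\|_2^2. \]
Self-adjointness of $P_{\Omega_t}$ and $\vr^t\in\Omega_t$ give $\langle\vr^t,\vu_t\rangle = \|\mat{A}_t\vr^t\|_2^2 = \Delta_t\|\vr^t\|_2^2$, while the identity $\|\vu_t\|_2^2 = \langle\mat{A}_t\vr^t,\mat{A}_t\vu_t\rangle$ combined with Cauchy--Schwarz and the definitions \eqref{eqn:delta},\eqref{eqn:rho} yields $\|\vu_t\|_2^2 \le \rho_t\Delta_t\|\vr^t\|_2^2$. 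Substituting gives the core contraction $\|P_{\Omega_t}\vy^{t+1}\|_2 \le \sqrt{1-(2-\mu_t\rho_t)\mu_t\Delta_t}\,\|\vr^t\|_2$.

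Next I would transfer this from the projected iterate to the true residual via the thresholding guarantee. Starting from $\|\tX^{t+1}-\tY^{t+1}\|_F^2 \le (1+\xi_t)^2\|\vy^{t+1}\|_2^2$ and the expansion $\|\tX^{t+1}-\tY^{t+1}\|_F^2 = \|\vr^{t+1}\|_2^2 - 2\langle\vr^{t+1},\vy^{t+1}\rangle + \|\vy^{t+1}\|_2^2$, and using $\langle\vr^{t+1},\vy^{t+1}\rangle = \langle\vr^{t+1},P_{\Omega_t}\vy^{t+1}\rangle$ (as $\vr^{t+1}\in\Omega_t$), I obtain
\[ \|\vr^{t+1}\|_2^2 \le 2\langle\vr^{t+1},P_{\Omega_t}\vy^{t+1}\rangle + (2\xi_t+\xi_t^2)\|\vy^{t+1}\|_2^2. \]
Applying Cauchy--Schwarz to the first term and solving the quadratic inequality $a^2 \le 2ab+c^2$ in $a = \|\vr^{t+1}\|_2$ (via $a \le b+\sqrt{b^2+c^2}\le 2b+c$) produces $\|\vr^{t+1}\|_2 \le 2\|P_{\Omega_t}\vy^{t+1}\|_2 + \sqrt{2\xi_t+\xi_t^2}\,\|\vy^{t+1}\|_2$; this is precisely the source of both the leading factor $2$ and the coefficient $\sqrt{2\xi_t+\xi_t^2}=\sqrt{(1+\xi_t)^2-1}$. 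Bounding $\|\vy^{t+1}\|_2 \le \|\vr^t\|_2 + \mu_t\|\mat{A}_t^T\mat{A}_t\vr^t\|_2 \le (1+\mu_t\|\mat{A}_t\|_2\sqrt{\Delta_t})\|\vr^t\|_2$ and combining with the core contraction gives $\kappa_t$, which I would then chain over $t$.

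I expect the main obstacle to be the projection bookkeeping: verifying $P_{\Omega_t}\vy^{t+1} = \vr^t - \mu_t\vu_t$ and that replacing $\vy^{t+1}$ by its projection inside $\langle\vr^{t+1},\cdot\rangle$ is lossless, noting that $\vr^{t+1}$ is itself defined only after the thresholding step being controlled. The one genuinely nonobvious estimate is $\|\vu_t\|_2^2 \le \rho_t\Delta_t\|\vr^t\|_2^2$, since it upgrades the naive descent term $-2\mu_t\Delta_t$ to the sharper $-(2-\mu_t\rho_t)\mu_t\Delta_t$; the rest is triangle inequality, Cauchy--Schwarz, and the operator-norm bound. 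One caveat to flag is the literal $(t+1)$-st power in \eqref{eqn:conv_path}: because $\Delta_t,\rho_t,\xi_t$ vary with $t$, iterating the one-step bound honestly yields the product $\prod_{s=0}^{t}\kappa_s$, so the stated form is to be read with the per-step factors bounded uniformly (or the exponent interpreted as telescoping).
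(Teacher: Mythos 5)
Your proposal is correct, and its skeleton is the same as the paper's: expand the thresholding guarantee into the cross term $2\langle \tR^{t+1}, \tY - \tX^\ast\rangle$ plus $(2\xi_t+\xi_t^2)\|\tY-\tX^\ast\|_F^2$, control the cross term by projecting the pre-thresholding residual onto $\Omega_t$, bound the second term by $(1+\mu_t\|\mat{A}_t\|_2\sqrt{\Delta_t})\|\tR^t\|_F$, and resolve the resulting quadratic inequality in $\|\tR^{t+1}\|_F$. The one place where you genuinely diverge is the key estimate $\|\vu_t\|_2^2 \le \rho_t\Delta_t\|\vr^t\|_2^2$: the paper obtains this (as Lemma~\ref{lem:rho_dist}) by a descent-type argument, applying the restricted-smoothness inequality of Lemma~\ref{lemma:strong_convexity} at the cleverly chosen point $\vx - \tfrac{1}{\rho_t}P_{\Omega_t}(\mat{A}_t^T\mat{A}_t\vr^t)$ and invoking nonnegativity of $f$, whereas you get it in two lines from self-adjointness of $P_{\Omega_t}$ (giving $\|\vu_t\|_2^2 = \langle \mat{A}_t\vu_t, \mat{A}_t\vr^t\rangle$) followed by Cauchy--Schwarz and the definitions of $\rho_t,\Delta_t$. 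Your route is more elementary and arguably cleaner, since it makes transparent that the definition of $\rho_t$ is exactly what makes the bound close; the paper's route has the side benefit of packaging the smoothness lemma for reuse (e.g.\ in Remark~\ref{rem:rho-delta-comp}). Similarly, your resolution of $a^2 \le 2ab + c^2$ via $a \le b + \sqrt{b^2+c^2} \le 2b+c$ replaces the paper's $\alpha$-splitting trick with $(1-\alpha+\sqrt{\alpha})\ge 1$; both yield the same factor. Finally, your caveat about the exponent is well taken: chaining the one-step contraction honestly gives $\prod_{s=0}^{t}\kappa_s\,\|\tR^0\|_F$, and the paper's own proof indeed only establishes the one-step bound before asserting the power form, so the stated $(t+1)$-st power should be read exactly as you interpret it.
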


\begin{remark}\label{rem:trimtiht-discussion} Before proceeding to the proof of \cref{thm:main_path}, let \revised{us} discuss the result:
\begin{itemize}
    \item We focus on a version of \revision{the} exact low-rank recovery problem for brevity of the result. It is possible to obtain the noisy low-rank recovery version of Theorem~\ref{thm:main_path} via a similar analysis. 
    \item \textbf{On $\xi_t$:} From  \cref{thm:opt_hosvd_approx} and \cref{thm:opt_cp_approx}, we know that one can make $\xi_t$ arbitrarily small in the HOSVD-rank case, and in the CP-rank case under some additional boundedness assumptions. However, as previously noted, the algorithms we do use in practice for low-rank approximation might not have as good a guarantee. In the above theorem, we provide a flexible analysis \revision{that allows} the convergence rate \revision{to be} based on the level of low-rank approximation error at each iteration. 
    
    Moreover, if we assume that, in \revision{the} spirit of a known assumption for TIHT convergence (\cite{rauhut2017low}, see Theorem~\ref{thm:TIHT_hosvd} above),
$$\xi_t \leq \frac{16\min\{|1- \mu_t\Delta_t|^2\}}{17\left(1 + \|\mat{A}_t\|_{2} \mu_t \sqrt{\Delta_t}\right)^2},$$
 \eqref{eqn:conv_path} simplifies to
\begin{equation}
\label{eqn:conv_path_err}\|\tR^{t+1}\|_F
\le \left(2\sqrt{ 1 - 2\mu_t\Delta_t + \mu_t^2\rho_t \Delta_t  }+|1 - \mu_t\Delta_t|\right)\|\tR^t\|_F =: \alpha_t\|\tR^t\|_F .
\end{equation}
\item \textbf{On $\Delta_t, \rho_t$ and the effect of trimming:} From \eqref{eqn:conv_path} or  \eqref{eqn:conv_path_err} we can see that good convergence is ensured by $\mu_t \Delta_t \sim 1$ and $\mu_t \rho_t \sim 1$ for every $t$. As we do not have direct access to $\Delta_t$ (or $\rho_t$), choosing \revision{a} suitable iterate-dependent $\mu_t$ is nontrivial. On the other hand, focusing on the \emph{constant} step size $\mu$\footnote{Since one can rescale the operator $\mathcal{A}$, let \revised{us} assume a trivial step size $\mu = 1$.}, for a suitably chosen parameter $m = m(\delta)$ and $m_{\text{trim}}$, it is possible to theoretically ensure that $|1- \mu {\Delta_t}| \leq \delta$ for some  $0 <\delta < 1$ with high probability.

 \textbf{Connection to TensorRIP:} For example, it is straightforward to check that if the linear measurement operator $\mathcal{A}: \R^{n \times n \dots n} \rightarrow \R^{m}$ satisfies $(\delta,3\vr)$-TensorRIP, 
 $m_{trim} = 0$, then $|1 - \rho_t|, |1 - \Delta_t| < \delta$ and Theorem~\ref{thm:main_path} (in the simplified form \eqref{eqn:conv_path_err}) recovers the convergence guarantees for TIHT as long as $\delta < 0.106$.

\textbf{Implications for face-splitting measurements:} Moreover, in Theorem~\ref{thm:main_path}, $\Delta_t$ and $\rho_t$ can also be iterate-dependent, which  allows \revision{us} to incorporate the analysis of adaptive trimming. For instance, if $\mat{A}$ is given by the face-splitting product of bounded measurements (that do not satisfy TensorRIP as shown in Proposition~\ref{prop:db_friendly_trip}), \cref{theorem:trim_trip} applied to $\tY = \tX^t - \tX^*$ with a sufficiently large $m$ and $m_{trim} \approx \ln(8n)$ ensures that with high probability $| 1 - \Delta_t| < \delta$ for any $t$.

Note that $\rho_t$ is more challenging to control. Remark~\ref{rem:rho-delta-comp} below checks that we always have $\rho_t \ge \Delta_t$, it also  depends on ``the future residual" $\tR^{t+1}$, which in turn is determined by the thresholding approximation at iteration $t$ and is not directly controlled by the iteration-dependent trimming. Only in special cases, like when $\tR^{t+1}$ and $\tR^{t}$ are highly coherent, our theory yields that trimming would help suitably control the value of $\rho_t$ and consequently $\rho_t/\Delta_t$. 
Nonetheless, experimental evidence (see \cref{fig:trim_delta} below) suggests that trimming the rows helps us control $\rho_t$ as well, aiding convergence.

\begin{figure}[!ht]
\includegraphics[width =\linewidth]{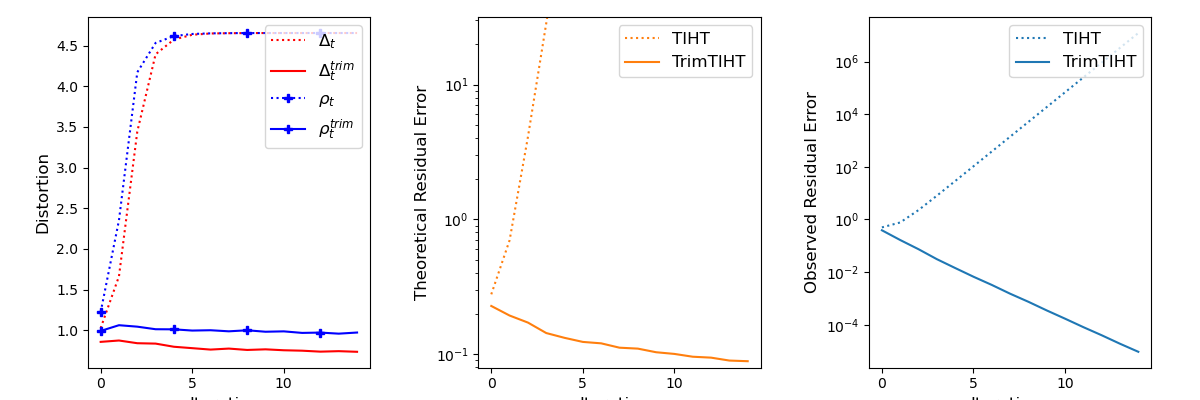}
\caption{Effect of trimming in the recovery of HOSVD rank-(2,2,2) tensor in $\R^{20 \times 20 \times 20}$ compressed to $m = 4250$ via the face-splitting product of sub-Gaussian measurements. As we can see from the plots, the observed residual error rate is better \revision{than} what the theory suggests.}
\label{fig:trim_delta}
\end{figure}
 
\item \textbf{On $\mu_t$: effect of step-size.} While the focus of this study is to understand the effect of trimming, the flexibility of the TrimTIHT analysis in \cref{thm:main_path} provides insight into why non-constant step-size $\mu_t$ can help even when TensorRIP guarantees fail, see Figure~\ref{fig:overlay_feas_vals}. The iteration dependent step-sizes are \revision{widely used} in practice, but the \revision{existing} theoretical analysis of such methods, including NTIHT~(Normalised Tensor Iterative Hard Thresholding) \cite{rauhut2017low}\revision{,} usually comes with strong TensorRIP requirements. 
\begin{figure}[ht]
\includegraphics[width=0.3
\linewidth]{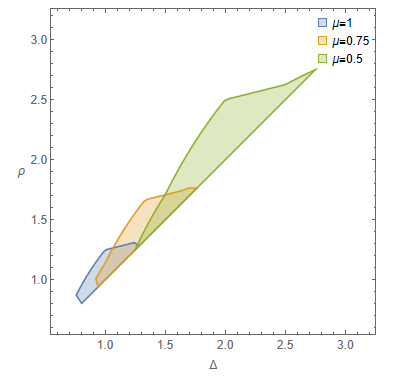}
\caption{Values of $\rho_t$ and $\Delta_t$, such that $ \alpha_t < 1$ for $\mu = 1, 0.75$ and $0.5$. For smaller step sizes, the range of acceptable parameters is \revision{larger} and also shifts to allow \revision{for} bigger distortion.} 
\label{fig:overlay_feas_vals}
\end{figure}
\end{itemize}
\end{remark}

Now we are ready to proceed with the proof of \cref{thm:main_path}, starting with the following key helper lemmas. 

\begin{lemma}
\label{lemma:strong_convexity}
 Let all quantities be as defined in the statement of \cref{thm:main_path}, in particular, $\mat{A} \vx^* =\vb$. For some $\vx$ and $\vy \in \R^{n^d}$, let $\rho > 0$ be such that
\begin{equation}\label{eq:lem3-cond} \|\mat{A}_t(\vx-\vy)\|_2^2 \leq \rho \|\vx-\vy\|_2^2.
\end{equation}
Then, for a function $f(\vz) := \|\mat{A}_t\vz - \vb_{\revised{t}}\|_2^2/2$, it holds
\begin{equation}
\label{eq:restricted_convexiry}
    f(\vy) \leq f(\vx) + \left\langle \mat{A}_t^T\mat{A}_t\left(\vx - \vx^* \right) , \vy-\vx\right\rangle +   \frac{\rho}{2}\|\vy-\vx \|_2^2. 
\end{equation}
\end{lemma}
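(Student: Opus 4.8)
The plan is to exploit the fact that $f(\vz) = \tfrac12\|\mat{A}_t\vz - \vb\|_2^2$ is a convex \emph{quadratic} in $\vz$, so that its second-order Taylor expansion around any base point is \emph{exact}, with no remainder. Since $\nabla f(\vz) = \mat{A}_t^T(\mat{A}_t\vz - \vb)$ and the Hessian is the constant matrix $\mat{A}_t^T\mat{A}_t$, for any $\vx,\vy \in \R^{n^d}$ we have the identity
$$f(\vy) = f(\vx) + \left\langle \nabla f(\vx), \vy - \vx\right\rangle + \frac12 (\vy-\vx)^T\mat{A}_t^T\mat{A}_t(\vy-\vx).$$

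First I would rewrite the gradient at $\vx$ using the (noiseless) interpolation condition $\mat{A}_t\vx^* = \vb$. This is consistent with the hypothesis $\mat{A}\vx^* = \vb$: because the trimmed right-hand side $\mat{b}_t$ and the trimmed operator $\mat{A}_t$ are assembled from the \emph{same} index set $\Theta_t$ (and the same rescaling), trimming both sides of $\mat{A}\vx^* = \vb$ preserves the equality. Substituting $\vb = \mat{A}_t\vx^*$ gives $\nabla f(\vx) = \mat{A}_t^T(\mat{A}_t\vx - \mat{A}_t\vx^*) = \mat{A}_t^T\mat{A}_t(\vx - \vx^*)$, which is exactly the vector appearing in the inner-product term of \eqref{eq:restricted_convexiry}.

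Next I would control the Hessian term. Since $(\vy-\vx)^T\mat{A}_t^T\mat{A}_t(\vy-\vx) = \|\mat{A}_t(\vy-\vx)\|_2^2 = \|\mat{A}_t(\vx-\vy)\|_2^2$, the standing hypothesis \eqref{eq:lem3-cond} bounds it directly by $\rho\|\vx-\vy\|_2^2$. Inserting the rewritten gradient into the exact Taylor identity and replacing the equality in the quadratic term by this upper bound yields precisely \eqref{eq:restricted_convexiry}.

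I do not expect a genuine obstacle here: the statement is a one-line \emph{restricted smoothness} (descent) inequality for a convex quadratic, proved by an exact Taylor expansion followed by a single application of \eqref{eq:lem3-cond}. The only point requiring care is bookkeeping — namely, confirming that the $\vb$ used to define $f$ is the trimmed right-hand side consistent with $\mat{A}_t$, so that $\mat{A}_t\vx^* = \vb$ holds and the cross term matches the claimed $\langle \mat{A}_t^T\mat{A}_t(\vx - \vx^*), \vy-\vx\rangle$ exactly rather than up to a nuisance error.
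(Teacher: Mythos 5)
Your proposal is correct and follows essentially the same route as the paper: the paper's proof integrates the gradient $\nabla f(\vz) = \mat{A}_t^T\mat{A}_t(\vz - \vx^*)$ along the segment from $\vx$ to $\vy$ via the fundamental theorem of calculus, which is precisely the exact second-order Taylor identity you invoke, and then applies \eqref{eq:lem3-cond} to the quadratic term exactly as you do. Your explicit remark that the trimmed system satisfies $\mat{A}_t\vx^* = \vb_t$ (since both sides are restricted to the same index set $\Theta_t$ with the same rescaling) is a bookkeeping point the paper leaves implicit, and it is handled correctly.
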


\begin{proof}
 Let $\chi(s) := \vx + s(\vy-\vx) $ for $s \in [0,1]$.  From the fundamental theorem of calculus, using that $\nabla f(\vz) = \mat{A}_t^T\mat{A}_t(\vz - \vx^*)$, we have
    \begin{align*} f(\vy) - f(\vx) = \int_0^1 (f\circ\chi)'(s) ds &= \int_0^1 \nabla f ( \chi(s)) \chi'(s) ds\\
    &= \int_0^{1} \left\langle \mat{A}_t^T\mat{A}_t\left(\vx + s(\vy-\vx) - \vx^* \right) , \vy-\vx\right\rangle ds\\
&= \left\langle \mat{A}_t^T\mat{A}_t\left(\vx - \vx^* \right) , \vy-\vx\right\rangle  +  \int_0^{1} \left\langle s \mat{A}_t^T\mat{A}_t\left(\vy-\vx \right) , \vy-\vx\right\rangle ds\\
&= \left\langle \mat{A}_t^T\mat{A}_t\left(\vx - \vx^* \right) , \vy-\vx\right\rangle +   \frac{1}{2}\|\mat{A}_t(\vy-\vx) \|_2^2 \\
& \overset{\eqref{eq:lem3-cond}}{\le} \left\langle \mat{A}_t^T\mat{A}_t\left(\vx - \vx^* \right) , \vy-\vx\right\rangle +   \frac{\rho}{2}\|\vy-\vx \|_2^2.
    \end{align*}
    This immediately gives \eqref{eq:restricted_convexiry}.
\end{proof}

\begin{lemma}
\label{lem:rho_dist}
   Let all quantities be as defined in the statement of \cref{thm:main_path}. 
    Then, 
   $$ \|P_{\Omega_t} (\mat{A}_t^T\mat{A}_t\vr^t) \|_F^2 \leq \rho_t\|\mat{A}_t\vr^t\|_F^2 \quad \text{ where } \vr^t := \algoname{Vec}(\tR^t).
   $$
\end{lemma}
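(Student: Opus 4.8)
The plan is to exploit two elementary facts about the orthogonal projection $P_{\Omega_t}$: that it is self-adjoint, and that the vector $\vu_t = P_{\Omega_t}(\mat{A}_t^T\mat{A}_t\vr^t)$ already lies in $\Omega_t$, so $P_{\Omega_t}\vu_t = \vu_t$. Together these let me rewrite $\|\vu_t\|_2^2$ so as to reintroduce $\mat{A}_t\vr^t$ and $\mat{A}_t\vu_t$, after which the definition \eqref{eqn:rho} of $\rho_t$ and a single application of Cauchy--Schwarz close the estimate. Throughout I would work with $\|\cdot\|_2$, since for the vectors $\vu_t$ and $\mat{A}_t\vr^t$ the Frobenius and $\ell_2$ norms coincide.

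Concretely, I would first expand $\|\vu_t\|_2^2 = \langle \vu_t, \vu_t\rangle = \langle P_{\Omega_t}(\mat{A}_t^T\mat{A}_t\vr^t), \vu_t\rangle$, then move the projection onto the second argument by self-adjointness and use $P_{\Omega_t}\vu_t = \vu_t$:
\[
\|\vu_t\|_2^2 = \langle \mat{A}_t^T\mat{A}_t\vr^t, P_{\Omega_t}\vu_t\rangle = \langle \mat{A}_t^T\mat{A}_t\vr^t, \vu_t\rangle = \langle \mat{A}_t\vr^t, \mat{A}_t\vu_t\rangle.
\]
Applying Cauchy--Schwarz to the last inner product gives $\|\vu_t\|_2^2 \le \|\mat{A}_t\vr^t\|_2\,\|\mat{A}_t\vu_t\|_2$. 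Substituting $\|\mat{A}_t\vu_t\|_2 = \sqrt{\rho_t}\,\|\vu_t\|_2$, which is exactly \eqref{eqn:rho}, yields $\|\vu_t\|_2^2 \le \sqrt{\rho_t}\,\|\mat{A}_t\vr^t\|_2\,\|\vu_t\|_2$; dividing by $\|\vu_t\|_2$ and squaring produces the claimed bound.

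There is essentially no hard step here, so rather than an obstacle the main things to state carefully are the bookkeeping that $P_{\Omega_t}$ is self-adjoint and acts as the identity on $\vu_t \in \Omega_t$, and the degenerate case $\vu_t = 0$ (for which $\rho_t$ is undefined and there is nothing to prove, the left-hand side being zero). It is worth remarking that the specific form $\Omega_t = \mathrm{span}\{\vr^t, \vr^{t+1}\}$ is never used — the identity $P_{\Omega_t}\vu_t = \vu_t$ holds for projection onto any subspace containing $\vu_t$ — so the lemma is in fact a general statement about the projected direction $\mat{A}_t^T\mat{A}_t\vr^t$, and this abstraction is what makes the estimate so short.
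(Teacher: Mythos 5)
Your proof is correct, and it takes a genuinely different route from the paper's. You argue directly: using that $P_{\Omega_t}$ is self-adjoint and fixes $\vu_t \in \Omega_t$, you get $\|\vu_t\|_2^2 = \langle \mat{A}_t\vr^t, \mat{A}_t\vu_t\rangle$, and then Cauchy--Schwarz plus the definition \eqref{eqn:rho} of $\rho_t$ finishes the estimate. The paper instead routes through its restricted-smoothness Lemma~\ref{lemma:strong_convexity} (proved via the fundamental theorem of calculus): it evaluates the least-squares loss $f(\vz) = \tfrac12\|\mat{A}_t\vz - \vb\|_2^2$ at the trial point $\vx - \rho_t^{-1}\vu_t$, verifies that the hypothesis \eqref{eq:lem3-cond} holds along the direction $\vu_t$ precisely because of the definition of $\rho_t$, and then invokes $f \geq 0$ to extract the inequality. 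The two arguments are close cousins --- the paper's use of non-negativity of $f$ amounts to expanding $0 \le \|\mat{A}_t\vr^t - \rho_t^{-1}\mat{A}_t\vu_t\|_2^2$, which is essentially the textbook proof of the Cauchy--Schwarz step you invoke, and the paper also silently uses the same self-adjointness identity when it replaces $\langle \mat{A}_t^T\mat{A}_t\vr^t, \vu_t\rangle$ by $\langle P_{\Omega_t}(\mat{A}_t^T\mat{A}_t\vr^t), \vu_t\rangle$ --- but your version is shorter, self-contained, and more careful: you explicitly dispose of the degenerate case $\vu_t = 0$ (where $\rho_t$ is undefined and both proofs would otherwise divide by it), and you correctly observe that the specific structure $\Omega_t = \mathrm{span}\{\vr^t,\vr^{t+1}\}$ is irrelevant to the lemma. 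What the paper's longer detour buys is narrative economy at the level of the whole section: Lemma~\ref{lemma:strong_convexity} is machinery it reuses in the surrounding convergence analysis, so deriving Lemma~\ref{lem:rho_dist} from it keeps the argument within one framework.
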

\begin{proof}
The definition of $\rho_t$ \eqref{eqn:rho} implies that  \eqref{eq:lem3-cond} holds with $$\vx = \algoname{Vec}(\tX^t) \text{ and } \vy = \vx - \frac{1}{\rho_t} P_{\Omega_t}(\mat{A}_t^T\mat{A}_t\vr^t).$$ We can apply Lemma~\ref{lemma:strong_convexity} as follows: 
\begin{align*}
f\big(\vx-& \frac{1}{\rho_t} P_{\Omega_t} (\mat{A}_t^T\mat{A}_t\vr^t)\big) \\
 &\leq f(\vx) - \left\langle \mat{A}_t^T\mat{A}_t\vr^t, \frac{1}{\rho_t} P_{\Omega_t} (\mat{A}_t^T\mat{A}_t\vr^t) \right \rangle + \frac{\rho_t}{2} \left\|\frac{1}{\rho_t}P_{\Omega_t} (\mat{A}_t^T\mat{A}_t\vr^t)\right\|_2^2\\
&\leq f(\vx) - \left\langle P_{\Omega_t}  (\mat{A}_t^T\mat{A}_t\vr^t), \frac{1}{\rho_t} P_{\Omega_t}  (\mat{A}_t^T\mat{A}_t\vr^t) \right \rangle + \frac{1}{2\rho_t} \left\|P_{\Omega_t} (\mat{A}_t^T\mat{A}_t\vr^t)\right\|_2^2\\
  &\leq \frac{1}{2} \| \mat{A}_t\vr^t\|_2^2 - \frac{1}{2\rho_t} \left\|P_\Omega (\mat{A}_t^T\mat{A}_t\vr^t)\right\|_2^2,
  \end{align*}
  where in the last step we observe that $ f(\algoname{Vec}(\tX^t)) = \frac{1}{2} \|\mat{A}_t\tX^t - \mat{A}_t\tX^*\|_2^2 = \frac{1}{2} \| \mat{A}_t\algoname{Vec}(\tR^t)\|^2$.
  Now, since $f(.) \ge 0$ we can conclude from above 
  $$
  \|P_{\Omega_t} (\mat{A}_t^T\mat{A}_t\algoname{Vec}(\tR^t)) \|_2^2 \leq \rho_t\|\mat{A}_t\algoname{Vec}(\tR^t)\|_2^2. 
  $$
\end{proof}

\begin{proof}[Proof of \cref{thm:main_path}]
From the definition of $\xi_t$, we can conclude that for $\tR^{t+1} = \tX^{t+1} - \tX^*$, 
$$
 \| \tR^{t+1} + \tX^* - \tY^t \|_F^2 \leq (1 + 2\xi_t + \xi_t^2) \| \tY^t - \tX^\ast \|_F^2,
 $$
 and so
$$\| \tR^{t+1}  \|_F^2 \leq 2 \underbrace{\langle \tR^{t+1}, \tY^t - \tX^* \rangle}_{\text{\tt{Term 1}}}  + ( 2\xi_t + \xi_t^2) \underbrace{\| \tY^t - \tX^\ast \|_F^2}_{\text{\tt{Term 2}}}.
$$

\textbf{Bounding \tt{Term 1}:} \text{Let }$\Omega_t' = span\{\tR^{t+1}\} \subseteq span\{\algoname{Vec}(\tR^t),\algoname{Vec}(\tR^{t+1})\} = \Omega_t$ and recall that $ \algoname{Vec}(\tY^t - \tX^\ast) =  (\mat{I} - \mu_t \mat{A}_t^T\mat{A}_t) \vr^t$, where $\vr^t := \algoname{Vec}(\tR^t)$. We can bound
\begin{align*}
\langle \tR^{t+1} , \tY^t - \tX^* \rangle  &= \|\tR^{t+1}\|_F \|P_{\Omega'_t}\left((\mat{I} - \mu_t\mat{A}_t^T\mat{A}_t)\vr^t\right)\|_2 \\
&\overset{\Omega'_{t} \subseteq \Omega_t}{\leq} \|\tR^{t+1}\|_F\|P_{\Omega_t}\left((\mat{I} - \mu_t\mat{A}_t^T\mat{A}_t)\vr^t\right)\|_2\\
&\overset{(a)}{\leq}\|\tR^{t+1}\|_F \sqrt{\|\vr^t\|_2^2  + \rho_t \mu_t^2 \|\mathcal{A}_t(\tR^t)\|_2^2 - 2 \mu_t \langle \vr^t, P_{\Omega_t}(\mat{A}_t^T\mat{A}_t\vr^t
) \rangle} \\
&\leq \|\tR^{t+1}\|_F\sqrt{\|\vr^t\|_2^2  + \rho_t \mu_t^2\|\mat{A}_t\vr^t\|_2^2 -2\mu_t\|\mat{A}_t\vr^t\|_2^2}   \\
&\overset{(b)}{\leq}\|\tR^{t+1}\|_F \sqrt{\|\tR^t\|_F^2 \left(  1  - (2 -\mu_t\rho_t )\mu_t\Delta_t \right)}.
\end{align*}

Here, inequality $(a)$ follows from  Lemma~\ref{lem:rho_dist} and $(b)$ follows from the definition of $\Delta_t$ in \eqref{eqn:delta}. 

\textbf{Bounding \tt{Term 2}:}
\begin{align*}
    \| \algoname{Vec}(\tY^t - \tX^\ast) \|_2  &\leq \|\tR^t\|_F + \mu_t \|\mat{A}_t^T\mat{A}_t \revised{\vr^t}\|_2\\
    &\leq \|\tR^t\|_F + \mu_t\|\mat{A}_t\|_2\|\mat{A}_t \revised{\vr^t}\|_2\\
    &= \left(1 + \mu_t\|\mat{A}_t\|_{2} \sqrt{\Delta_t}\right)\|\tR^t\|_F\;.
\end{align*}

Combining it all, we get that
$$ \|\tR^{t+1}\|_F^2 \leq 2\sqrt{  1 - (2 - \mu_t\rho_t )\mu_t\Delta_t }\|\tR^{t+1}\|_F\|\tR^t\|_F +  \revised{(2\xi_t + \xi_t^2)}\left(1 + \|\mat{A}_t\|_{2} \mu_t \sqrt{\Delta_t}\right)^2\|\tR^t\|_F^2.$$
This implies that there exists $0 \leq \alpha \leq 1 $ such that  
\begin{align*} 
&(1- \alpha) \|\tR^{t+1}\|_F^2 \leq 2\sqrt{ 1 - (2 - \mu_t\rho_t )\mu_t\Delta_t  }\|\tR^{t+1}\|_{\revised{F}}\|\tR^t\|_F\;.\\
&\alpha \|\tR^{t+1}\|_F^2 \leq ( 2\xi_t + \xi_t^2)\left(1 + \|\mat{A}_t\|_{2} \mu_t\sqrt{\Delta_t}\right)^2\|\tR^t\|_F^2\;. 
\end{align*}
Combining these gives us,
\begin{align*}
\left( 1 - \alpha + \sqrt{\alpha} \right) \|\tR^{t+1}\|_F &\leq \left( 2\sqrt{  1 - (2 - \mu_t\rho_t )\mu_t\Delta_t  }+ \sqrt{2\xi_t + \xi_t^2}\left(1 + \mu_t\|\mat{A}_t\|_{2} \sqrt{1 + \delta_t}\right) \right) \|\tR^t\|_F\;.
\end{align*}
However, the function $(1-\alpha + \sqrt{\alpha} )\geq 1$ on the interval $[0,1]$. So,
\begin{align*}
\|\tR^{t+1}\|_F 
\leq \left( 2\sqrt{ 1 - (2 - \mu_t\rho_t )\mu_t\Delta_t  }+\sqrt{ 2\xi_t + \xi_t^2}\left(1 + \mu_t \|\mat{A}_t\|_{2} \sqrt{\Delta_t}\right) \right) \|\tR^t\|_F,
\end{align*}
which concludes the proof of Theorem~\ref{thm:main_path}.
\end{proof}

\begin{remark} \label{rem:rho-delta-comp} Lemma~\ref{lem:rho_dist} also implies that $\rho_t \geq \Delta_t$ for any $t \ge 1$. Indeed, note that the norm of the projection of $\mat{A}_t^T\mat{A}_t\vr^t$ onto $\Omega = span\{\vr^t,\vr^{t+1}\}$ is not smaller than the norm of its projection onto the subspace $\Omega' = span\{\vr^t\}$. Thus,
\begin{align*} \rho_t \|\mat{A}_t\vr^t\|_2^2 &\geq \|P_\Omega(\mat{A}_t^T\mat{A}_t\vr^t)\|_2^2 \geq \|P_{\Omega'}(\mat{A}_t^T\mat{A}_t\vr^t)\|_2^2 \\ &\geq \left\| \langle \vr^t, \mat{A}_t^T\mat{A}_t\vr^t \rangle \frac{ \vr^t}{\|\vr^t\|_2^2}\right\|_2^2\geq\frac{\|\mat{A}_t\vr^t\|_2^4}{\|\vr^t\|_2^2} \revised{= \Delta_t \|\mat{A}_t\vr^t\|_2^2}\;.
\end{align*}
This gives us \revised{{$\rho_t \geq \Delta_t$}}.

\end{remark}

\subsection{Analysis of KaczTIHT}
\label{subsection:kacztiht}
Next, we give the convergence result for KaczTIHT. In Theorem~\ref{thm:main} below, we show that the algorithm \revision{exhibits} an exponential convergence rate given good Gaussian-like measurements and \revision{an} efficient tensor low-rank fitting subroutine. We formalize these assumptions as follows:

\textbf{\emph{Assumption 1}: (Distribution)} Let the matrix $\mat{A} \in \R^{m \times n^d}$ of a linear measurement operator $\mathcal{A}: \R^{n \times n \dots \times n} \rightarrow \R^m$ satisfy the following properties: 
its rows $\{\va_i^T\}_{i \in [m]} \subset \revised{\sqrt{n^d}\mathbbm{S}^{n^d-1}}$ are random vectors sampled independently from a mean zero, isotropic sub-Gaussian distribution with a sub-Gaussian norm $\ell$. 

\medskip

\revised{The assumption that all the modes of the tensor have the same dimension $n$ is made for simplicity of exposition. Further, we can always properly rescale the rows of $\mat{A}$. However,} the sub-Gaussian assumptions are quite strong. As a result, our subsequent theoretical guarantees \revised{hold for} classes of standard RIP-measurements, such as rescaled Gaussian measurements. \revised{Also}, even stronger results can be similarly concluded for bounded orthogonal measurement ensembles, such as those based on randomly subsampled \revision{discrete} Fourier or Hadamard transforms --- similarly to the vector case in \cite{jeong2025linear} -- see Remark~\ref{rem:bos} below. However, explaining the empirically observed advantage of KaczTIHT over TIHT, especially on non-RIP measurements such as tensor-structured measurements as considered above, should be a very interesting avenue for future work. 

Again, a good low-rank fitting routine is essential for the successful thresholding step:

\textbf{\emph{Assumption 2}: } Assume that the thresholding operation $T_\vr$ used in \cref{alg:kziht} is such that
\begin{equation}
\label{eq:extra_assmpn_4}
\| \tU^k- \tX_1^{k+1} \|^2_F \leq (1+\xi)^2 \| \tU^k - \tX^{\ast}\|_F^2 \;,\\
\end{equation}
for
\begin{equation*}
\xi = \frac{\delta^2}{ 5\left(1 + \delta + \left\|\frac{1}{\sqrt{m}}\mat{A}\right\|_{2} \sqrt{1 + \delta}\right)^2}\;.
\end{equation*} Note that, if we use the algorithm described in Theorem~\ref{thm:opt_hosvd_approx} with $\varepsilon = \xi$, then \eqref{eq:extra_assmpn_4} automatically holds for the thresholding sub-routine in KaczTIHT \ref{alg:kziht}. The same is true for the low rank CP thresholding using Theorem~\ref{thm:opt_cp_approx} for a suitably chosen $\varepsilon$ under mild assumptions.

\begin{theorem}[Analysis of  KaczTIHT]  \label{thm:main} Consider a tensor $\tX^\ast \in \R^{n \times n \dots \times n}$ of HOSVD rank $\vr = (r,r, \dots,r)$ (or CP rank $r$). Let $\tR^t = \tX_1^t - \tX^\ast$ be the residuals of the KaczTIHT \cref{alg:kziht} applied to the linear measurements $ \vb = \mathcal{A}(\tX^\ast) + \boldsymbol{\eta}$, where $\revised{\boldsymbol{\eta} \in \R^m}$ is the measurement noise and $\mathcal{A}: \R^{n \times n \dots \times n} \rightarrow \R^m$ is a linear measurement operator satisfying Assumption 1. Assume that for some $\delta \in \left(0, \frac{1}{5}\right)$, the step-sizes $\gamma, \lambda >0$ in \cref{alg:kziht} are taken to be
\begin{gather}
\label{eq:parameters}
    \gamma \leq \frac{\delta}{2\sqrt{n^d} m \ell^2 \ln^2m}, \quad \lambda = \frac{n^d}{m\gamma}\;,
\end{gather}
and Assumption~2 on the thresholding operator holds. 

Then, as long as 
\begin{gather*}
        m \geq C' \delta^{-2} ((3r)^d + 9nrd)\ln(d)\ln^2(m) \ \text{for HOSVD rank $\vr$  or }\\
        m \geq C'  \delta^{-2} ((3r)^d + 3nrd)\ln(d\revised{nr})\ln^2(m) \ \text{for CP rank-$r$, }
        \end{gather*}
with probability $1 - (m^2+1)e^{-C \ln^2m}$, we have
\[ \|\tR^{k+1}\|_F \leq (5\delta)^{k+1} \|\tR^0\|_F + \frac{1}{m}\frac{\delta + 2}{1-5\delta}\left(\left( \frac{\delta}{2(\ell\ln m)^2} + \frac{\delta^2}{2n^d(\ell\ln m )^2}\right)\|\bm{\eta}\|_1 + 
    \|\mat{A}^\intercal \bm{\eta}\|_2 \right).\]
Here, $C, C'>0$ are absolute constants that depend on the sub-Gaussian norm $l$, $\mat{A}$ is the \revised{linear matrix corresponding to} the measurement operator $\mathcal{A}$.
\end{theorem}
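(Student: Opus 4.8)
The plan is to exploit the two-loop structure of \cref{alg:kziht} and reduce it, up to a controlled error, to a single tensor iterative hard thresholding step for the \emph{normalized} operator $\tilde{\mat{A}} := \frac{1}{\sqrt m}\mat{A}$, which --- unlike the face-splitting measurements of \cref{prop:db_friendly_trip} --- does satisfy TensorRIP under Assumption~1. Concretely, since the rows lie on $\mathbbm{S}^{n^d-1}(0,\sqrt{n^d})$ we have $\|\va_i\|_2^2 = n^d$, so in the noisy model $\vb_{\tau(j)} = \va_{\tau(j)}^\intercal\vx^\ast + \eta_{\tau(j)}$ each inner Kaczmarz update is the affine map
\begin{equation*}
\algoname{Vec}(\tX_{j+1}^k) - \vx^\ast = \left(\mat{I} - \tfrac{\gamma}{n^d}\va_{\tau(j)}\va_{\tau(j)}^\intercal\right)\bigl(\algoname{Vec}(\tX_j^k) - \vx^\ast\bigr) + \tfrac{\gamma}{n^d}\eta_{\tau(j)}\va_{\tau(j)},
\end{equation*}
where $\vx^\ast = \algoname{Vec}(\tX^\ast)$. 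Composing over one sweep $j = 1,\dots,m$ gives $\algoname{Vec}(\tX_{m+1}^k) - \vx^\ast = \mat{P}_k\,\vr^k + \ve_k$ with the ordered product $\mat{P}_k = \prod_j(\mat{I} - \tfrac{\gamma}{n^d}\va_{\tau(j)}\va_{\tau(j)}^\intercal)$, $\vr^k := \algoname{Vec}(\tR^k)$, and an accumulated noise vector $\ve_k$. The extrapolation $\tU^k = \tX_1^k + \lambda(\tX_{m+1}^k - \tX_1^k)$ then yields $\algoname{Vec}(\tU^k) - \vx^\ast = (\mat{I} + \lambda(\mat{P}_k - \mat{I}))\vr^k + \lambda\ve_k$, and the choice $\lambda = n^d/(m\gamma)$ is engineered precisely so that the first-order part becomes $\mat{I} - \frac1m\mat{A}^\intercal\mat{A} = \mat{I} - \tilde{\mat{A}}^\intercal\tilde{\mat{A}}$, i.e.\ a unit-step gradient move for $\tilde{\mat{A}}$.

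The crux is to show $\mat{P}_k = \mat{I} - \frac{\gamma}{n^d}\mat{A}^\intercal\mat{A} + \mat{E}_k$ with $\mat{E}_k$ negligible after scaling. Expanding the ordered product, the linear term $-\frac{\gamma}{n^d}\sum_j \va_{\tau(j)}\va_{\tau(j)}^\intercal = -\frac{\gamma}{n^d}\mat{A}^\intercal\mat{A}$ is \emph{independent of the permutation}, which is exactly what lets reshuffling work despite the within-sweep dependence of the factors. The remainder $\mat{E}_k$ collects all products of $p \ge 2$ rank-one factors; a product $\va_{\tau(j_1)}\va_{\tau(j_1)}^\intercal\cdots\va_{\tau(j_p)}\va_{\tau(j_p)}^\intercal$ has operator norm $\|\va_{\tau(j_1)}\|_2\,\|\va_{\tau(j_p)}\|_2\prod_s|\langle\va_{\tau(j_s)},\va_{\tau(j_{s+1})}\rangle|$, which I would bound using $\|\va_i\|_2^2 = n^d$ together with the near-orthogonality estimate of \cref{lem:orth_row}, $|\langle\va_i,\va_j\rangle| \lesssim \ell^2\sqrt{n^d}\ln m$, taken uniformly over all $\binom m2$ pairs by a union bound. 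This union bound is exactly the source of the $(m^2+1)e^{-C\ln^2 m}$ failure probability. Summing the resulting geometric series in $p$ and multiplying by $\lambda$, the step-size cap $\gamma \le \delta/(2\sqrt{n^d}\,m\ell^2\ln^2 m)$ is calibrated so that $\|\lambda\mat{E}_k\|_2 \lesssim \delta/\ln m \le \delta$; the same expansion applied to $\ve_k$ produces the leading noise contribution $\lambda\ve_k \approx \frac1m\mat{A}^\intercal\boldsymbol{\eta}$ plus the lower-order $\|\boldsymbol{\eta}\|_1$ corrections that appear in the statement.

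With these estimates, one sweep followed by thresholding reads
\begin{equation*}
\algoname{Vec}(\tU^k) - \vx^\ast = (\mat{I} - \tilde{\mat{A}}^\intercal\tilde{\mat{A}})\,\vr^k + \lambda\mat{E}_k\,\vr^k + \lambda\ve_k, \qquad \tX_1^{k+1} = T_{\vr}(\tU^k),
\end{equation*}
which is a genuine TIHT iteration for $\tilde{\mat{A}}$ up to the $O(\delta)$ perturbations just controlled. By Theorems~\ref{lem:trip_hosvd} and~\ref{theorem:trip_cp}, the stated lower bounds on $m$ (whose $(3r)^d$ and $nrd$ terms are exactly the $(\delta,3\vr)$-TensorRIP requirements at the inflated rank $3\vr$, with the extra $\ln^2 m$ inherited from the sweep analysis above) guarantee that $\tilde{\mat{A}}$ is $(\delta,3\vr)$-TensorRIP. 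Since the residuals $\tX_1^k - \tX^\ast$ have HOSVD/CP rank at most $2\vr$ while the thresholding interacts with tensors of rank up to $3\vr$, applying TensorRIP to $(\mat{I} - \tilde{\mat{A}}^\intercal\tilde{\mat{A}})\vr^k$ and invoking Assumption~2 --- whose slack $\xi$ is tuned to match --- reproduces the deterministic TIHT contraction exactly as in \cref{thm:TIHT_hosvd} or the path argument of \cref{thm:main_path}. Collecting the RIP distortion, the $\lambda\mat{E}_k$ error, and the thresholding slack gives a per-iteration factor bounded by $5\delta$, which is below $1$ precisely because $\delta < 1/5$; iterating and summing the geometric series of the noise terms then yields the claimed bound.

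The main obstacle is the control of the non-commuting, within-sweep-dependent product $\mat{P}_k$. Because the factors share the same (permuted) rows, they are neither independent nor simultaneously diagonalizable, so matrix concentration for sums does not apply directly; the workable route is the deterministic expansion above, where the only probabilistic input is the \emph{order-independent} pairwise near-orthogonality of the rows, made uniform over all pairs. Making the geometric summation over the expansion orders converge with the correct powers of $n^d$ and $\ln m$ --- and thereby pinning down the exact step-size and sample-complexity thresholds while tracking the noise through the partial products --- is the delicate bookkeeping that the remainder of the proof must carry out.
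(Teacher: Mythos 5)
Your proposal follows essentially the same route as the paper's proof: you expand the composed Kaczmarz sweep into the permutation-independent linear term $-\frac{\gamma}{n^d}\mat{A}^\intercal\mat{A}$ plus a higher-order remainder (the paper's $\mat{B}_\tau$ in Lemma~\ref{rem:kacziht-prelim}), control that remainder by pairwise near-orthogonality of the rows via Lemma~\ref{lem:orth_row} and a union bound over pairs (the paper's Lemma~\ref{lem:TRIP_and_bound_B_tau}, which is indeed the source of the $(m^2+1)e^{-C\ln^2 m}$ failure probability and of the step-size calibration), and then close the contraction by invoking $(\delta,3\vr)$-TensorRIP of $\frac{1}{\sqrt m}\mat{A}$ together with Assumption~2 and tracking the noise through the partial products to isolate the $\frac1m\mat{A}^\intercal\bm{\eta}$ and $\|\bm{\eta}\|_1$ terms. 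This matches the paper's argument step for step (the paper packages the RIP-based inner-product control as Lemma~\ref{lem:TRIP_innerproduct} and finishes with the same $5\delta$-per-iteration recursion), so your outline is correct in approach with only the acknowledged bookkeeping left to fill in.
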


Before proceeding to the proof of \cref{thm:main}, let \revised{us} take a closer look at the updates performed by \revision{the} inner relaxed Kaczmarz loop in the KaczTIHT \cref{alg:kziht} algorithm.\revised{ For this result, let $\va_i$ for $i = 1, \ldots, m$ to denotes the conjugate transpose of the rows of $\mat{A}$, and $\tau$ is the permutation of the rows chosen for the $k$-th epoch. Then, we can define $\mat{B}_{\tau(k), \mat{B}_\tau}$, and $\bm{\eta}_{err}$ as follows} --
\begin{align}\label{eq:b-def}
     \mat{B}_{\tau(k)} :=  \frac{\gamma^2}{n^{2d}} &\sum_{m - k + 1 \leq i_1 < i_2 \leq m} \va_{\tau(i_2)}\va_{\tau(i_2)}^\intercal \va_{\tau(i_1)}\va_{\tau(i_1)}^\intercal \\ &- \frac{\gamma^3}{n^{3d}}\sum_{m-k+1 \leq i_1 < i_2 < i_3 \leq m} \va_{\tau(i_3)}\va_{\tau(i_3)}^\intercal\va_{\tau(i_2)}\va_{\tau(i_2)}^\intercal \va_{\tau(i_1)}\va_{\tau(i_1)}^\intercal +  \dots, \nonumber,
\end{align}
$$\mat{B}_{\tau} := \mat{B}_{\tau(m)},$$
 and
$$\bm{\eta}_{err} :=\frac{\gamma}{n^d} \sum_{i=0}^{m-1} \bm{\eta}_{\tau(m-i)}\left( \mat{I}  - \sum_{j=m-i+1}^{m} \frac{\gamma}{n^d} \va_{\tau(j)}\va_{\tau(j)}^\intercal + \mat{B}_{\tau(i)}\right) \va_{\tau(m-i)}.$$
Further, let us $\mat{B}_{\tau} := \mat{B}_{\tau{m}}$ and $\va_i^T$, $\tau$
The next lemma is the result of a technical rewriting of Step 4 of the algorithm, essentially the same as in the vector case in \cite[Lemma 1]{jeong2025linear}:
\begin{lemma}\label{rem:kacziht-prelim}
    In the notations of Theorem~\ref{thm:main} and with \revised{$\va_i$, $\tau$ and $\mat{B}_{\tau}$ as defined above}, the iterates $\{\tX_i^j\}$ of KaczTIHT satisfy-
\begin{equation}
\label{eq:itr_inner}
\algoname{Vec}(\tX_{m+1}^k-\tX^{\ast}) = \left( \mat{I}-\frac{\gamma}{n^d}\mat{A}^\intercal \mat{A} + \mat{B}_\tau \right) \algoname{Vec}(\tX_1^k-\tX^{\ast}) \nonumber + \bm{\eta}_{err}. 
\end{equation}
\end{lemma}
\begin{proof}
Straightforward check, starting with
\begin{align*}
\algoname{Vec}(\tX_{m+1}^k-\tX^{\ast}) =& \left(\prod_{i = 1}^m (\mat{I} - \frac{\gamma}{n^d} {\va_{\tau(i)} \va_{\tau(i)}^\intercal }) \right)\algoname{Vec}(\tX_1^k-\tX^\ast)  \\ 
&+{\gamma \sum_{i=0}^{m-1}  \frac{\bm{\eta}_{\tau(m-i)}}{n^d} \va_{\tau(m-i)} \prod_{j=0}^{i-1} \left(\mat{I} - \frac{\gamma}{n^d} {\va_{\tau(m-j)} \va^\intercal_{\tau(m-j)}} \right)}\;. 
\end{align*}
\end{proof}
 
We need two more technical lemmas bounding the quantities involved:
\begin{lemma}[Bounding $\mat{B}_{\tau(k)}$]
\label{lem:TRIP_and_bound_B_tau}
Given a measurement operator $\mathcal{A}: \R^{ n \times n \times \dots \times n} \rightarrow \R^m$ and $\mat{B}_{\tau(k)}$ for any $k \in [m]$ defined as above \eqref{eq:b-def} and $0 < \delta < 1$, we have that for for any $k\in[m]$,
$$\lambda \|\mat{B}_{\tau(k)}\|_2 \leq \delta,$$ 
with probability at least $1-m^2 e^{-C \ln^2(m)}$ where $\lambda$ is as in \eqref{eq:parameters}.
\end{lemma}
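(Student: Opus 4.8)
The plan is to bound $\|\mat{B}_{\tau(k)}\|_2$ directly from its series definition \eqref{eq:b-def} by the triangle inequality, treating each multilinear summand separately. First I would observe that every summand telescopes: for strictly increasing indices $i_1 < i_2 < \cdots < i_j$ (all drawn from the last $k$ positions), grouping the interior factors into scalars gives
\[ \va_{\tau(i_j)}\va_{\tau(i_j)}^\intercal \cdots \va_{\tau(i_1)}\va_{\tau(i_1)}^\intercal = \Big(\prod_{a=2}^{j} \langle \va_{\tau(i_a)}, \va_{\tau(i_{a-1})}\rangle\Big)\, \va_{\tau(i_j)}\va_{\tau(i_1)}^\intercal. \]
Since the rows live on $\mathbbm{S}^{n^d-1}(0,\sqrt{n^d})$, the rank-one matrix $\va_{\tau(i_j)}\va_{\tau(i_1)}^\intercal$ has operator norm $\|\va_{\tau(i_j)}\|_2\|\va_{\tau(i_1)}\|_2 = n^d$, so the norm of each order-$j$ summand, including its prefactor $\gamma^j n^{-jd}$, is at most $\gamma^j n^{-jd}\,n^d\prod_{a=2}^j |\langle \va_{\tau(i_a)},\va_{\tau(i_{a-1})}\rangle|$.

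Next I would control all pairwise inner products simultaneously. Applying Lemma~\ref{lem:orth_row} in ambient dimension $n^d$ to the independent, mean-zero rows $\va_i,\va_j$ with $i\neq j$, taking $t \asymp \ln m$ and a union bound over the at most $m^2$ pairs, yields the event
\[ \mathcal{G} := \Big\{ |\langle \va_i, \va_j\rangle| \le \ell^2 n^{d/2}\ln m \ \text{ for all } i \ne j\Big\}, \qquad \mathbbm{P}(\mathcal{G}) \ge 1 - m^2 e^{-C\ln^2 m}, \]
where I use that in the relevant regime $\ln m \le n^{d/2}$ the exponent $\min\{t^2, n^{d/2}t\}$ equals $t^2$. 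Crucially, $\mathcal{G}$ is a statement about the fixed rows of $\mat{A}$ and does not depend on the permutation $\tau$, so once we condition on $\mathcal{G}$ the resulting bound holds for every $\tau$ and every $k\in[m]$.

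On $\mathcal{G}$ each chain of $j-1$ consecutive inner products is at most $(\ell^2 n^{d/2}\ln m)^{j-1}$; tracking the powers of $n$ collapses the per-term bound to $\gamma\,(\gamma\ell^2\ln m/n^{d/2})^{\,j-1}$. There are at most $\binom{k}{j}\le m^j$ index choices at order $j$, so summing over $j\ge 2$ produces a geometric series with ratio $q := m\gamma\ell^2\ln m/n^{d/2}$:
\[ \|\mat{B}_{\tau(k)}\|_2 \le \sum_{j\ge 2}\gamma m\, q^{\,j-1} = \gamma m\,\frac{q}{1-q}. \]
The choice of $\gamma$ in \eqref{eq:parameters} forces $q \le \delta/(2n^d\ln m) \le 1/2$, hence $\tfrac{q}{1-q}\le 2q$, and multiplying by $\lambda = n^d/(m\gamma)$ gives
\[ \lambda \|\mat{B}_{\tau(k)}\|_2 \le \lambda\cdot 2\gamma m q = 2 n^d q \le \frac{\delta}{\ln m} \le \delta, \]
as claimed.

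The main obstacle here is bookkeeping rather than conceptual: I must ensure the powers of $n$, $\gamma$, $\ell$ and $\ln m$ combine so that the ratio $q$ is genuinely small. This is precisely what the scaling of $\gamma$ in \eqref{eq:parameters} is engineered to achieve, so that each extra order contributes a factor $q\ll 1$, the series is dominated by its $j=2$ term, and the cubic and higher contributions are negligible. A secondary point requiring care is that the concentration event $\mathcal{G}$ be formulated over \emph{all} pairs of rows, independently of $\tau$, so that the conclusion holds uniformly over the random reshuffling used in KaczTIHT.
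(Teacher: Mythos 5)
Your proposal is correct and follows essentially the same route as the paper's proof: collapse each product of rank-one matrices into a chain of pairwise inner products times $\va_{\tau(i_j)}\va_{\tau(i_1)}^\intercal$, control all $\binom{m}{2}$ inner products at level $\approx \ell^2 n^{d/2}\operatorname{polylog}(m)$ via Lemma~\ref{lem:orth_row} and a union bound (giving the $1-m^2e^{-C\ln^2 m}$ probability), count the order-$j$ terms by $\binom{m}{j}\le m^j$, sum the resulting geometric series, and invoke the scaling of $\gamma$ and $\lambda$ in \eqref{eq:parameters}. Your explicit remark that the good event concerns only the fixed rows of $\mat{A}$ and hence holds uniformly over $\tau$ and $k$ is a nice clarification of a point the paper leaves implicit, but it does not change the argument.
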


\begin{proof}
We have that
\[ \| \va_{\tau(i_2)}\va_{\tau(i_2)}^\intercal \va_{\tau(i_1)}\va_{\tau(i_1)}^\intercal \|_2 \leq  \|\va_{\tau(i_2)}\|_2\|\va_{\tau(i_1)}\|_2  |\langle \va_{\tau(i_2)}, \va_{\tau(i_1)} \rangle|\leq n^d |\langle \va_{\tau(i_2)}, \va_{\tau(i_1)} \rangle| \;,\]
and,
\begin{align}
\| \va_{\tau(i_p)}\va_{\tau(i_p)}^\intercal \dots \va_{\tau(i_1)}\va_{\tau(i_1)}^\intercal \|_2 &\leq  \|\va_{\tau(i_p)}\|_2\|\va_{\tau(i_1)}\|_2  |\langle \va_{\tau(i_p)}, \va_{\tau(i_{p-1})} \rangle| \dots |\langle \va_{\tau(i_2)}, \va_{\tau(i_{1})} \rangle| \nonumber\\
&\leq n^d |\langle \va_{\tau(i_p)}, \va_{\tau(i_{p-1})} \rangle| \dots |\langle \va_{\tau(i_2)}, \va_{\tau(i_{1})} \rangle| \; .
\end{align}
Further, it follows from Lemma \ref{lem:orth_row} with $t = \ln(m)$ via a union bound argument that for any $i \neq j \in [m]$, with probability at least $1-m^2 e^{-C \ln^2(m)}$,
$$|\langle\va_i, \va_j \rangle| \leq \ell^2\sqrt{n^d} \ln^2m.$$
Here, $C$ is a positive absolute constant. Then,
\begin{align*} \left\|\frac{\gamma^p}{n^{dp}} \sum_{m-k+1 \leq i_1 < i_2 \dots i_p \leq m} \va_{\tau(i_p)}\va_{\tau(i_p)}^\intercal \dots \va_{\tau(i_2)}\va_{\tau(i_2)}^\intercal \va_{\tau(i_1)}\va_{\tau(i_1)}^\intercal\right\|_2 & \leq \frac{\gamma^p}{n^{dp}} \binom{m}{p} n^d (\ell^4 n^d \ln^4(m))^\frac{p-1}{2} \\ 
&\leq(\gamma m)^p \left( { \ell^4 \ln^4(m)} {n^{-d}} \right)^\frac{p-1}{2}\;.
\end{align*}
Thus, for any $k \in [m]$,
\[  \|\mat{B}_{\tau(k)}\|_2 \leq \sum_{p=2}^{m}  (\gamma m)^p \left( { \ell^4\ln^4(m)} {n^{-d}} \right)^\frac{p-1}{2}  \leq2 \gamma^2 m^2 \ell^2 \ln^2(m) n^{-d/2}.
 \]
Using \eqref{eq:parameters}, we can conclude that 
$\lambda \|\mat{B}_{\tau(k)}\|_2 \leq \delta$.

\end{proof}

\begin{lemma}
\label{lem:TRIP_innerproduct}
Given a measurement operator $\mathcal{A}: \R^{ n \times n \times \dots \times n} \rightarrow \R^m$  that satisfies $(\delta, 3\vr)$-TensorRIP, and HOSVD rank $\vr$ (or, CP rank $r$) tensors $\tX$, $\tY$ and $\tZ \in \R^{n \times n \times \dots \times n}$, we have that
$$ \left\vert \big\langle \left( \mat{I} - \mat{A}^\intercal \mat{A} \right)\algoname{Vec}(\tX - \tZ),   \algoname{Vec}(\tY - \tZ)\big\rangle \right \vert \leq \delta \|\tX - \tZ\|_F \|\tY - \tZ\|_F,$$
where $\mat{A} \in \R^{m \times n^d}$ corresponds to the operator $\mathcal{A}$.
\end{lemma}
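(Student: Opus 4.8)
The plan is to deduce this ``restricted inner product'' bound from the $(\delta, 3\vr)$-TensorRIP by a polarization argument, exactly as in the classical sparse and low-rank matrix settings. Write $\tU := \tX - \tZ$ and $\tV := \tY - \tZ$, set $\vu = \algoname{Vec}(\tU)$, $\vv = \algoname{Vec}(\tV)$, and regard the quantity to bound as the value $B(\vu,\vv)$ of the \emph{symmetric} bilinear form $B(\vw_1,\vw_2) := \langle (\mat{I} - \mat{A}^\intercal \mat{A})\vw_1, \vw_2\rangle$, noting that on the diagonal $B(\vw,\vw) = \|\vw\|_2^2 - \|\mat{A}\vw\|_2^2$. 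If either $\tU$ or $\tV$ vanishes the claim is trivial, so I would first normalize: replace $\tU,\tV$ by $\tU' := \tU/\|\tU\|_F$ and $\tV' := \tV/\|\tV\|_F$, reducing the problem to showing $|B(\vu',\vv')| \le \delta$ with $\|\vu'\|_2 = \|\vv'\|_2 = 1$, and then rescale at the very end by the factor $\|\tX-\tZ\|_F\,\|\tY-\tZ\|_F$.

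Next I would apply the polarization identity $B(\vu',\vv') = \tfrac14\big(B(\vu'+\vv',\,\vu'+\vv') - B(\vu'-\vv',\,\vu'-\vv')\big)$. The crucial structural observation is that each of $\tU' \pm \tV'$ is a fixed linear combination $a\tX \pm b\tY + c\tZ$ of the \emph{three} rank-$\vr$ tensors (with $a = 1/\|\tX-\tZ\|_F$, $b = 1/\|\tY-\tZ\|_F$, and $c$ the appropriate sum of these), and that both the HOSVD (multilinear) rank and the CP rank are subadditive. Hence $\tU' + \tV'$ and $\tU' - \tV'$ each have HOSVD rank at most $3\vr$ (respectively CP rank at most $3r$), which is exactly what makes the order-$3\vr$ hypothesis the right one. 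Consequently the TensorRIP of Definition~\ref{def:rip} applies to each diagonal term, giving $|B(\vw,\vw)| = \big|\,\|\vw\|_2^2 - \|\mat{A}\vw\|_2^2\,\big| \le \delta\,\|\vw\|_2^2$ for each $\vw \in \{\vu'+\vv',\ \vu'-\vv'\}$.

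Finally, combining these two estimates with the parallelogram law gives $|B(\vu',\vv')| \le \tfrac{\delta}{4}\big(\|\vu'+\vv'\|_2^2 + \|\vu'-\vv'\|_2^2\big) = \tfrac{\delta}{4}\cdot 2\big(\|\vu'\|_2^2 + \|\vv'\|_2^2\big) = \delta$, using the unit normalization in the last equality; undoing the normalization yields the stated bound. There is no genuine analytic obstacle here, and the only points that require care are bookkeeping in the polarization step: first, because $\tX,\tY,\tZ$ are three generally distinct low-rank tensors, the symmetrized arguments $\tU'\pm\tV'$ pick up rank from all three, so TensorRIP must be invoked at order $3\vr$ rather than $2\vr$; and second, one must normalize \emph{before} polarizing, since with unequal norms the parallelogram identity would produce $\tfrac12(\|\vu\|_2^2+\|\vv\|_2^2) \ge \|\vu\|_2\|\vv\|_2$ in place of the sharp product $\|\vu\|_2\|\vv\|_2$, losing the clean constant $\delta$.
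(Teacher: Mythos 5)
Your proof is correct and takes essentially the same approach as the paper's: normalize $\tX-\tZ$ and $\tY-\tZ$ to unit Frobenius norm, polarize, note that the sums and differences are linear combinations of the three rank-$\vr$ tensors and hence have rank at most $3\vr$ (respectively CP rank $3r$), apply the $(\delta,3\vr)$-TensorRIP to each diagonal term, and finish with the parallelogram law and the unit-norm assumption. The only cosmetic difference is that you organize the argument around an explicit symmetric bilinear form $B(\vw_1,\vw_2)=\langle(\mat{I}-\mat{A}^\intercal\mat{A})\vw_1,\vw_2\rangle$ and its polarization identity, whereas the paper writes the identical estimate directly as a bound on $\left|\langle\mat{A}\vz_1,\mat{A}\vz_2\rangle-\langle\vz_1,\vz_2\rangle\right|$.
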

\begin{proof}
Without loss of generality, we can assume that $\tZ_1 := \tX - \tZ $ and $\tZ_2 := \tY - \tZ$ are unit norm tensors, let $\vz_1 = \algoname{Vec}(\tZ_1)$ and $\vz_2 = \algoname{Vec}(\tZ_2)$. By the parallelogram law,
$$ \big| \left \langle \mat{A}\vz_1, \mat{A}\vz_2 \right \rangle - \left \langle \vz_1, \vz_2 \right \rangle\big| \leq \frac{1}{4} \big| \|\mat{A} (\vz_1 + \vz_2) \|^2 -   \|\mat{A} (\vz_1 - \vz_2) \|^2  - \|\vz_1 + \vz_2 \|^2 +  \|\vz_1 - \vz_2 \|^2\big|.$$
Now note that, \revision{$\tZ_1 + \tZ_2 = \tX + \tY -2\tZ$ and $\tZ_1 - \tZ_2 = \tX -\tY$} are HOSVD rank \revision{$3\vr$ and $2\vr$} (or equivalently CP rank \revision{$3r$ and $2r$}) tensors.  By using the fact that $\mat{A}$ satisfies $(\delta, \revision{3}\vr)$-TensorRIP we have
$$  \big| \left \langle \mat{A}\vz_1, \mat{A}\vz_2 \right \rangle - \left \langle \vz_1, \vz_2 \right \rangle\big| \leq \frac{\delta}{4} \left( \|\tZ_1 + \tZ_2\|_F^2 + \|\tZ_1 - \tZ_2\|_F^2 \right) \leq \delta,$$
applying the parallelogram law and the unit norm assumption in the last step. Rearranging, this concludes the proof of the lemma.
\end{proof}
With these results in place, we can proceed to prove the main theorem. 

\begin{proof}[Proof of \cref{thm:main}]
From Assumption 2, we know that,
\[ \|\tU^k-\tX_1^{k+1} \|^2_F =\|(\tU^k-\tX^\ast) + (\tX^\ast - \tX_1^{k+1}) \|^2_F \leq (1+\xi)^2 \| \tU^k-\tX^\ast\|_F^2\;.\]
which implies
\begin{gather*}
    \|\tX^\ast -\tX_1^{k+1}\|_F^2 \leq 2\underbrace{\langle \tU^k - \tX^\ast , \tX_1^{k+1} - \tX^\ast \rangle}_{\text{\tt{Term 1}}} + \ (2\xi + \xi^2)\underbrace{\|\tU^k - \tX^\ast\|_F^2}_{\text{\tt{Term 2}}}\;.
\end{gather*}
From Lemma~\ref{rem:kacziht-prelim}, we know that 
$$
\algoname{Vec}(\tX_{m+1}^k-\tX_1^k) = \left(- \frac{\gamma} {n^d}\mat{A}^\intercal \mat{A} + \mat{B}_\tau\right) \algoname{Vec}(\tX_1^k-\tX^{\ast}) \nonumber + \bm{\eta}_{err}. $$
Also, $  \tU^k = \tX_1^{k} + \lambda(\tX_{m+1}^k - \tX_1^k)$, giving us
$$\tU_k - \tX^* = \left( \mat{I} - \lambda \frac{\gamma}{n^d}\mat{A}^\intercal \mat{A} + \lambda\mat{B}_{\tau} \right) (\tX_1^k - \tX^\ast) + \revised{\lambda \bm{\eta}_{err}}$$

By using \cref{lem:trip_hosvd} (or equivalently \cref{theorem:trip_cp}), we get that under our hypothesis, the measurement operator $\frac{1}{\sqrt{m}}\mat{A}$ also satisfies $(\delta, 3\vr)$-TensorRIP (or equivalently $(\delta, 3r)$-TensorRIP for low rank CP recovery) with probability at least $1 -  e^{-C\ln^2m}$ for some $C > 0$. Together with \cref{lem:TRIP_and_bound_B_tau}, we get that with probability at least $1 - (m^2+1)e^{-C\ln^2m}$
\begin{align*}
   &\|\tU^k - \tX^{\ast}\|_F = \left\|\left(\mat{I} - \frac{1}{m} {\mat{A}}^\intercal \mat{A} + \lambda \mat{B}_\tau\right)\algoname{Vec}(\tX_1^k - \tX^{\ast}) + \lambda\bm{\eta}_{err}\right\|_2  \quad (\text{Since }\lambda = \frac{n^d}{m\gamma} ) \\
    &\leq \|\tX_1^k - \tX^{\ast}\|_F + \left\| \frac{1}{\sqrt{m}}  {\mat{A}}^\intercal \right\|_2 \left\| \frac{1}{\sqrt{m}}\mat{A}\algoname{Vec}(\tX_1^k - \tX^{\ast})\right\|_2 + \|\lambda \mat{B}_\tau\|_2\|\tX_1^k - \tX^{\ast}\|_F + \|\lambda\bm{\eta}_{err}\|_2  \\
    &\leq \left( 1 + \delta + \left\|\frac{1}{\sqrt{m}}\mat{A}\right\|_2 \sqrt{1 + \delta}\right)\|\tX_1^k - \tX^{\ast}\|_F + \|\lambda\bm{\eta}_{err}\|_2\;. \quad (\text{Using \cref{lem:TRIP_and_bound_B_tau}})
\end{align*}
and
\[ \text{\tt{Term 2}} = \|\tU^k - \tX^{\ast}\|_F^2  \leq 2 \left( 1 + \delta + \left\|\frac{1}{\sqrt{m}}\mat{A}\right\|_{2} \sqrt{1 + \delta}\right)^2\|\tX_1^k - \tX^{\ast}\|_F^2 + 2 \|\lambda\bm{\eta}_{err}\|_2^2 ;.\]
And, for \text{\tt{Term 1}} we get that, using Lemma~\ref{lem:TRIP_innerproduct},
\begin{align*}
    &\langle \tU^k-\tX^\ast , \tX_1^{k+1} - \tX^\ast \rangle \\
    &= \left\langle \left(\mat{I} - \frac{1}{m} \mat{A}^\intercal \mat{A} \right) \algoname{Vec}(\tX_1^{k+1} - \tX^{\ast}), \algoname{Vec}(\tX_1^k - \tX^{\ast}) \right\rangle \\
    & \quad\quad\quad\quad + \left \langle \lambda \mat{B}_\tau \algoname{Vec}(\tX_1^{k+1} - \tX^{\ast}), \algoname{Vec}(\tX_1^k - \tX^{\ast}) \right\rangle + \left\langle \lambda\bm{\eta}_{err}, \algoname{Vec}(\tX_1^{k+1} - \tX^{\ast}) \right\rangle \\
    &\leq \delta\|\tX_1^{k+1} -\tX^\ast\|_F\|\tX_1^k-\tX^{\ast}\|_F +  \|\lambda\mat{B}_\tau \|_2 \|\tX_1^{k+1} - \tX^{\ast}\|_F\|\tX_1^k - \tX^{\ast}\|_F  + \|\lambda\bm{\eta}_{err}\|_2\|\tX_1^{k+1} - \tX^{\ast}\|_F. 
\end{align*}
Putting this all together with Lemma~\ref{lem:TRIP_and_bound_B_tau}, we get 
\begin{align*}\|\tX^{\ast} -\tX_1^{k+1}\|_F^2 &\leq 
 2(2\xi+\xi^2)\left(\left( 1 + \delta + \left\|\frac{1}{\sqrt{m}}\mat{A}\right\|_2 \sqrt{1 + \delta}\right)^2\|\tX_1^k - \tX^{\ast}\|_F^2 + \|\lambda\bm{\eta}_{err}\|_2^2\right)\\ & \quad \quad \quad \quad + 4\delta \|\tX_1^{k+1} -\tX^\ast\|_F\|\tX_1^k-\tX^{\ast}\|_F + 2\|\lambda\bm{\eta}_{err}\|_2 \|\tX_1^{k+1}-\tX^{\ast}\|_F.
\end{align*}
This implies that there exist $\alpha, \beta, \prod \in [0,1]$ such that
\begin{align}
\label{eq:step1}
    &(1-\alpha - \beta - \prod)\|\tX_1^{k+1} - \tX^{\ast}\|_F^2 \leq 4\delta\|\tX_1^{k+1}-\tX^{\ast}\|_F\|\tX_1^k - \tX\|_F\;, \\
    \label{eq:step2}
    &\alpha \| \tX_1^{k+1} - \tX^\ast\|_F^2 \leq {2}(2\xi + \xi^2)\left( 1 + \delta + \left\|\frac{1}{\sqrt{m}}\mat{A}\right\|_2 \sqrt{1 + \delta}\right)^2 \|\tX^k_1 - \tX^{\ast}\|_F^2\;, \\
    \label{eq:step3}
    &\beta \| \tX_1^{k+1} - \tX^{\ast}\|_F^2 \leq {2}(2\xi + \xi^2) \|\lambda\bm{\eta}_{err}\|_2^2 \;,\\
    \label{eq:step4}
    &\prod \| \tX_1^{k+1} - \tX^{\ast}\|_F^2 \leq 2\|\lambda\bm{\eta}_{err}\|_2 \|\tX_1^{k+1}-\tX^{\ast}\|_F\;. \end{align}
\revised{By combining, \eqref{eq:step1} and \eqref{eq:step4}, we get,
    \begin{equation*}
    (1-\alpha-\beta)\|\tX_1^{k+1} - \tX^{\ast}\|_F \leq 4\delta\|\tX_1^{k} - \tX \|_F + 2 \|\lambda\bm{\eta}_{err}\|_2
    \end{equation*}
    Now, by utilizing \eqref{eq:step2} and \eqref{eq:step3} by taking the square root on both sides, we have-
        \begin{align*}
\left({1 - \alpha -\beta + \sqrt{\alpha} + \sqrt{\beta}  }\right) \|\tX_1^{k+1} &- \tX^\ast\|_F \\
\leq& \  \left[4\delta + \sqrt{4\xi + 2\xi^2}\left( 1 + \delta + \left\|\frac{1}{\sqrt{m}}\mat{A}\right\|_2 \sqrt{1 + \delta}\right)\right]\|\tX^k_1 - \tX^\ast\|_{\revised{F}} \\
    &+ \left({\sqrt{4\xi+2\xi^2} + 2}\right)\|\lambda\bm{\eta}_{err}\|_2\;.
\end{align*}}
    Then,
    \begin{align*}
 \|\tX_1^{k+1} - \tX^\ast\|_F \leq& \ \frac{1}{{1 - \alpha -\beta + \sqrt{\alpha} + \sqrt{\beta}  }} \left[4\delta + \sqrt{4\xi + 2\xi^2}\left( 1 + \delta + \left\|\frac{1}{\sqrt{m}}\mat{A}\right\|_2 \sqrt{1 + \delta}\right)\right]\|\tX^k_1 - \tX^\ast\|_{\revised{F}} \\
    &+ \frac{\sqrt{4\xi+2\xi^2} + 2}{{1 - \alpha -\beta + \sqrt{\alpha} + \sqrt{\beta}  }}\|\lambda\bm{\eta}_{err}\|_2\;.
\end{align*}
Note that $\alpha, \beta \in [0,1]$ the function $(1 - \alpha - \beta + \sqrt{\beta} +\sqrt{\alpha})^{-1}$ is positive and strictly bounded by $1$, also let $\sigma := 1 + \delta + \|\frac{1}{\sqrt{m}}\mat{A}\|_{ 2} \sqrt{1 + \delta} > 1$. We know from \eqref{eq:extra_assmpn_4}, that the thresholding approximation is $\xi$-accurate where $ \xi \leq \frac{\delta^2}{5\sigma^2}$. So, we have 
\begin{align*}
\label{eq:kziht_itr}
\|\tX_1^{k+1} - \tX^\ast\|_F &\leq  ( 4\delta + \sigma\sqrt{4\xi + 2\xi^2})\|\tX^k_1 - \tX^\ast\|_2 +(\sqrt{4\xi+2\xi^2} + 2)\|\lambda\bm{\eta}_{err}\|_2 \\
&\le 5\delta \|\tX_1^k-\tX^{\ast}\|_F + (\delta + 2)\|\lambda\bm{\eta}_{err}\|_2 \\
&\leq (5\delta)^{k+1} \|\tX^{\ast}\|_F + \frac{\delta + 2}{1-5\delta}\|\lambda\bm{\eta}_{err}\|_2.
\end{align*}

Finally, we bound the $\bm{\eta}_{err}$ term as follows: from Lemma~\ref{rem:kacziht-prelim},
\begin{align*}
    \|\lambda\bm{\eta_{err}}\|_2 &\leq \left\|\frac{1}{m} \sum_{i=0}^{m-1} \bm{\eta}_{\tau(m-i)}\left( \mat{I}  - \sum_{j=m-i+1}^{m} \frac{\gamma}{n^d} \va_{\tau(j)}\va_{\tau(j)}^\intercal + \mat{B}_{\tau(i)}\right) \va_{\tau(m-i)}\right\|_2\\
    &\leq \frac{1}{m}\left\|\sum_{i=1}^{m} \eta_i \va_i \right\|_2 +\frac{\gamma}{n^d}  \sum_{i=0}^{m-1} |\bm{\eta}_{\tau(m-i)}| \underset{j}{\max} \; \|\va_{\tau(j)}\va_{\tau(j)}^\intercal\|_2 \underset{i}{\max}  \| \va_{\tau(m-i)}\|_2 \\
    & \quad\quad\quad\quad\quad\quad\quad\quad\quad\quad\quad +\frac{1}{m}\sum_{i=0}^{m-1} |\bm{\eta}_{\tau(m-i)}| \underset{i}{\max} \|\mat{B}_{\tau(i)}\|_2  \| \va_{\tau(m-i)}\|_2 \\
    & \revised{\underset{(a)}{\leq} {\frac{1}{m}}\|\mat{A}^\intercal \bm{\eta\|_2}   + {\gamma}  \|\bm{\eta}\|_1 \underset{i}{\max}\| \va_{\tau(m-i)}\|_2 +\frac{1}{m} \|\bm{\eta}\|_1 \underset{i}{\max} \|\mat{B}_{\tau(i)}\|_2 \ \underset{i}{\max}\| \va_{\tau(m-i)}\|_2} \\
    & \revised{\underset{(b)}{\leq} {\frac{1}{m}}\|\mat{A}^\intercal \bm{\eta\|_2}   +   \|\bm{\eta}\|_1 \sqrt{n^d}\left( {\gamma}  +\frac{1}{m}  \underset{i}{\max} \ \|\mat{B}_{\tau(i)}\|_2   \right)} \\
    & \underset{(c)}{\leq}  \revised{\frac{1}{m}}\|\mat{A}^\intercal \bm{\eta\|_2} + \frac{1}{m} \left(\frac{\delta}{2(\ell\ln m)^2} + \frac{\delta^2}{2n^d(\ell\ln m )^2}\right)\|\bm{\eta}\|_1.
 \end{align*}
\revised{Here, steps $(a)$ and $(b)$ both follow from the fact that $\|\va_{\tau(j)}\| = \sqrt{n^d}$ (Assumption 1) and $(c)$ follows from \cref{lem:TRIP_and_bound_B_tau} and \eqref{eq:parameters}.} 
Finally we can conclude the proof with
\[ \|\tX_1^{k+1} - \tX\|_F \leq (5\delta)^{k+1} \|\tX^\ast\|_F + \frac{1}{m}\frac{\delta + 2}{1-5\delta}\left(\left( \frac{\delta}{2(\ell\ln m)^2} + \frac{\delta^2}{2n^d(\ell\ln m )^2}\right)\|\bm{\eta}\|_1 + 
    \|\mat{A}^\intercal \bm{\eta}\|_2 \right).\]
\end{proof}

\begin{remark}[KaczTIHT for BOS measurement matrix]
\label{rem:bos} 
Note that when the measurement matrix $\mat{A}$, corresponding to the linear measurement operator $\mathcal{A}$, is sampled from a bounded orthogonal system, that is, the rows of $\mat{A}$ are orthogonal, KaczTIHT is equivalent to the standard TIHT iteration (similarly to the vector case in \cite{jeong2025linear}). Indeed, we can simplify \eqref{eq:itr_inner} to
\begin{equation*}
\label{eqn:iteratios_kz}
\algoname{Vec}\left(\tX^k_{m+1} - \tX^{\ast}\right) = \left( \mat{I} - \gamma \sum_{i=1}^{m} \frac{\va_{\tau(i)} \va_{\tau(i)}^\intercal}{n^d} \right) \algoname{Vec}(\tX_1^k - \tX^{\ast}) + \sum_{i=0}^{m-1}\bm{\eta}_{\tau(m-i)} \gamma \frac{\va_{\tau(m-i)}}{n^d} \;.
\end{equation*}
For the $\|\va_j\|^2 = n^d$ for all $j = 1,\dots m$ the step-sizes chosen to be $\gamma = n^d/m$ and $\lambda = 1$, 
$$ \gamma \sum_{i=1}^{m} \frac{\va_{\tau(i)} \va_{\tau(i)}^\intercal}{\|\va_{\tau(i)}\|_2^2} = \frac{n^d}{m} \sum_{i=1}^{m} \frac{\va_{\tau(i)} \va_{\tau(i)}^\intercal}{n^d} = \frac{1}{m}\mat{A}^\intercal \mat{A}\; .$$
In this case, the iterates of KaczTIHT satisfy
\begin{gather*}
\tX_{m+1}^k = \tX_1^k + \frac{1}{m} \mat{A}^\intercal \mat{A} \algoname{Vec}(\tX^{\ast} - \tX_1^k) + \frac{1}{m} \mat{A}^\intercal \bm{\eta}  = \tX_1^k +  \frac{1}{m} \mat{A}^\intercal(\vb- \mat{A}\tX_1^k)),\\
 \algoname{Vec}(\tX_1^{k+1}) = T_\mathbf{r}( \algoname{Vec}(\tX^k_{m+1})) = T_\mathbf{r}\left(\tX_1^k + \frac{1}{m} \mat{A}^\intercal(\vb - \mat{A} \algoname{Vec}(\tX_1^k))\right).\end{gather*}
This coincides with the iterations of TIHT. 
\end{remark}

\section{Experiments}
\label{section:experiments}
\revised{In this section, we provide  numerical experiments that demonstrate the performance of TrimTIHT and KaczTIHT methods on a variety of synthetic datasets and real-world video data. Our main goals are to \revision{empirically} investigate suitable  parameter choices for the methods   and \revision{to demonstrate} how the proposed methods improve the performance of the standard TIHT method.} The code can be found at \url{https://github.com/shambhavi-suri/Low-Rank-Tensor-Recovery}. All the experiments were run on Adroit, a Beowulf cluster, using 1 core with \revised{7}0 GB RAM.

\textbf{Choosing Parameters}: For KaczTIHT, we set $\lambda = 1$ and $\gamma = \frac{n^d}{m}$. While theory mandates a small choice of $\gamma$, we observe that empirically the performance did not depend on the values of these parameters as long as the product $\gamma \lambda = \frac{n^d}{m}$. For both TIHT and TrimTIHT, we consider \revision{a} step size \revision{of} $\mu = 1$. This ensures a fair comparison and enables us to study the effect of just trimming on the convergence rate. Further, \revised{for the TrimTIHT experiments with $15\times 15 \times 15$ tensors}, we  select $m_{trim}$ by optimizing over a set of 5 values that range from 5 to 80, over 5 random sample runs. \revised{For the experiments, with larger tensors, we choose the parameters by running around 2 sample runs with less than 3 parameters.}

\subsection{Synthetic Data} 
We consider the recovery of low HOSVD and CP rank tensors which have been compressed using two different classes of measurement ensembles: entry-wise independent Gaussian $\revised{N}(0,1)$ measurements and face-splitting product of entry-wise independent Gaussian $\revised{N}(0,1)$ matrices. Low HOSVD rank $\vr = (r,r,\dots,r)$ tensors are generated via a modewise product of a randomly sampled core tensor $\tC$ and orthogonal factor matrices $\mat{U}_1, \mat{U}_2\ \text{and} \ \mat{U_3}$. More precisely, $\tC$ is generated by sampling entry-wise from a $\revised{N{(0.1,1)}}$ distribution and $\mat{U}_i$'s correspond to the top $r$ left singular vectors corresponding to a random matrix in $\R^{15 \times 15}$ whose entries are sampled i.i.d from $\revised{N{(0.1,1)}}$. \revised{Similarly, a random CP rank $r$ tensor, $\tX = \sum_{j=1}^{r} \vx_{1j} \out \vx_{2j} \dots \out \vx_{dj}$ is generated by randomly sampling each of the factors $\vx_{ij}$ in its decomposition. Here, each coordinate of $\vx_{ij}$ is sampled i.i.d from a random Gaussian ${N}(0.1,1)$ distribution.}

\subsubsection{Gaussian Measurements}
\label{subsection:gaussian_numerica[} 
Gaussian $N(0,1)$ matrices with independent entries
have good RIP properties, and the standard low-rank recovery methods, such as TIHT, are expected to work. Yet, we demonstrate the advantage of the proposed adaptive methods, \revised{TrimTIHT and KaczTIHT,} that is especially prominent when the rank gets higher.

\begin{figure}[!ht]
\begin{center} \begin{subfigure}{0.32\textwidth}
     \includegraphics[width=\textwidth]{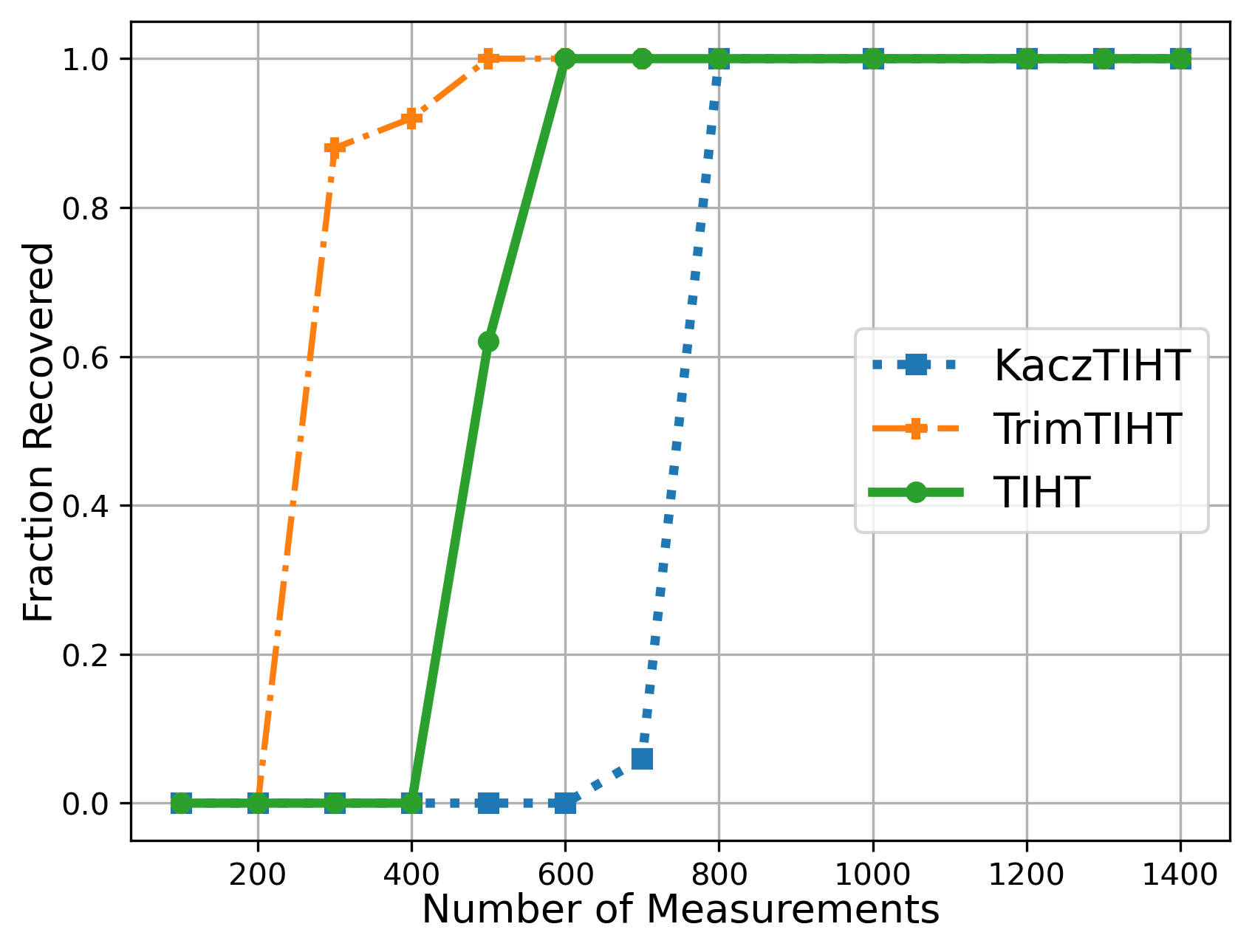}
     \label{fig:recovery rank 2}
     \vspace*{-.3cm}
     \caption{HOSVD Rank$(2,2,2)$}
 \end{subfigure}
  \hfill
\begin{subfigure}{0.32\textwidth}
\includegraphics[width=\textwidth]{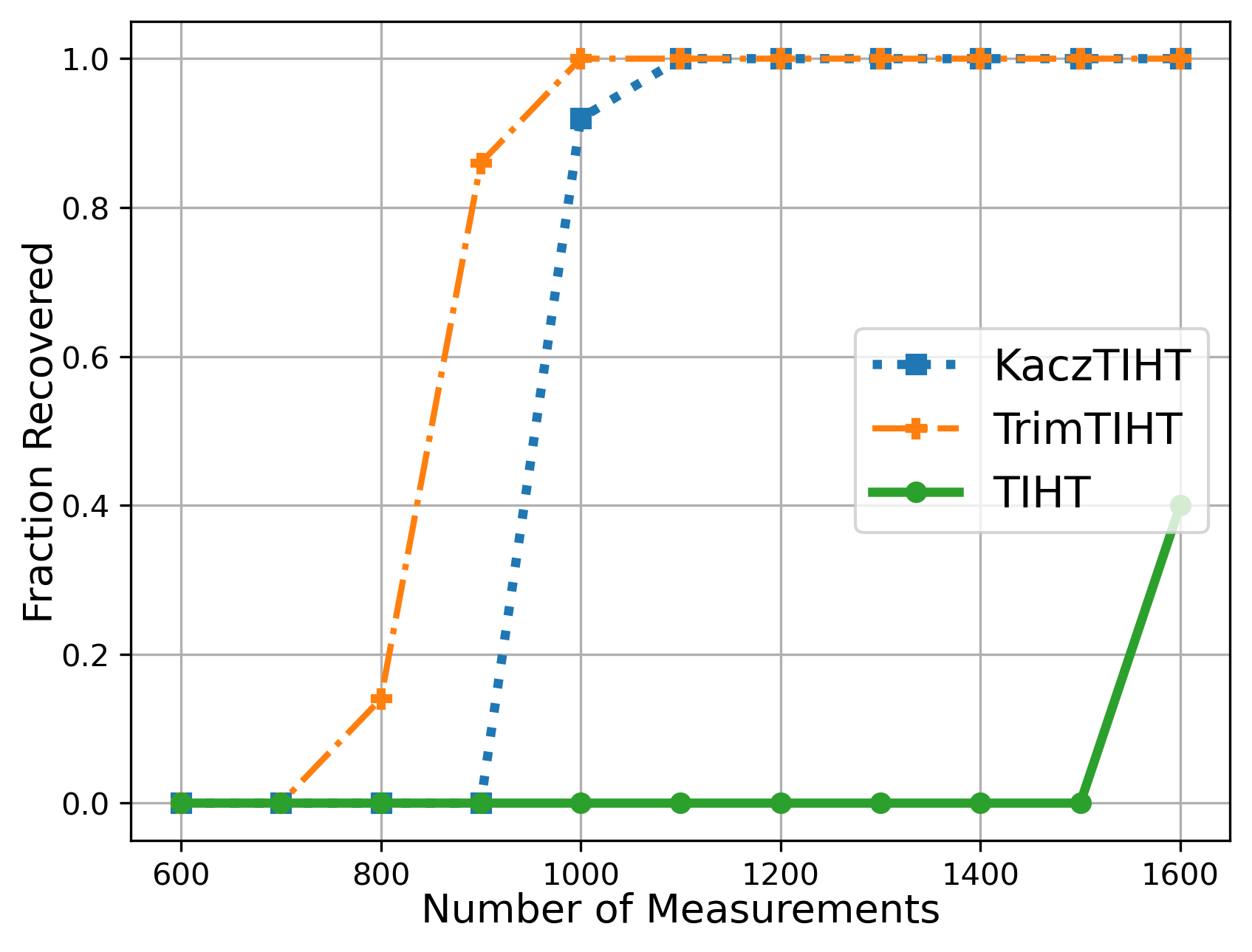}    \label{fig:recovery_rank_r}
     \vspace*{-.3cm}
     \caption{HOSVD Rank $(5,5,5)$}
 \end{subfigure}
\hfill
  \begin{subfigure}{0.32\textwidth}  \includegraphics[width=\textwidth]{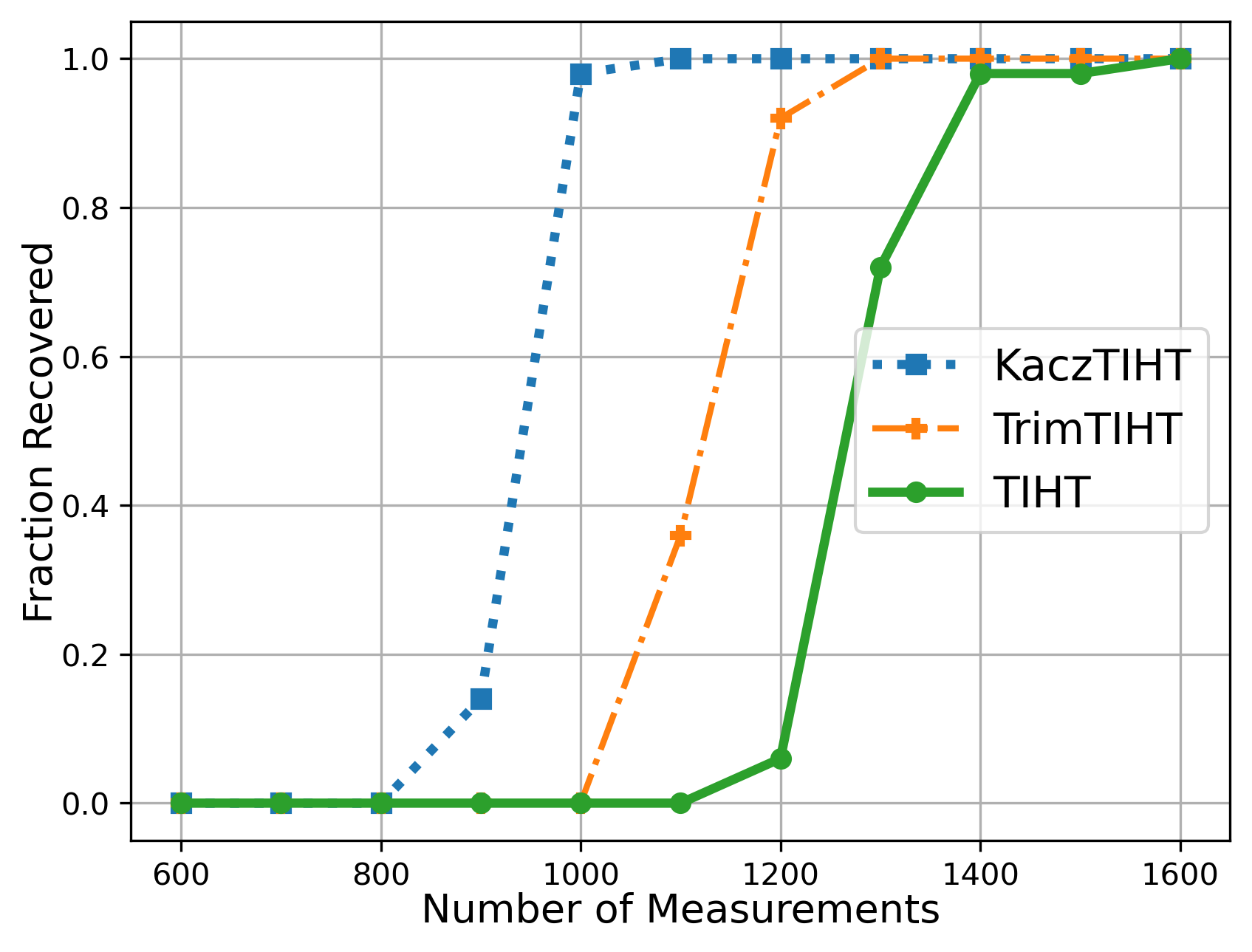} \label{fig:recovery_CP_rank_5}
     \vspace*{-.3cm}
     \caption{CP Rank $5$}
 \end{subfigure}
 \caption{\revised{Recovery \revised{of a $15 \times 15 \times 15$ tensor} from Gaussian measurements: Fraction of $50$ artificially generated random tensors of a certain rank, recovered successfully using TIHT, TrimTIHT\revision{,} and KaczTIHT at different compression levels. The proposed methods require \revision{fewer} measurements for successful recovery, especially for the higher ranks. 
 }}
 \label{fig:Gaussian Recovery}
 \end{center}
\end{figure}
First, in  Figure \ref{fig:Gaussian Recovery}(\texttt{A} and \texttt{B}), we consider the recovery of $50$ HOSVD rank $(2,2,2) $ and $(5,5,5)$ tensors in $\R^{15 \times 15 \times 15}$ which have been compressed to different dimensions. We say that a tensor has been recovered, if the relative error is less than $10^{-4}$ after 200 iterations. Relative error here corresponds to $\|\tX - \tY\|_F/{\|\tX\|_F}$ where $\tY$ is an estimate of $\tX$. We then record the fraction recovered at each compression level. \revision{The} TrimTIHT method performs the best for both these ranks, and KaczTIHT outperforms vanilla TIHT at a higher rank, when TIHT recovery essentially fails.
Similarly, the same experiment was repeated with CP rank $5$ tensors (Figure \ref{fig:Gaussian Recovery}(\texttt{C})). \revision{Here,} we observe that both KaczTIHT and TrimTIHT outperform TIHT, with the former being more stable.

Subsequently, in Figure \ref{fig:convergence_gaussian}, we compare the convergence of the considered IHT-based recovery methods. 
We \revision{observe} that for both HOSVD and CP rank, KaczTIHT and TrimTIHT \revision{outperform} TIHT. \revised{For CP low rank,} KaczTIHT converges faster, especially for higher rank tensors, and TIHT starts diverging.
\begin{figure}
\label{figure:fixed_rank_gaussian}
\begin{center}
 \begin{subfigure}{0.45\textwidth}     \includegraphics[width=\textwidth]{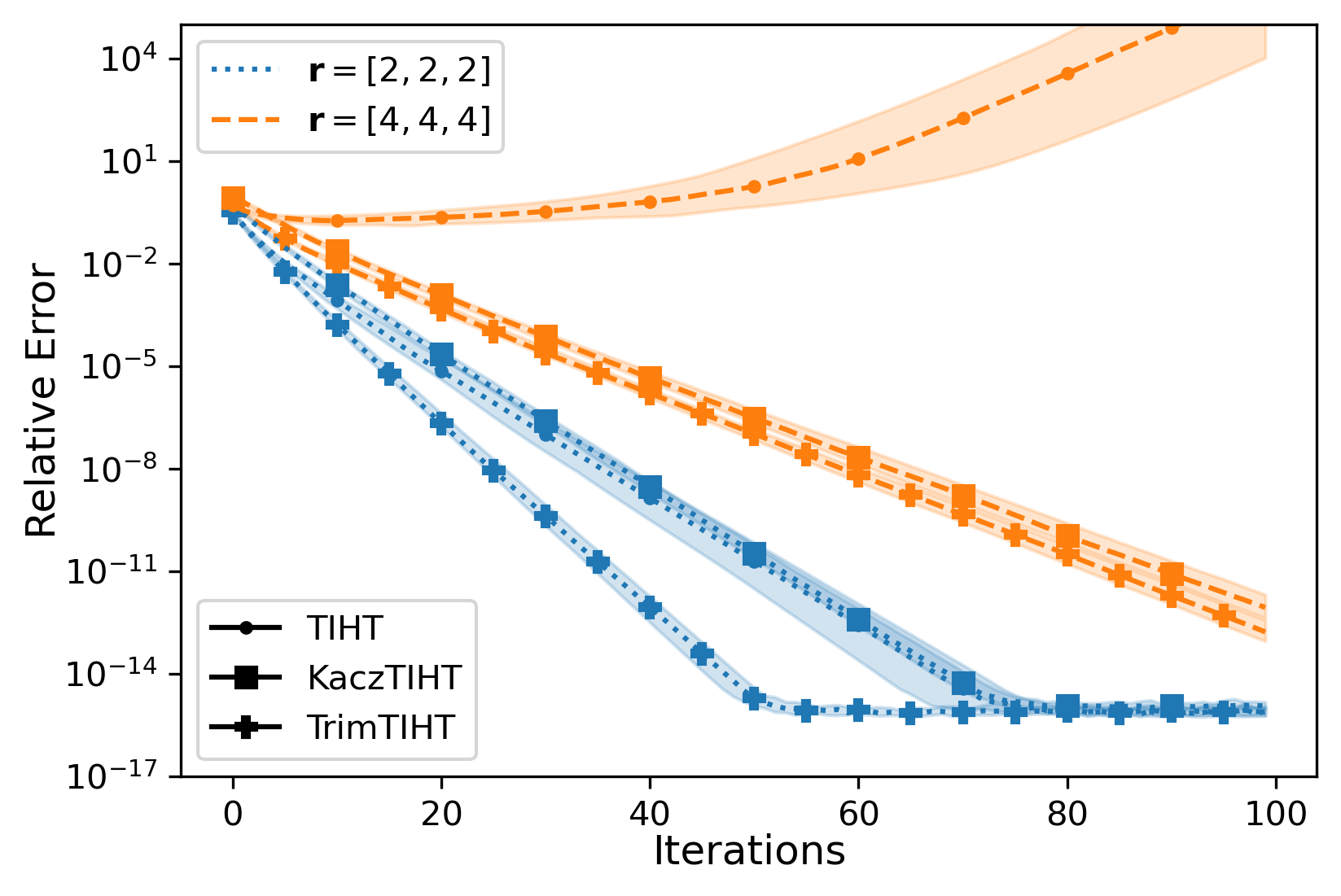}
     \label{fig:Convergence-Gaussian-HOSVD}
     \vspace*{-.3cm}
     \caption{HOSVD rank ${\bf r}$, $m = 1000$ measurements}
 \end{subfigure}
  \hfill
\begin{subfigure}{0.45\textwidth}   \includegraphics[width=\textwidth]{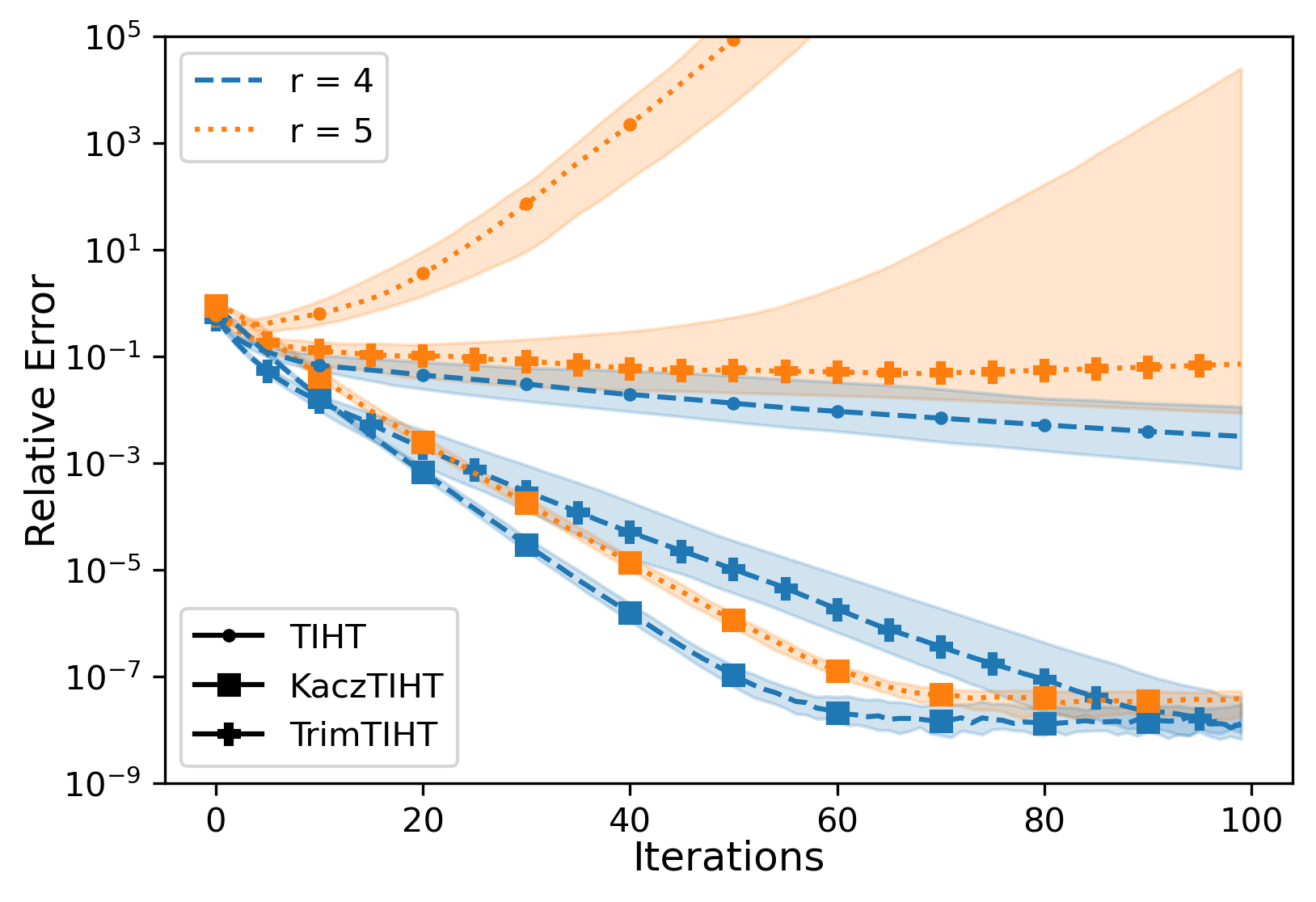}
     \label{fig:convergence-Gaussian-CP}
     \vspace*{-.3cm}
     \caption{CP rank $r$, $m = 1000$ measurements}
 \end{subfigure}
 \end{center}

  \caption{Recovery \revised{of a $15 \times 15 \times 15$ tensor} from  Gaussian measurements: Relative error dynamic during the recovery with TIHT, KaczTIHT\revision{,} and TrimTIHT of  the tensors of different low rank. 
  The lines correspond to the median over $40$ sample runs, and the band represents the inter-quartile range.}
    \label{fig:convergence_gaussian}
\end{figure}

\subsubsection{Face-Splitting Measurements}
\revised{Next, we consider the compression of random low-rank tensors using face-splitting product of entry-wise Gaussian $N(0,1)$ matrices. On the same size $15 \times 15 \times 15$, such measurements require $75$ times less memory than their independent Gaussian counterparts considered in Section~\ref{subsection:gaussian_numerica[}.} However, face-splitting measurements do not have good RIP properties (recall Proposition~\ref{prop:db_friendly_trip}), and empirically we see that the TIHT method does not recover the tensors from any compression ratio we considered (up to $40\%$ of data). However, we show that the proposed methods enable the recovery.

In  Figures \ref{img:recovery_face-split} (\texttt{A} and \texttt{B}), we consider the recovery of $50$ HOSVD rank $(2,2,2) $ and $(5,5,5)$ tensors and CP rank $5$ tensors in $\R^{15\times15\times 15}$.  
 Again, we say that the tensor has been recovered if the relative error is less than $10^{-4}$ after 200 iterations and record the fraction recovered at each compression level. 
 Here, TIHT fails for all considered cases, whereas TrimTIHT and KaczTIHT are able to recover tensors from highly compressed measurements. Then, in Figure \ref{img:face_split_convergence}, we study the convergence of these methods \revision{for a fixed number of} measurements. Again, we see that for both HOSVD and CP rank, KaczTIHT and TrimTIHT converge faster than TIHT. Additionally, KaczTIHT tends to converge faster than TrimTIHT \revised{in terms of the number of iterations (Figure \ref{img:face_split_convergence}) but at least for HOSVD rank it needs more measurements to converge (Figures \ref{img:recovery_face-split}). }
 \begin{figure}[!ht]
\begin{center}
 \begin{subfigure}{0.32\textwidth}
     \includegraphics[width=\textwidth]{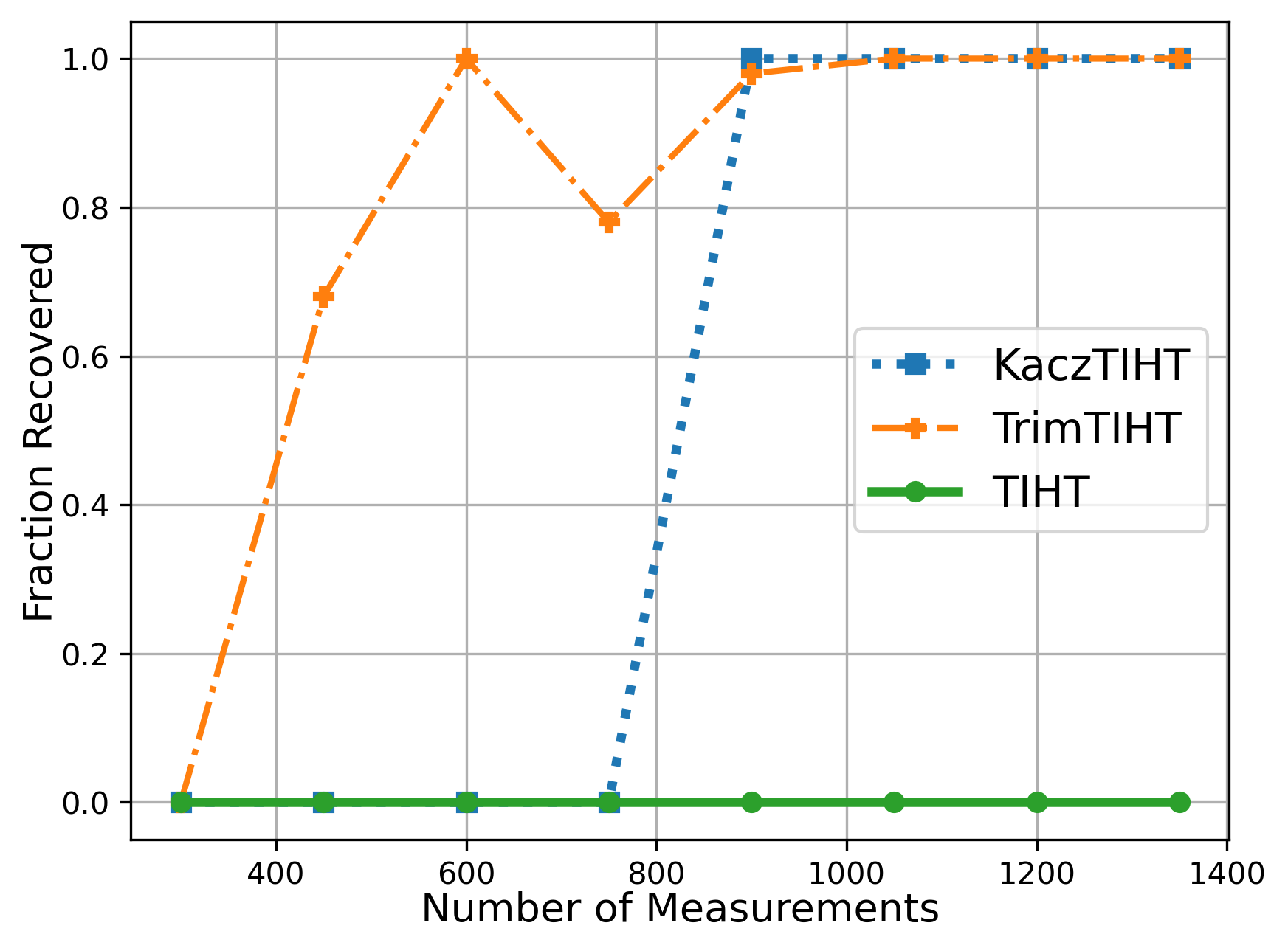}
\vspace*{-.3cm}
     \caption{{HOSVD Rank (2,2,2)}}
 \end{subfigure}
  \hspace{0.2em}
\begin{subfigure}{0.32\textwidth}
     \includegraphics[width=\textwidth]{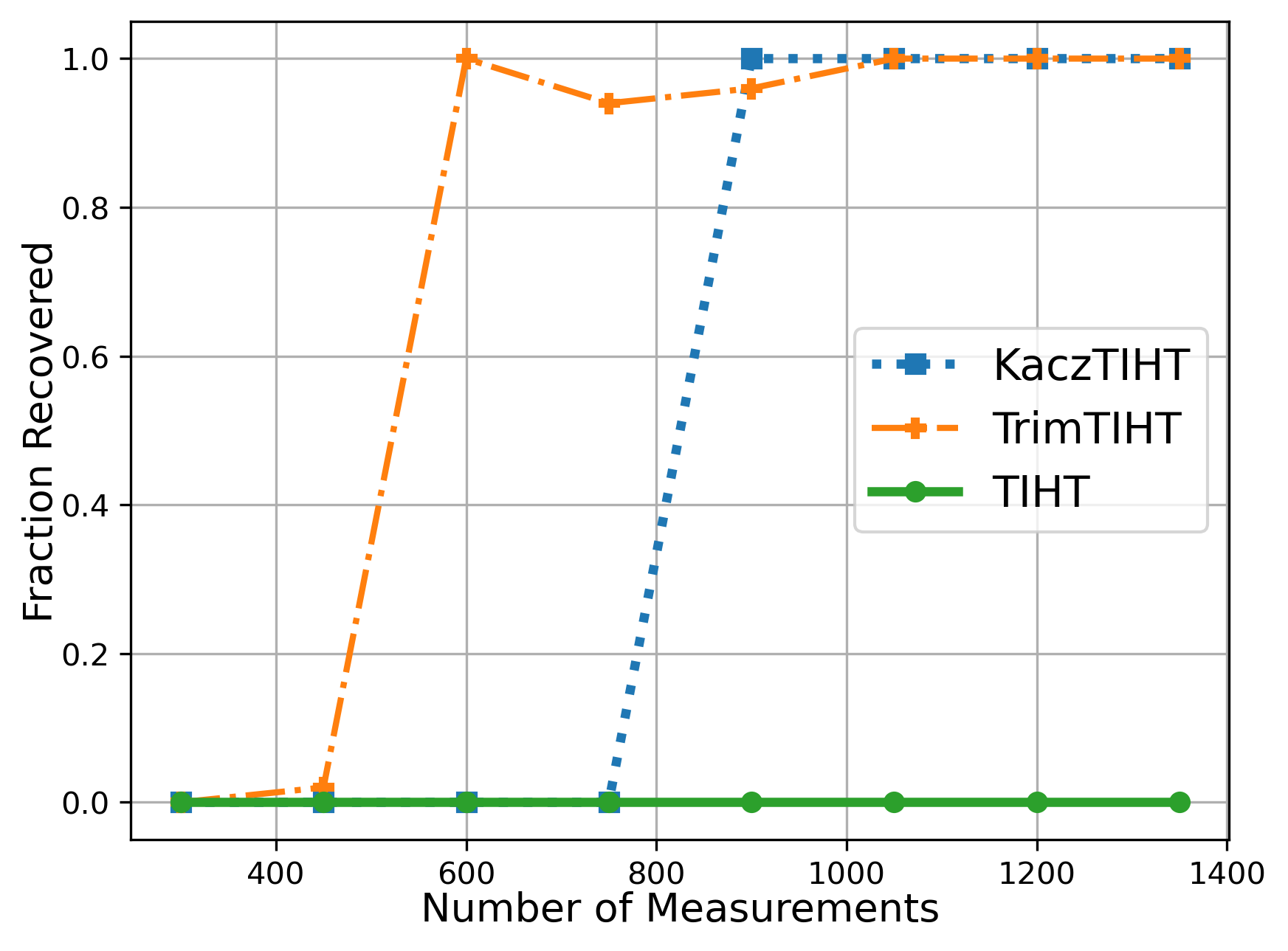}
     \vspace*{-.3cm}
     \caption{{HOSVD Rank (4,4,4)}}
 \end{subfigure}
   \hspace{0.2em}
\begin{subfigure}{0.32\textwidth}
     \includegraphics[width=\textwidth]{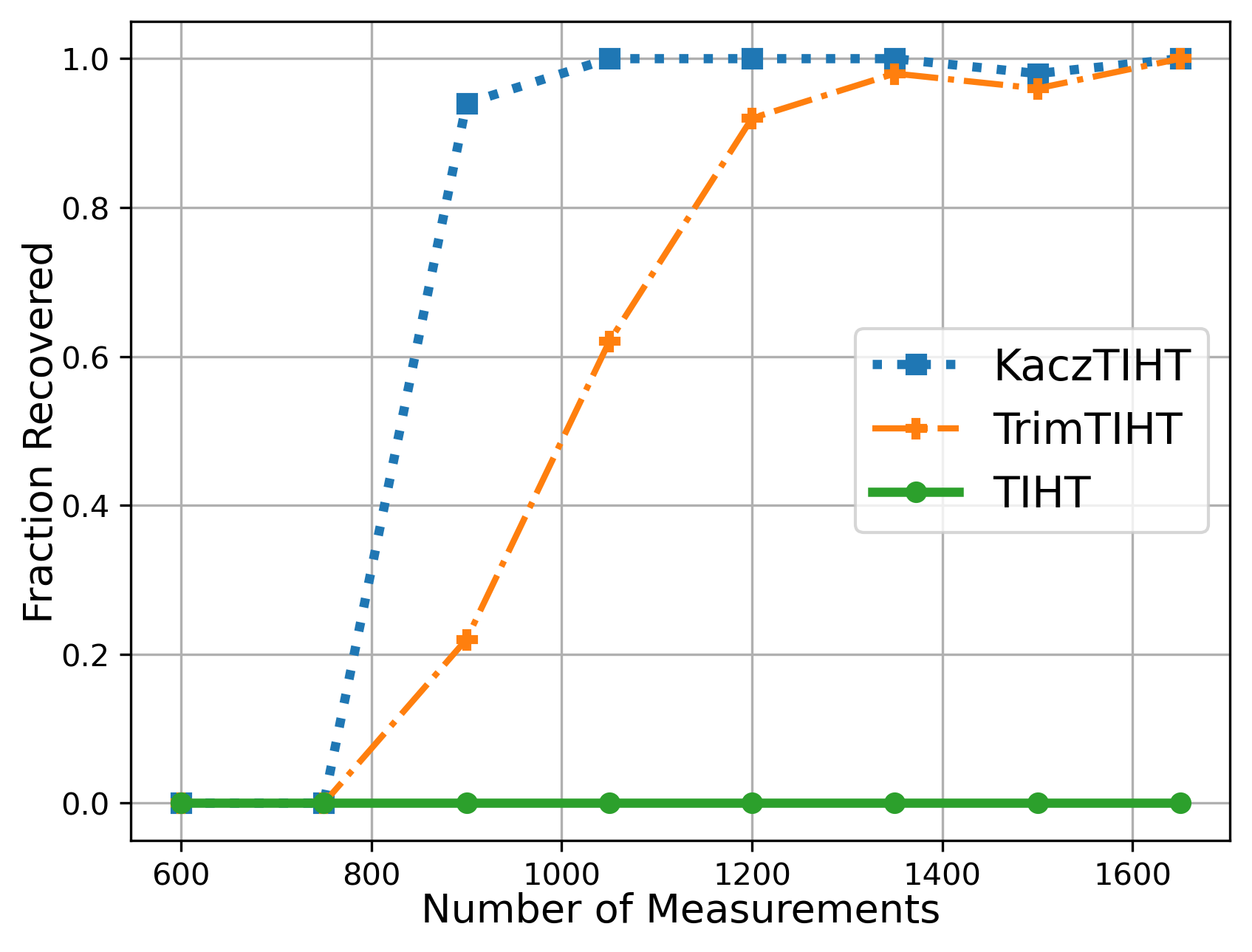}
     \vspace*{-.3cm}
     \caption{{CP Rank 3}}
 \end{subfigure}
 \caption{\revised{Recovery \revised{of a $15 \times 15 \times 15$ tensor} from face-splitting measurements: Fraction of $50$ artificially generated random tensors of a certain rank, recovered successfully using TIHT, TrimTIHT, and KaczTIHT from different levels of compression.
 }}
  \label{img:recovery_face-split}
 \end{center}
\end{figure}

\begin{figure}[!ht]
\begin{center}
 \begin{subfigure}{0.45\textwidth}
     \includegraphics[width=\textwidth]{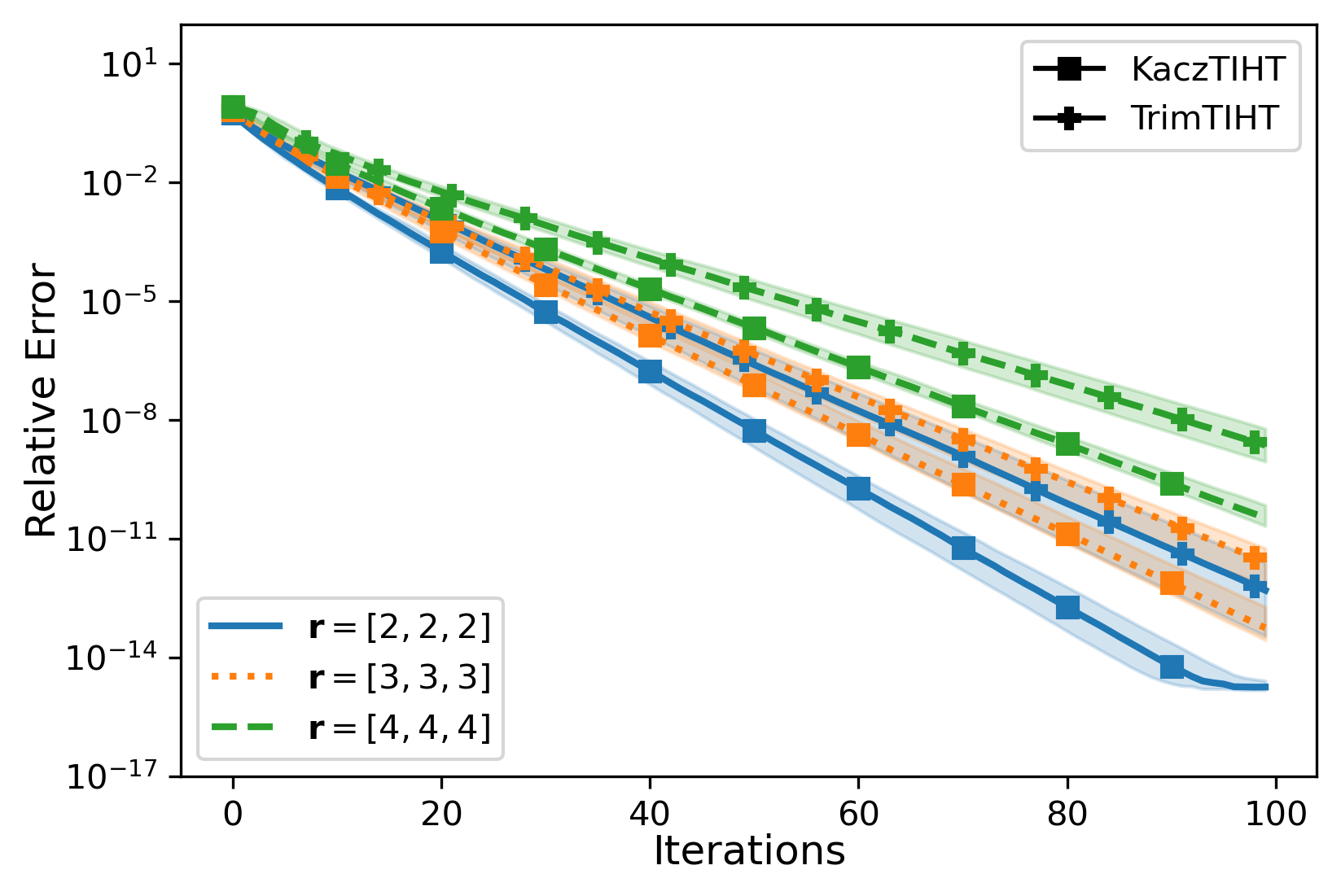}
     \label{fig:convergence_HOSVD}
     \caption{HOSVD rank ${\bf r}$, $m = 1\revised{0}50$ measurements}
 \end{subfigure}
  \hfill
\begin{subfigure}{0.45\textwidth}
     \includegraphics[width=\textwidth]{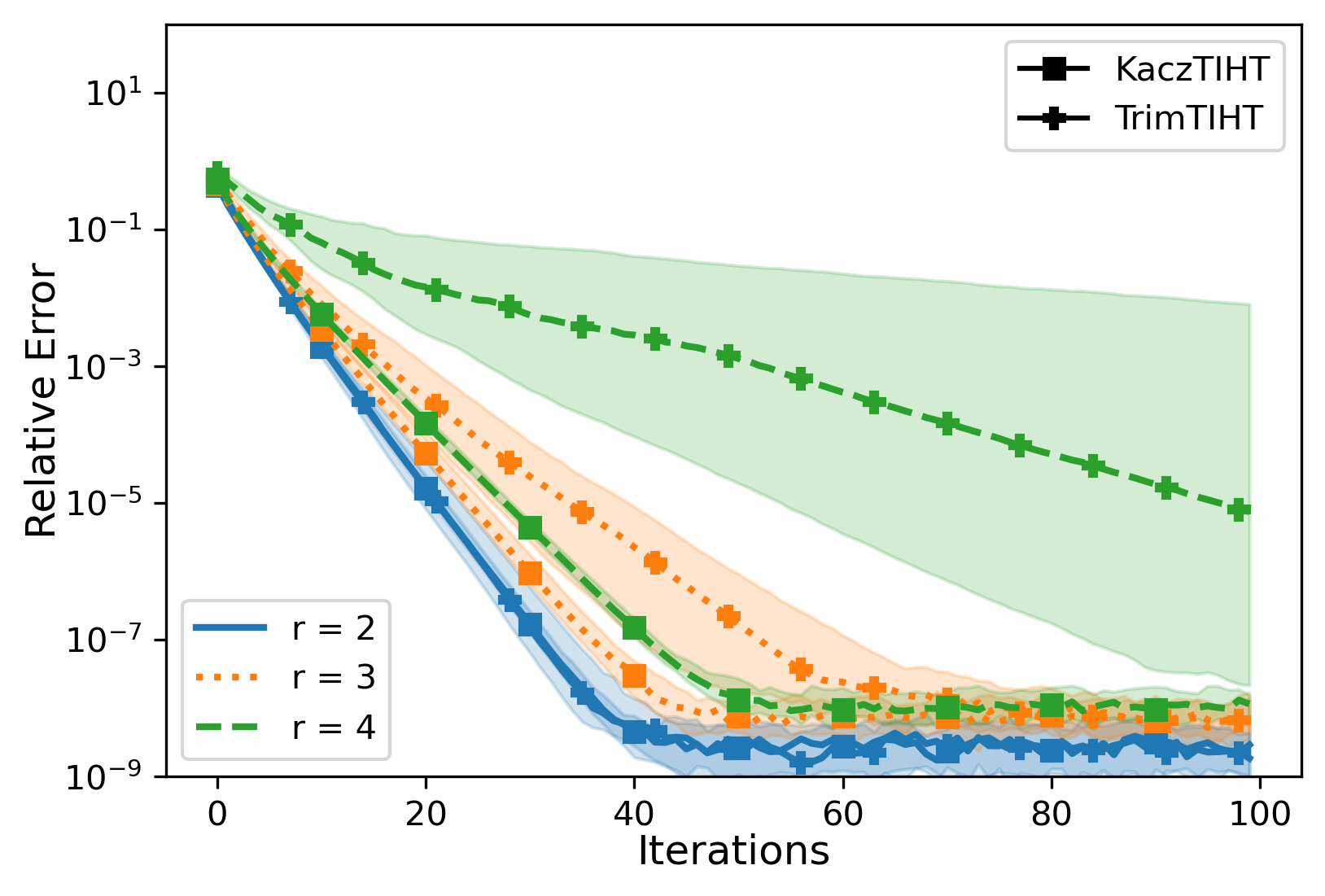}
     \label{fig:convergence_CP}
     \caption{CP rank $r$, $m = 1500$ measurements}

 \end{subfigure}
  \caption{\revised{Recovery \revised{of a $15 \times 15 \times 15$ tensor} from face-splitting measurements: Relative error dynamic during the recovery with KaczTIHT and TrimTIHT of  the tensors of different low rank. KaczTIHT tends to converge in fewer iterations, especially for higher rank tensors.
  The lines correspond to the median over $40$ sample runs, and the band represents the interquartile range.}}
       \label{img:face_split_convergence}
 \end{center}
\end{figure}

\revised{Then, we include the experiments on larger-scale tensors. Specifically, in Figure \ref{img:face_split_convergence_large} we consider the relative error dynamics of TrimTIHT at different level\revision{s} of compression, $\kappa = \frac{m}{n_1n_2\dots n_d}$, for low rank HOSVD tensors of dimensions $50\times 50 \times 50$ and $20 \times 20 \times 20 \times 20$. Here, we note that we were able to achieve recovery to a level of $10^{-3}$ from compression to the level of $3\%$ for the $3$-d tensor and $4\%$ for the $4$-d tensor within 150 iterations. Both KaczTIHT and TIHT algorithms diverged for this setup.}

\begin{figure}[!ht]
\begin{center}
 \begin{subfigure}{0.48\textwidth}
     \includegraphics[width=\textwidth]{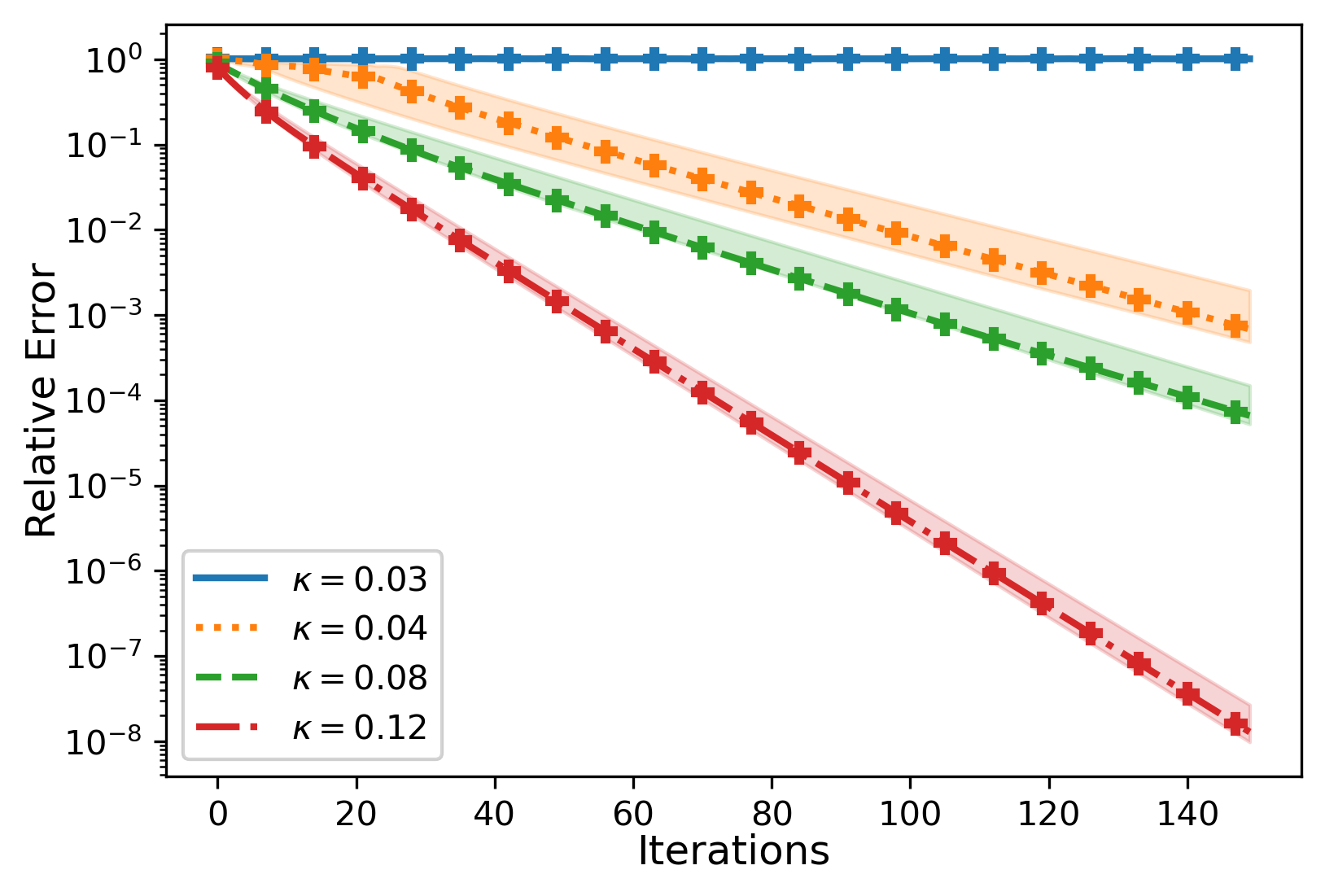}
     \label{fig:convergence_HOSVD_large}
     \caption{HOSVD rank $(2,2,2)$, $50 \times 50 \times 50$ tensor}
 \end{subfigure}
  \hfill
\begin{subfigure}{0.48\textwidth}
     \includegraphics[width=\textwidth]{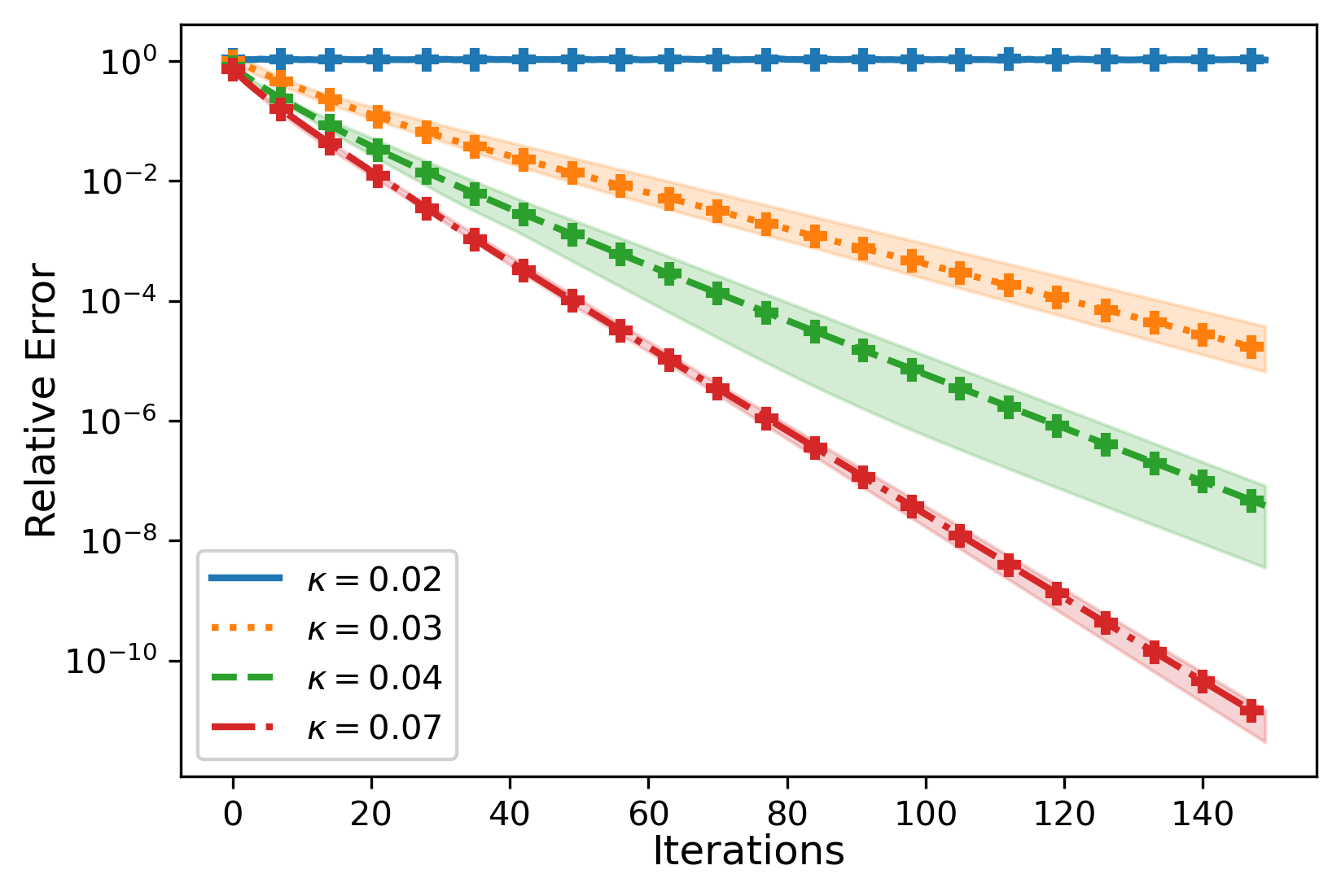}
     \label{fig:convergence_CP_large}
     \caption{HOSVD rank $(2,2,2,2)$, $20 \times 20 \times 20 \times 20$ tensor}

 \end{subfigure}
  \caption{\revised{Recovery from face-splitting product of Gaussian measurements: Relative error dynamic during the recovery with TrimTIHT of low rank HOSVD tensors at different compression levels $\kappa = \frac{m}{n^d}$. The lines correspond to the median over $10$ sample runs, and the band represents the $25\%$ interquartile range. Both KaczTIHT and TIHT diverge at this level of compression.}}
       \label{img:face_split_convergence_large}
 \end{center}
\end{figure}

\subsection{Video Data}
In this experiment, we apply our proposed methods to compress and recover a real-world video dataset. We use the video of a flickering candle flame from the Dynamic Texture Toolbox (\url{http://www.vision.jhu.edu/code/}).  For more tractable computations, we consider only $10$ frames cropped to $40 \times 40$ pixels around the center of the image. We compress the video tensor using i.i.d. Gaussian and face-splitting product of i.i.d. Gaussian measurement ensembles of dimensions \revised{$\R^{4000 \times 16000}$ which corresponds to $25\%$ compression. The original tensor is approximately of HOSVD rank $(6,7,2)$, with a relative error of low-rank tensor fitting via HOOI iteration (implemented in the Tensorly package) of $0.027$.} Our goal is to recover this structure from data-oblivious measurements, which is a harder task than the HOSVD components recovery tailored for a particular dataset. Yet, with \revision{the} TrimTIHT and KaczTIHT methods, we approach a compatible $10^{-2}$ relative error for the recovery from the data-oblivious measurements, as shown in Figure~\ref{img:candle_video}.

Specifically, in Figure~\ref{img:candle_video}, we plot the relative error with iterations of KaczTIHT and TrimTIHT \revised{(Here, we chose $m_{trim}= 650$ for face-splitting measurements and $m_{trim}= 500$ for Gaussian measurements)} for recovery from Gaussian measurements and from face-splitting measurements. Note that the \emph{face-splitting measurements take \revised{$175$} times less memory than unstructured i.i.d. measurements} \revised{($4000\times(40+40+10) \text{ vs. } 4000\times(40\times40\times 10)$ bits)} to store, while allowing for essentially the same recovery properties. 

In conclusion, both proposed methods are able to recover the original video to almost the best low-rank approximation error. \revision{However}, TIHT immediately diverges for both types of measurements and is \revision{therefore} not shown in the figure. KaczTIHT converges faster than TrimTIHT, and this can be attributed to the relatively high rank of the problem.
\begin{figure}[!ht]
\begin{center}
 \begin{subfigure}{0.37\textwidth}

     \raisebox{0.4cm}{\includegraphics[width=\textwidth,keepaspectratio]{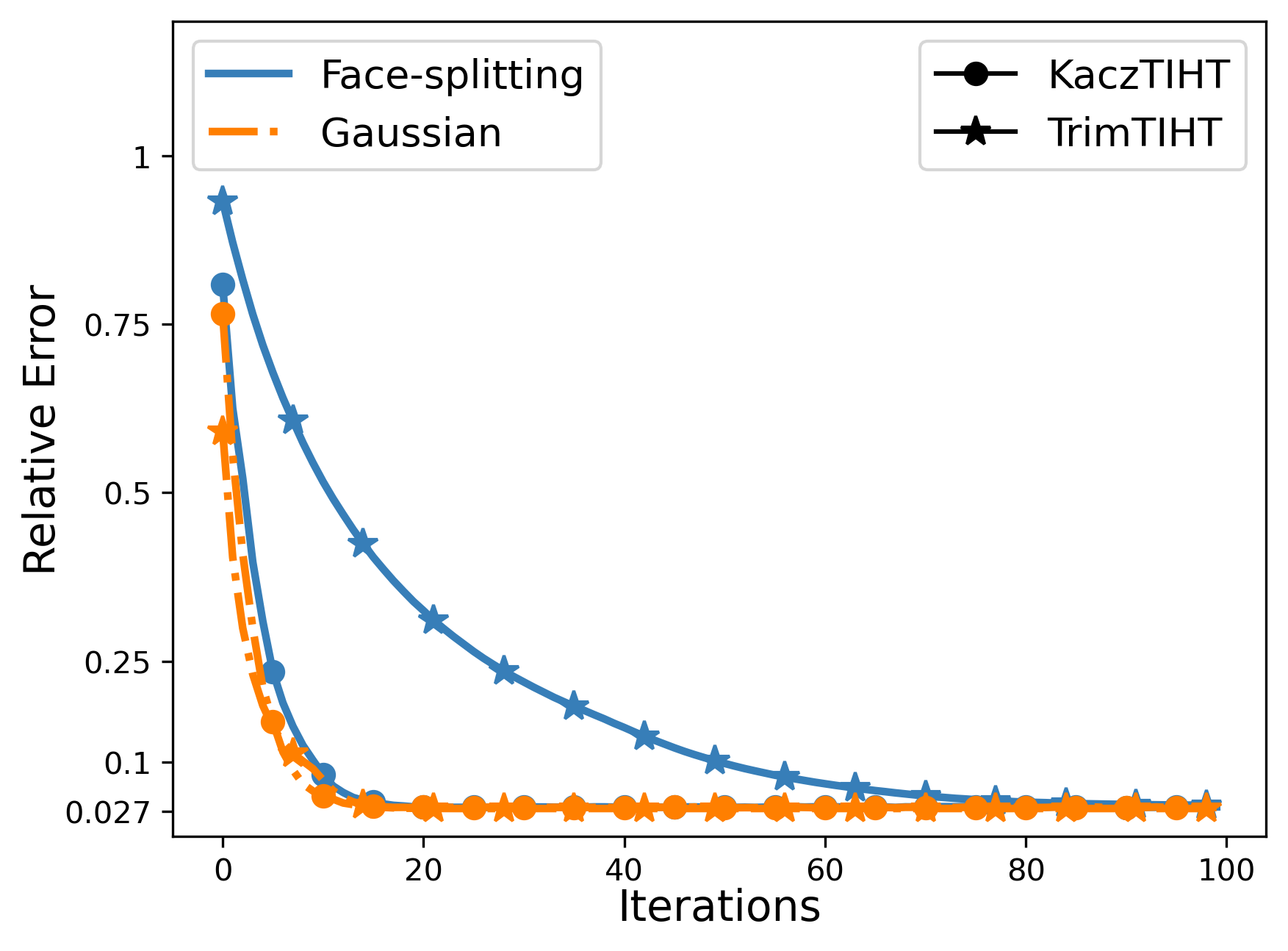}}
     \label{fig:candle_converge}
 \end{subfigure}
  \hfill
\begin{subfigure}{0.62\textwidth}    
\includegraphics[width=\textwidth,keepaspectratio]{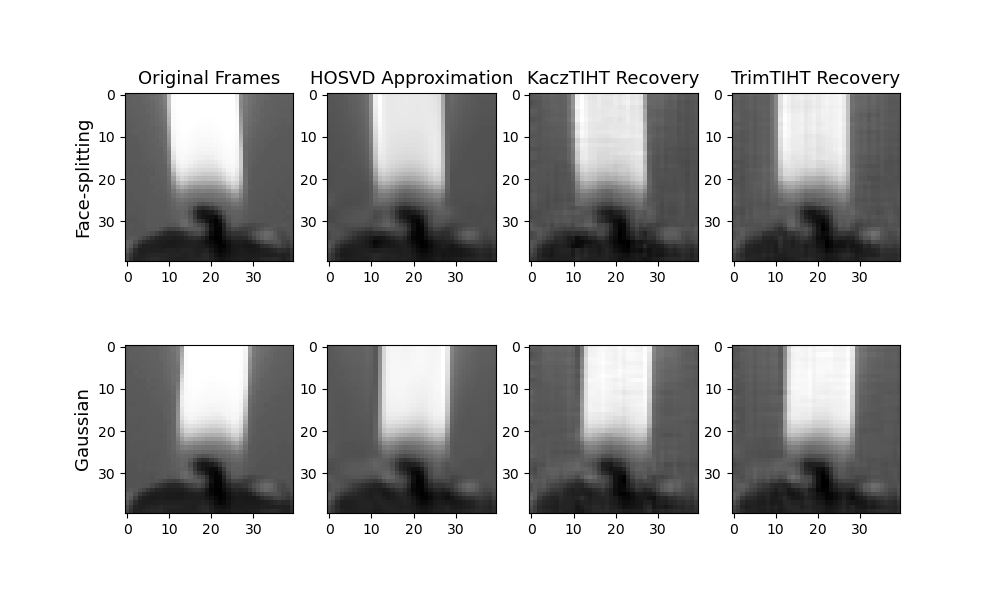}
     \label{fig:frames_rec}
 \end{subfigure}
 \end{center}
 \caption{\revised{(Left) Relative error of recovered Candle video dataset using KaczTIHT and TrimTIHT from Gaussian and face-splitting measurements. TIHT immediately diverges on the same setup. We aim to recover the $(6,7,2)$-rank approximation of the tensor from a compression to $25\%$ of its size. The achieved relative error is compatible with $0.027$ HOSVD fitting error, and KaczTIHT converges effectively as quickly on memory-efficient measurements (blue) as on independent Gaussian measurements (orange) while requiring $175$ times less memory. (Right) The first frame of the original video and its low rank approximation and recovered versions are presented in the first row, and the last frame and its low rank approximation and recovered images are depicted in the bottom row.}}
 \label{img:candle_video}
\end{figure}

\section{Conclusions and Future Directions}
\label{sec:conclusions}
In this paper, we \revised{develop methods for low-rank tensor recovery from the structured linear measurements}. We achieve this by proposing two efficient iterative hard thresholding-based methods -- TrimTIHT and KaczTIHT -- for low-rank tensor recovery, including HOSVD and CP ranks. We show that a class of practical, structured, and memory-efficient measurement operators, based on the face-splitting tensor product, fails to satisfy TensorRIP, a key property required to guarantee convergence for vanilla TIHT. Then, we analyze how local adaptive trimming of the measurements (the core idea of the proposed TrimTIHT method) can help in restoring TensorRIP-like guarantees for such database-friendly measurement operators. We also provide theoretical convergence guarantees for the proposed recovery methods and a suite of numerical experiments that showcase their efficacy \revised{over TIHT}, which is especially prominent in situations where we lack good TensorRIP properties.

There are multiple further questions stemming from this work. This includes providing theoretical evidence for the superior performance of the KaczTIHT method (this question seems to be unknown and interesting even for the sparse recovery counterpart of the method, see \cite{jeong2025linear}). Adaptive trimming in Kaczmarz methods is also known to improve robustness, particularly when \revision{handling corrupted measurements} \cite{haddock2022quantile,cheng2022block}. It would be natural to consider the effect of trimmed methods \revision{on} the robust low-rank recovery problem. \revised{Further, the theoretical guarantees provided for adaptive trimming hinge on the measurement matrix having independent rows. However, some measurement matrices with computationally desirable properties, such as mode-wise measurements, lack such independence guarantees. An interesting line of investigation would be to build on the proposed algorithms to facilitate efficient low-rank memory from such measurement operators.} Additional next directions include utilizing adaptive trimming within other low-rank tensor processing methods, many of which also have TensorRIP requirements, making them ineffective for recovery from database-friendly measurements. 

\section{Acknowledgments}
This work was partially supported by NSF DMS-2309685 and NSF DMS-2108479.
\printbibliography[
title={Bibliography}
]

\appendix

\end{document}